\numberwithin{equation}{section}
\providecommand{\customgenericname}{}
\newcommand{\newcustomtheorem}[2]{\newenvironment{#1}[1]
  {\renewcommand\customgenericname{#2}
   \renewcommand\theinnercustomgeneric{##1}\innercustomgeneric}{\endinnercustomgeneric}}
\newcommand{\newcustomlemma}[2]{\newenvironment{#1}[1]
  {\renewcommand\customgenericname{#2}
   \renewcommand\theinnercustomgeneric{##1} \innercustomgeneric}{\endinnercustomgeneric}}
\theoremstyle{plain}
\newtheorem{theorem}{Theorem}[section]
\newtheorem{lemma}[theorem]{Lemma}
\newtheorem{corollary}[theorem]{Corollary}
\newtheorem{proposition}[theorem]{Proposition}
\theoremstyle{remark}
\newtheorem{remark}{Remark}
\theoremstyle{definition}
\newtheorem{defn}{Definition}
\newcommand{\q}{\quad}
\newcommand{\qq}{\qquad}
\newcommand{\bbr}{\mathbb{R}}
\newcommand{\bbrn}{\mathbb R^n}
\newcommand{\bbc}{\mathbb{C}}
\newcommand{\bbn}{\mathbb{N}}
\newcommand{\xxxi}{\vec{\boldsymbol{\xi}\,}}
\newcommand{\xxx}{\vec{\boldsymbol{x}}}
\newcommand{\ppp}{\vec{\boldsymbol{p}}}
\newcommand{\uuu}{\vec{\boldsymbol{u}}}
\def\000{\vec{\boldsymbol{0}}}
\def\ii{{\mathrm{i}}}
\newcommand{\supp}{\mathrm{supp}}
\newcommand{\wh}{\widehat}
\newcounter{question}
\newcommand{\bpf}{\begin{proof}}
\newcommand{\epf}{\end{proof}}
\begin{document}

\author{Bae Jun Park}
\address{B. Park, Department of Mathematics, Sungkyunkwan University, Suwon 16419, Republic of Korea}
\email{bpark43@skku.edu}

\author{Naohito Tomita}
\address{N. Tomita, Department of Mathematics, Graduate School of Science, Osaka University, Toyonaka, Osaka 560-0043, Japan}
\email{tomita@math.sci.osaka-u.ac.jp}

\thanks{B. Park is supported in part by NRF grant 2022R1F1A1063637 and by POSCO Science Fellowship of POSCO TJ Park Foundation.  N. Tomita was supported by JSPS KAKENHI Grant Number 20K03700.}

\title[Sharp maximal function estimates for pseudo-differential operators]{Sharp maximal function estimates for linear and multilinear pseudo-differential operators}
\subjclass[2020]{Primary 47G30, 42B25, 35S05, 42B37}
\keywords{Pseudo-differential operator, Multilinear operator, Sharp maximal function, Weighted norm inequality}

\begin{abstract} 
In this paper, we study pointwise estimates for linear and multilinear pseudo-differential operators with exotic symbols in terms of the Fefferman-Stein sharp maximal function and Hardy-Littlewood type maximal function. 
Especially in the multilinear case, we use a multi-sublinear variant of the classical Hardy-Littlewood maximal function introduced by Lerner, Ombrosi, P\'erez, Torres, and Trujillo-Gonz\'alez \cite{Le_Om_Pe_To_Tr2009}, which provides more elaborate and natural weighted estimates in the multilinear setting.
\end{abstract}

\maketitle


\section{Introduction}

 For a locally integrable function $f$ on $\bbrn$, we define the (homogeneous) sharp maximal function $\mathscr{M}^{\sharp}f$ by
 $$\mathscr{M}^{\sharp}f(x):=\sup_{Q:x\in Q}\inf_{c_Q\in \mathbb{C}}\bigg(\frac{1}{|Q|}\int_Q \big| f(y)-c_Q\big|\; dy\bigg)$$
 where the supremum is taken over all cubes in $\bbrn$ with sides parallel to the axes containing the point $x$. Similarly, we also define its inhomogeneous version by 
$$\mathcal{M}^{\sharp}f(x):=\sup_{Q:x\in Q, \ell(Q)\ge 1}\bigg(\frac{1}{|Q|}\int_Q\big| f(y)\big|\; dy\bigg)+\sup_{Q:x\in Q, \ell(Q)<1}\inf_{c_Q\in \mathbb{C}}\bigg(\frac{1}{|Q|}\int_Q \big| f(y)-c_Q\big|\; dy\bigg).$$
Clearly, it holds that
\begin{equation}\label{twomaxrel}
\mathscr{M}^{\sharp}f(x)\le \mathcal{M}^{\sharp}f(x), \q x\in\bbrn.
\end{equation}
 These sharp maximal functions were originally motivated by the definitions of the spaces $BMO$ and $bmo$ 
 with
 $$\Vert f\Vert_{BMO(\bbrn)}:=\big\Vert \mathscr{M}^{\sharp}f\big\Vert_{L^{\infty}(\bbrn)}\q \text{and} \q\Vert f\Vert_{bmo(\bbrn)}:=\big\Vert \mathcal{M}^{\sharp}f\big\Vert_{L^{\infty}(\bbrn)},$$
 but they are also very convenient tools to obtain pointwise estimates for many operators appearing in harmonic analysis. For example, if $T$ is a Calder\'on-Zygmund operator, then we have the pointwise inequality that for $1<r<\infty$
 \begin{equation}\label{sharphardy}
 \mathscr{M}^{\sharp}\big(Tf\big)(x)\lesssim  \mathrm{M}_rf(x)
 \end{equation}
 where $\mathrm{M}_r$ is the $L^r$ version of the Hardy-Littlewood maximal operator, defined by the formula
 \begin{equation}\label{hardymax}
 \mathrm{M}_rf(x):=\sup_{Q:x\in Q}\bigg(\frac{1}{|Q|}\int_Q |f(y)|^r\; dy \bigg)^{\frac{1}{r}}
 \end{equation}
 for  locally $r$-th power integrable functions $f$ on $\bbrn$,
 and we simply write $\mathrm{M}f(x)$ if $r=1$.
 Here and in the sequel, the symbol $A\lesssim B$ indicates that $A\le CB$ for some constant $C>0$ independent of the variable quantities $A$ and $B$, and $A\sim B$ if $A\lesssim B$ and $B\lesssim A$ hold simultaneously.
Moreover, if $1\le p_0\le p<\infty$ and $f\in L^{p_0}(\bbrn)$,  then
\begin{equation}\label{flpsharpmax}
\Vert f\Vert_{L^p(\bbrn)}\lesssim \big\Vert \mathscr{M}^{\sharp}f\big\Vert_{L^p(\bbrn)},
\end{equation} which was introduced by Fefferman and Stein \cite{Fe_St1972}.
  Due to \eqref{sharphardy} and \eqref{flpsharpmax}, we can relate $Tf$ to $\mathrm{M}_rf$ for $1<r<p$ and then the $L^p$ boundedness for $T$ follows from the maximal inequality for $\mathrm{M}_r$. To be specific,
  \begin{equation}\label{basicidea}
  \Vert Tf\Vert_{L^p(\bbrn)}\lesssim \big\Vert \mathscr{M}^{\sharp}(Tf)\big\Vert_{L^p(\bbrn)}\lesssim \big\Vert \mathrm{M}_rf \big\Vert_{L^p(\bbrn)}\lesssim\Vert f\Vert_{L^p(\bbrn)}.
  \end{equation}
 This idea appeared actually in a number of papers  in harmonic analysis; see  \cite{Ch_To1985, Fe_St1972, Jo2006, Ku_Wh1979, Mi1982, Ta2012}.
  Chanillo and Torchinsky \cite{Ch_To1985} employed such a technique to establish weighted norm inequalities for pseudo-differential oeprators.
  Given $m\in\bbr$ and $0\le \delta\le \rho\le 1$, the H\"ormander symbol class $S_{\rho,\delta}^m(\bbrn)$ consists of infinitely many differentiable functions $\sigma(x,\xi)$ on $\bbrn\times \bbrn$ such that for any multi-indices $\alpha, \beta\in (\bbn_0)^n:=\{0,1,2,\dots\}^n$ there exists a constant $C_{\alpha,\beta}$ such that
 $$\big| \partial_{x}^{\alpha}\partial_{\xi}^{\beta}\sigma(x,\xi)\big|\le C_{\alpha,\beta}\big( 1+|\xi|\big)^{m+\delta|\alpha|-\rho|\beta|}.$$
Let $\mathscr{S}(\bbrn)$ denote the Schwartz space on $\bbrn$.
 For a symbol $\sigma\in S_{\rho,\delta}^{m}(\bbrn)$, the corresponding (linear) pseudo-differential operator $T_{\sigma}$ is defined as
 $$T_{\sigma}f(x):=\int_{\bbrn}\sigma(x,\xi)\wh{f}(\xi) e^{2\pi i\langle x,\xi\rangle}\; d\xi$$
  for $f\in\mathscr{S}(\bbrn)$, where $\wh{f}(\xi):=\int_{\bbrn} f(x)e^{-2\pi i\langle x,\xi\rangle}\;dx$ is the Fourier transform of $f$.
 Denote by $\mathrm{Op}S_{\rho,\delta}^{m}(\bbrn)$ the class of pseudo-differential operators with symbols in $S_{\rho,\delta}^{m}(\bbrn)$.
 For $0\le \delta<  \rho\le 1$, the $L^2$-boundedness of $T_{\sigma}\in \mathrm{Op}S_{\rho,\delta}^{m}(\bbrn)$ was studied by H\"ormander \cite[Theorem 3.5]{Ho1967}
 and additionally the case $0\le \delta=\rho<1$ was treated by Calder\'on and Vaillancourt \cite{Ca_Va1972}. Later, their result was extended to $L^p$ space for $1<p<\infty$ by Fefferman \cite{Fe1973}.
   \begin{customthm}{A}\cite{Ca_Va1972, Fe1973}\label{thma}
Let $0<\rho<1$ and $1<p<\infty$.
Suppose that
\begin{equation*}
m\le -n(1-\rho)\Big|\frac{1}{2}-\frac{1}{p} \Big|.
\end{equation*}
Then $\sigma\in S_{\rho,\rho}^{m}(\bbrn)$ satisfies
$$\big\Vert T_{\sigma}f\big\Vert_{L^p(\bbrn)}\lesssim \Vert f\Vert_{L^p(\bbrn)}$$
for $f\in\mathscr{S}(\bbrn)$.
 \end{customthm}
We also refer to \cite{Pa_So1988, Park2018} for the extension to local Hardy spaces $h^p$ for $0<p<\infty$ and Triebel-Lizorkin spaces $F_p^{s,q}$.

\hfill

 Recall Muckenhoupt's $A_p$ class of weights $w$, saying $w\in A_p$, for $1\le p<\infty$ if $w$ is a nonnegative locally integrable function satisfying
\begin{align*}
 \mathrm{M}w(x)\lesssim w(x) \q \text{ for almost all }~x\in\bbrn \qq\qq\qq& \text{ when } p=1,\\
 \sup_{Q:\text{cubes}}\Big( \frac{1}{|Q|}\int_Q w(x)\; dx\Big)\Big(\frac{1}{|Q|}\int_Q \big( w(x)\big)^{-\frac{1}{p-1}}\; dx \Big)^{p-1}<\infty \qq& \text{ when } 1<p<\infty.
 \end{align*}
For a weight $w$,
the weighted Lebesgue space $L^p(w)$, $0<p<\infty$, consists of all measureble functions $f$ on $\bbrn$ satisfying
$$\|f\|_{L^p(w)}:=\left(\int_{\bbrn}|f(x)|^p w(x)\, dx \right)^{\frac{1}{p}}<\infty . $$
It is known in \cite{Mu1972} that
 for $1<p<\infty$ 
  \begin{equation}\label{chaweight}
  w\in A_p \q\text{if and only if}\q \Vert \mathrm{M}f\Vert_{L^p(w)}\lesssim \Vert f\Vert_{L^p(w)}.
  \end{equation}
 For $p=\infty$, we define $A_{\infty}:=\bigcup_{p>1}A_p$ and then it turns out that $A_p\subset A_q$ if $1\le p\le q\le \infty$.

Now we state the result of Chanillo and Torchinsky \cite{Ch_To1985}.
 \begin{customthm}{B}\cite{Ch_To1985}\label{thmb}
Let $0\le \delta<\rho<1$ and $2\le p<\infty$. Suppose  $w\in A_{p/2}$ and
 $$\sigma\in S_{\rho,\delta}^{-\frac{n}{2}(1-\rho)}(\bbrn).$$
Then we have
 $$\Vert T_{\sigma}f\Vert_{L^p(w)}\lesssim \Vert f\Vert_{L^p(w)}$$
for $f\in \mathscr{S}(\bbrn)$.
 \end{customthm}
As mentioned above, the proof of Theorem \ref{thmb} is based on the pointwise maximal estimate such as \eqref{sharphardy}. 
Indeed, one of the key estimates in \cite{Ch_To1985} is the following pointwise inequality:
\begin{equation}\label{ChToptest}
\mathscr{M}^{\sharp}\big(T_{\sigma}f\big)(x)\lesssim \mathrm{M}_2f(x)
\end{equation}
for $f\in \mathscr{S}(\bbrn)$.
The inequality \eqref{ChToptest} has been extended to the case $0<\delta=\rho<1$ and $1<r\le 2$, as below, by Miyachi and Yabuta \cite{Mi_Ya1987}, but with a certain restriction on the range of $\rho$.
 \begin{customthm}{C}\cite{Mi_Ya1987}\label{thmc}
Let $1<r\le 2$, $0<\rho\le \frac{r}{2}$, and $\rho<1$.
Suppose that $m\le -\frac{n}{r}(1-\rho)$ and $\sigma\in S_{\rho,\rho}^{m}(\bbrn)$.
Then it holds that
$$\mathscr{M}^{\sharp}\big(T_{\sigma}f\big)(x)\lesssim \mathrm{M}_rf(x)$$
for $f\in \mathscr{S}(\bbrn)$.
 \end{customthm}

We also refer to \'Alvarez and Hounie \cite{Al_Ho1990},
Michalowski, Rule, and Staubach \cite{Mi_Ru_St2012}, and Wang \cite{Wa2022} for related results to Theorem \ref{thmc}.

The first main result of this article is a generalization of Theorem \ref{thmc} to $0<\rho<1$.
In order to state the result, we introduce the $L^r$ version of  the inhomogeneous sharp maximal function
 \begin{align*}
 \mathcal{M}_r^{\sharp}f(x)&:=\sup_{Q:x\in Q, \ell(Q)\ge 1}\bigg(\frac{1}{|Q|}\int_Q\big| f(y)\big|^r\; dy\bigg)^{\frac{1}{r}}\\
 &\q\q+\sup_{Q:x\in Q, \ell(Q)<1}\inf_{c_Q\in \mathbb{C}}\bigg(\frac{1}{|Q|}\int_Q \big| f(y)-c_Q\big|^r\; dy\bigg)^{\frac{1}{r}}
 \end{align*}
for $0<r<\infty$, where the supremum ranges over all cubes in $\bbrn$ containing $x$ whose side-length is $\ge 1$ in the first one, and is $< 1$ in the second one.
\begin{theorem}\label{firstmainthm}
Let $1<r\le 2$ and $0<\rho<1$. Suppose that $m\le -\frac{n}{r}(1-\rho)$ and $\sigma\in S_{\rho,\rho}^{m}(\bbrn)$. Then it holds that
$$\mathcal{M}_r^{\sharp}\big( T_{\sigma}f\big)(x)\lesssim \mathrm{M}_rf(x)$$
for $f\in \mathscr{S}(\bbrn)$.
\end{theorem}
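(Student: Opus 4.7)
The proof proceeds by case analysis on the side length $\ell(Q)$ of the cube $Q \ni x$, matching the two-part structure of $\mathcal{M}_r^\sharp$.

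For $\ell(Q) \ge 1$ (the large-cube case), split $f = f\chi_{3Q} + f\chi_{(3Q)^c}$. The local part is bounded using the $L^r$-boundedness of $T_\sigma$ from Theorem A, valid since $m = -n(1-\rho)/r$ satisfies $m \le -n(1-\rho)|1/r - 1/2|$ (with a margin of $n(1-\rho)/2$). The tail is handled by iterated integration by parts in $\xi$, which using $\rho > 0$ yields rapid kernel decay $|K(z,y)| \lesssim |z-y|^{-N}$ on $|z-y| \ge 1$ and hence a pointwise estimate by $\mathrm{M}_rf(x)$.

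For $\ell(Q) = 2^{-J} < 1$ (the main case), introduce a dyadic Littlewood--Paley decomposition $\sigma = \sum_{k \ge 0}\sigma_k$ with $\sigma_k$ frequency-localized to $|\xi|\sim 2^k$, and split $\sigma = \sigma^{\mathrm{low}} + \sigma^{\mathrm{high}}$ at the critical index $k_0 \sim J/\rho$. This choice of $k_0$ reflects the natural spatial scale of each $T_{\sigma_k}$: the kernel estimate $|K_k(z,y)| \lesssim 2^{k(m+n)}(1+2^{k\rho}|z-y|)^{-N}$ shows that $T_{\sigma_k}$ essentially acts at scale $2^{-k\rho}$, and $k_0$ is chosen so that $2^{-k_0\rho}\sim \ell(Q)$. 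Take
\[
c_Q = T_{\sigma^{\mathrm{low}}}(f\chi_{Q^*})(x_Q),
\]
where $Q^*$ is the axis-parallel cube of side $1$ centered at $x_Q$, so that
\[
T_\sigma f(z) - c_Q = \bigl[T_{\sigma^{\mathrm{low}}}(f\chi_{Q^*})(z) - T_{\sigma^{\mathrm{low}}}(f\chi_{Q^*})(x_Q)\bigr] + T_{\sigma^{\mathrm{low}}}(f\chi_{(Q^*)^c})(z) + T_{\sigma^{\mathrm{high}}}f(z).
\]
The three pieces are handled separately. The second (tail) piece uses the long-range kernel decay of $K^{\mathrm{low}}$ obtained again by integration by parts in $\xi$ to give a pointwise bound by $\mathrm{M}_rf(x)$. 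The first (smooth-difference) piece is controlled by a Taylor-type expansion in $z$ combined with bounds on $\partial_z K^{\mathrm{low}}$, exploiting that $T_{\sigma^{\mathrm{low}}}$ behaves like a ``mollifier'' at scale $2^{-k_0\rho}\sim \ell(Q)$ and combining with Bernstein's inequality for the frequency-localized input $T_{\sigma^{\mathrm{low}}}(f\chi_{Q^*})$. The third piece is estimated via the operator norm bound
\[
\|T_{\sigma^{\mathrm{high}}}\|_{L^r \to L^r} \lesssim 2^{-k_0 n(1-\rho)/2},
\]
obtained by applying Theorem A to normalized symbols $2^{kn(1-\rho)/2}\sigma_k \in S^{-n(1-\rho)|1/r-1/2|}_{\rho,\rho}$ (which have uniformly bounded seminorms across $k$) and summing the resulting geometric series.

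The hardest step is the fine off-diagonal analysis needed for $T_{\sigma^{\mathrm{high}}}$ acting on the ``middle tail'' of $f$ at distance $\ell(Q) \lesssim |y-x_Q| \lesssim 1$: a naive combination of the $L^r$-operator bound with pointwise kernel decay picks up a factor $2^{Jn/r}$ from the cube's volume normalization that must be exactly cancelled by the $2^{-k_0 n(1-\rho)/2}$ decay. Making this cancellation work uniformly in $J$ for the full range $0 < \rho < 1$ --- rather than the restricted range $\rho \le r/2$ in Miyachi--Yabuta \cite{Mi_Ya1987} --- is the principal technical challenge; it likely requires a Cotlar--Stein-type orthogonality argument, or a refined off-diagonal $L^r$ estimate for the pieces $T_{\sigma_k}$ coupling the operator's margin-induced decay with spatial kernel concentration, summed carefully against dyadic annuli around $Q$.
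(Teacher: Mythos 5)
Your large-cube case and the overall skeleton for $\ell(Q)<1$ (Littlewood--Paley in $k$, frequency cut at $2^{k_0\rho}\sim \ell(Q)^{-1}$, subtracting a constant built from the low part, and using the normalized symbols $2^{kn(1-\rho)/2}\sigma_k\in S^{-n(1-\rho)(1/r-1/2)}_{\rho,\rho}$ for operator-norm decay) are in the right spirit, but the proposal has genuine gaps. First, the difficulty you flag yourself for $T_{\sigma^{\mathrm{high}}}$ is real in your setup but is an artifact of localizing $f$ at unit scale: if instead you split $f$ at scale $\ell(Q)$ (a fixed dilate of $Q$), the local part gives $\frac{1}{|Q|^{1/r}}\Vert T_{\sigma_k}(f\chi_{cQ})\Vert_{L^r}\lesssim 2^{-kn(1-\rho)/2}\mathrm{M}_rf(x)$ with no volume mismatch, and the far part is summable by the kernel bound $\lesssim (2^{k\rho}\ell(Q))^{-(N-n/r)}\mathrm{M}_rf(x)$ precisely because $2^{k\rho}\ell(Q)\ge 1$ for $k\ge k_0$; this is exactly the paper's term $\mathcal{J}_3$, and no Cotlar--Stein or refined off-diagonal argument is needed. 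So that piece of your proof is missing, and the speculative fix points in the wrong direction.

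Second, and more seriously, the genuinely hard part is your low-frequency ``smooth-difference'' piece, and the sketch there does not close. With input localized on a unit cube and only gradient/Bernstein bounds ($\nabla T_{\sigma_k}$ costs a factor $2^k$ against the $L^r$ operator norm $2^{-kn(1-\rho)/2}$), the sum $\sum_{k<k_0}\ell(Q)\,2^{k}2^{-kn(1-\rho)/2}$ multiplied by the volume-mismatch factor $\ell(Q)^{-n/r}$ is not bounded uniformly in $\ell(Q)$ for the full range $0<\rho<1$ (it already fails in low dimensions and for $\rho$ near $1$), and kernel decay is useless for the portion of $f$ within distance $\sim\ell(Q)^{\rho}$ of $Q$. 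The paper's proof needs two further ingredients here that your proposal lacks: (i) localization of $f$ at the scale $\ell(Q)^{\rho}$ (and, in the intermediate regime $1\le 2^k\ell(Q)$, $2^{k\rho}\ell(Q)<1$, at the $k$-dependent scale $\ell(Q)^{\rho}(2^k\ell(Q))^{-\epsilon}$), with the local part controlled not by $L^r\to L^r$ bounds but by the Miyachi--Yabuta $L^r\to L^{r/\rho}$ estimate; and (ii) since that estimate is only available for $\rho<r/2$, the parabolic rescaling $c_k(x,\xi)=\sigma_k(2^{-\lambda k}x,2^{\lambda k}\xi)$ with $\frac{2\rho-r}{2-r}<\lambda<\rho$, which places $c_k$ uniformly in $S^{m/(1-\lambda)}_{\rho',\rho'}$ with $\rho'=\frac{\rho-\lambda}{1-\lambda}<\frac r2$ and yields $\Vert T_{\sigma_k}\Vert_{L^r\to L^{r(1-\lambda)/(\rho-\lambda)}}\lesssim 2^{\lambda nk\frac{1-\rho}{r(1-\lambda)}}$, the growth being absorbed by the extra factor $(2^k\ell(Q))^{-\epsilon n/r}$ from the enlarged localization. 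Without these (or a substitute), the constant-subtraction argument for the low part, and hence the theorem for $\rho\ge r/2$, is not proved.
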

\begin{remark}
We consider the inhomogeneous sharp maximal operator $\mathcal{M}_r^{\sharp}$ in Theorem \ref{firstmainthm} and the same conclusion also holds with $\mathcal{M}_r^{\sharp}$ replaced with homogeneous one $\mathscr{M}^{\sharp}$ in view of \eqref{twomaxrel}  and H\"older's inequality. 
\end{remark}
Then the following weighted norm inequality for $T_{\sigma}$ holds
as a corollary of Theorem \ref{firstmainthm}.
\begin{corollary}\label{p=rweight}
Let  $0\le \delta\le \rho<1$, $0<\rho<1$, $1<r\le 2$, and $r\le p<\infty$. Suppose that $m\le -\frac{n}{r}(1-\rho)$ and $\sigma\in S_{\rho,\delta}^{m}(\bbrn)$.
If $w\in A_{\frac{p}{r}}$, then
$$\Vert T_{\sigma}f\Vert_{L^p(w)}\lesssim \Vert f\Vert_{L^p(w)}$$
for $f\in \mathscr{S}(\bbrn)$.
\end{corollary}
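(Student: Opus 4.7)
The plan is to chain Theorem \ref{firstmainthm} with a weighted Fefferman--Stein inequality and the weighted boundedness of the Hardy--Littlewood maximal operator, exactly in the spirit of \eqref{basicidea}. First I would reduce the case $\delta\le\rho$ to $\delta=\rho$: from the defining symbolic inequalities $|\partial_x^\alpha\partial_\xi^\beta\sigma(x,\xi)|\le C_{\alpha,\beta}(1+|\xi|)^{m+\delta|\alpha|-\rho|\beta|}$ and $\delta\le\rho$, one obtains the class inclusion $S_{\rho,\delta}^{m}(\bbrn)\subseteq S_{\rho,\rho}^{m}(\bbrn)$, so Theorem \ref{firstmainthm} applies and gives
$$\mathcal{M}_r^{\sharp}(T_\sigma f)(x)\lesssim \mathrm{M}_r f(x), \qquad x\in\bbrn.$$

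Next, I would invoke the weighted Fefferman--Stein inequality for the inhomogeneous $r$-sharp maximal operator: for any $w\in A_\infty$, $0<p<\infty$, and $g$ suitably integrable a priori,
$$\|g\|_{L^p(w)}\lesssim \|\mathcal{M}_r^{\sharp} g\|_{L^p(w)}.$$
Since $w\in A_{p/r}\subseteq A_\infty$, applying this to $g=T_\sigma f$ and inserting the pointwise bound above yields
$$\|T_\sigma f\|_{L^p(w)}\lesssim \|\mathcal{M}_r^{\sharp}(T_\sigma f)\|_{L^p(w)}\lesssim \|\mathrm{M}_r f\|_{L^p(w)}.$$
Finally, using $\mathrm{M}_r f=(\mathrm{M}(|f|^r))^{1/r}$ and the assumption $w\in A_{p/r}$ with $p/r\ge 1$, Muckenhoupt's characterization \eqref{chaweight} (applied to $|f|^r$ at exponent $p/r$, and reducing to the $A_1$--$L^1$ endpoint bound for $\mathrm{M}$ when $p=r$) gives
$$\|\mathrm{M}_r f\|_{L^p(w)}\lesssim \|f\|_{L^p(w)},$$
which completes the chain.

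The main technical obstacle is justifying the application of the weighted Fefferman--Stein inequality, which requires a priori finiteness of $\|T_\sigma f\|_{L^p(w)}$ (or at least of $\|T_\sigma f\|_{L^{p_0}(w_0)}$ for some auxiliary pair $(p_0,w_0)$). For $f\in\mathscr{S}(\bbrn)$ this is not automatic, since the symbol $\sigma\in S_{\rho,\delta}^{m}(\bbrn)$ need not produce a Schwartz image. The standard remedy is a regularization argument: introduce $\sigma_\epsilon(x,\xi):=\sigma(x,\xi)\phi(\epsilon\xi)$ for a smooth cutoff $\phi$ with $\phi(0)=1$, so that $\sigma_\epsilon$ has $\xi$-compact support uniformly in the class, $T_{\sigma_\epsilon}f\in\mathscr{S}(\bbrn)$, and the preceding argument applies to $\sigma_\epsilon$ with estimates uniform in $\epsilon$; one then passes to the limit $\epsilon\to 0$ using Fatou's lemma together with the unweighted $L^p$ boundedness from Theorem \ref{thma} to control $T_\sigma f$ before reintroducing the weight.
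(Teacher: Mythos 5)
Your argument is fine for $p>r$, and there it coincides with the paper's chain \eqref{tsifpwflpw}: the reduction $S^{m}_{\rho,\delta}(\bbrn)\subseteq S^{m}_{\rho,\rho}(\bbrn)$ for $\delta\le\rho$ is correct, and the a priori finiteness issue you raise is real but harmless (for $\delta\le\rho<1$ the operator $T_\sigma$ maps $\mathscr{S}(\bbrn)$ into $\mathscr{S}(\bbrn)$, and $A_\infty$ weights have at most polynomial growth, so $T_\sigma f\in L^p(w)$; your regularization would also do). The genuine gap is the endpoint $p=r$, i.e. $w\in A_1$: your last step asks for $\Vert\mathrm{M}_r f\Vert_{L^r(w)}=\Vert\mathrm{M}(|f|^r)\Vert_{L^1(w)}^{1/r}\lesssim\Vert f\Vert_{L^r(w)}$, which would be a strong $(1,1)$ bound for the Hardy--Littlewood maximal operator on $L^1(w)$. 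No such bound exists: the $A_1$ condition characterizes only the weak-type $(1,1)$ inequality, and $\mathrm{M}$ is not bounded on $L^1(w)$ for any $A_1$ weight (already locally, $\mathrm{M}(\chi_{B(x_0,\delta)})$ has an $L^1(w)$ norm of size $\delta^n w(x_0)\log(1/\delta)$ near $B(x_0,\delta)$, so the operator norm blows up as $\delta\to 0$). This is exactly why \eqref{chaweight} is stated only for $1<p<\infty$, so your chain proves the corollary only in the range $p>r$.

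This is also precisely why the paper treats $p=r$ separately. There one invokes the reverse H\"older property of $A_1$ (\cite[Corollary 7.6 (2)]{Duan}) to find $\epsilon>0$ with $w^{1+\epsilon}\in A_1\subset A_{q_0/r}$ for any $q_0>r$, applies the already-established case $p>r$ to obtain $\Vert T_\sigma f\Vert_{L^{q_0}(w^{1+\epsilon})}\lesssim\Vert f\Vert_{L^{q_0}(w^{1+\epsilon})}$, combines this with the unweighted bound $\Vert T_\sigma f\Vert_{L^{q_1}(\bbrn)}\lesssim\Vert f\Vert_{L^{q_1}(\bbrn)}$ for $1<q_1<r$ coming from Theorem \ref{thma} (valid since $m\le-\tfrac{n}{r}(1-\rho)\le -n(1-\rho)(\tfrac{1}{q_1}-\tfrac{1}{2})$), and then interpolates with change of measure (Stein--Weiss, \cite[Theorem 5.5.3]{Be_Lo}) to land at $L^r(w)$. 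Some device of this kind (or an extrapolation argument) is needed at $p=r$; a direct appeal to an ``$A_1$--$L^1$ endpoint bound for $\mathrm{M}$'' cannot be repaired.
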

The proof of the corollary is quite immediate except the case when $p=r$. Indeed, when $p>r$ and $w \in A_{p/r}$, similar to \eqref{basicidea},  we obtain
\begin{equation}\label{tsifpwflpw}
\Vert T_{\sigma}f\Vert_{L^p(w)}\lesssim \big\Vert \mathcal{M}_r^{\sharp}(T_{\sigma}f)\big\Vert_{L^p(w)}\lesssim \big\Vert \mathrm{M}_rf\big\Vert_{L^p(w)}\lesssim \Vert f\Vert_{L^p(w)}
\end{equation}
by applying \eqref{flpsharpmax}, \eqref{twomaxrel}, H\"older's inequality, and \eqref{chaweight}.
For $p=r$, we assume $w\in A_1$.
According to \cite[Corollary 7.6 (2)]{Duan}, there exists $\epsilon>0$ such that
$$w^{1+\epsilon}\in A_1.$$
For any $r<q_0<\infty$,
the embedding theory for $A_p$ weights implies $w^{1+\epsilon}\in A_{q_0/r}$ and thus
the inequality \eqref{tsifpwflpw} yields
$$\Vert T_{\sigma}f\Vert_{L^{q_0}(w^{1+\epsilon})}\lesssim \Vert f\Vert_{L^{q_0}(w^{1+\epsilon})}.$$
On the other hand,
it follows from Theorem \ref{thma} that 
$$\Vert T_{\sigma}f\Vert_{L^{q_1}(\bbrn)}\lesssim \Vert f\Vert_{L^{q_1}(\bbrn)}$$
 for all $1<q_1<r$
as $m\le -\frac{n}{r}(1-\rho)\le -n(1-\rho)(\frac{1}{q_1}-\frac{1}{2})$ due to $\frac{1}{r}\ge \frac{1}{2}>\frac{1}{q_1}-\frac{1}{2}$.
By choosing $q_0$ and $q_1$ appropriately and applying interpolation theory with \cite[Theorem 5.5.3]{Be_Lo}, the desired estimate follows.

\hfill

Now we turn our attention to multilinear pseudo-differential operators.
Let $l$ be a positive integer greater than $1$, which will serve as the degree of multilinearity of operators.
The multilinear H\"ormander symbol class $\mathbb{M}_lS_{\rho,\delta}^{m}(\bbrn)$ consists of all smooth functions $\sigma$ on $(\bbrn)^{l+1}$ having the property that for all multi-indices $\alpha, \beta_1,\dots,\beta_l\in (\bbn_0)^n$ there exists a constant $C_{\alpha,\beta_1,\dots,\beta_l}>0$ such that
$$\big| \partial_{x}^{\alpha}\partial_{\xi_1}^{\beta_1}\cdots\partial_{\beta_l}^{\beta_l}\sigma(x,\xi_1,\dots,\beta_l)\big|\le C_{\alpha,\beta_1,\dots,\beta_l}\big( 1+|\xi_1|+\cdots+|\beta_l|\big)^{m+\delta |\alpha|-\rho(|\beta_1|+\dots+|\beta_l|)},$$
 and let $T_{\sigma}$ (which is denoted as in the linear setting without risk of confusion as linear operators will be treated only in Section \ref{linearproof} and will not appear in the sequel) now denote the multilinear pseudo-differential operator associated with $\sigma\in \mathbb{M}_lS_{\rho,\delta}^{m}(\bbrn)$, defined by
 $$T_{\sigma}\big(f_1,\dots,f_l\big)(x):=\int_{(\bbrn)^l}\sigma(x,\xi_1,\dots,\xi_l)\prod_{j=1}^{l}\wh{f_j}(\xi_j) \, e^{2\pi i\langle x,\xi_1+\dots+\xi_l\rangle}\; d\xi_1  \cdots d\xi_l$$
for $f_1,\dots,f_l\in \mathscr{S}(\bbrn)$. We denote by $\mathrm{Op}\mathbb{M}_lS_{\rho,\delta}^{m}(\bbrn)$ the family of such multilinear operators.

In contrast to the $L^2$ estimate for the linear operator in Theorem \ref{thma}, which is especially known as Calder\'on-Vaillancourt theorem \cite{Ca_Va1972}, B\'enyi and Torres \cite{Be_To2004} pointed out that the degree condition $m=0$ does not assure any boundedness of the bilinear operators corresponding to $\sigma \in \mathbb{M}_2S_{0,0}^{m}(\bbrn)$. This difference between linear and bilinear cases has attracted much interest in the study of bilinear pseudo-differential operators.
The symbolic calculus of the bilinear operator was initially obtained in \cite{Be_Ma_Na_To2010} and after that, several boundedness properties of the bilinear operators have been established in a few papers.
For $0 \le \rho <1$ and $0<p_1,p_2 \le \infty$, we define
$$m_{\rho}(p_1,p_2):=-n(1-\rho)\bigg[\max\Big(\frac{1}{p_1},\frac{1}{2}\Big)+\max\Big(\frac{1}{p_2},\frac{1}{2}\Big)-\min\Big(\frac{1}{p},\frac{1}{2} \Big)\bigg]$$
where $1/p=1/p_1+1/p_2$.
In the subcritical case $m<m_{\rho}(p_1,p_2)$,
Michalowski, Rule, and Staubach \cite{Mi_Ru_St2014} proved that
if $0 \le \rho<1$ and $\sigma\in \mathbb{M}_2S_{\rho,\rho}^{m}(\bbrn)$, then
\begin{equation}\label{noconendcon}
\big\Vert T_{\sigma}(f_1,f_2)\big\Vert_{L^p(\bbrn)}\lesssim \Vert f_1\Vert_{L^{p_1}(\bbrn)}\Vert f_2\Vert_{L^{p_2}(\bbrn)}
\end{equation}
for $1\le p \le 2 \le p_1,p_2 \le \infty$,
and this result was extended to the range $1 \le p, p_1, p_2 \le \infty$ by
B\'enyi, Bernicot, Maldonado, Naibo, and Torres \cite{Be_Be_Ma_Na_To}.
More recently,
the boundedness result has been improved and further extended by Miyachi and Tomita \cite{Mi_To2013,  Mi_To2019, Mi_To2020} who established the bilinear estimate \eqref{noconendcon} in the critical case 
$$m=m_{\rho}(p_1,p_2)$$
for all $0<p,p_1,p_2\le \infty$ by properly replacing the Lebesgue space $L^p$ with (real) Hardy space $H^p$ when $0<p \le 1$ and with $BMO$ when $p=\infty$. For the  definitions and properties of $H^p$ and $BMO$, we refer to \cite{Fe_St1972} and \cite[Chapters 3 and 4]{St1993}.

  \begin{customthm}{D}\cite{Mi_To2013, Mi_To2019, Mi_To2020}\label{thmd}
 Let $0<p_1,p_2,p\le \infty$ with $1/p_1+1/p_2=1/p$ .
 Suppose that $0\le \rho<1$, $m\in\bbr$ and $\sigma\in \mathbb{M}_2S_{\rho,\rho}^{m}$.
If 
\begin{equation}\label{conditiononmle}
m\le m_{\rho}(p_1,p_2).
\end{equation}
then we have
$$\big\Vert T_{\sigma}(f_1,f_2)\big\Vert_{X^p(\bbrn)}\lesssim \Vert {f_1}\Vert_{H^{p_1}(\bbrn)}\Vert {f_2}\Vert_{H^{p_2}(\bbrn)}$$
for $f_i \in \mathscr{S}(\bbrn)\bigcap H^{p_i}(\bbrn)$, $i=1,2$,
where $H^{p}$ is the (real) Hardy spaces for $0<p<\infty$ and
we take the convention $H^{\infty}=L^{\infty}$ and
\begin{equation}\label{xpspacedef}
 X^p:=\begin{cases}
 L^p & 0<p<\infty\\
 BMO & p=\infty
 \end{cases}.
 \end{equation}
 \end{customthm}
We remark that the condition \eqref{conditiononmle} is a necessary condition for the boundedness. See \cite{Mi_To2013} for more details.

For general multilinear estimates, we have the following result which deals only with the case $\rho=0$.
  \begin{customthm}{E}\cite{Ka_Mi_To2022}\label{thme}
 Let $0<p_1,\dots,p_l\le \infty$ with $1/p_1+\dots+1/p_l=1/p$. Suppose that $m\in\bbr$ and $\sigma\in \mathbb{M}_lS_{0,0}^m(\bbrn)$.
 Then 
  $$m\le m_0(\ppp):=-n\bigg[ \sum_{j=1}^{l}\max\Big(\frac{1}{p_j},\frac{1}{2} \Big) -\min\Big(\frac{1}{p},\frac{1}{2} \Big)\bigg], \q \ppp:=(p_1,\dots,p_l)$$
 if and only if
 $$\big\Vert T_{\sigma}\big(f_1,\dots,f_l\big)\big\Vert_{Y^p(\bbrn)}\lesssim \prod_{j=1}^{l}\Vert f_j\Vert_{Y^{p_j}(\bbrn)}$$
 for $f_1,\dots,f_l\in \mathscr{S}(\bbrn)$,
 where $h^p$ denotes the local Hardy space, introduced by Goldberg \cite{Go1979}, and
 $$Y^p:=\begin{cases}
 h^p & 0<p<\infty\\
 bmo & p=\infty
 \end{cases}.$$
  \end{customthm}
Here, we point out that
\begin{equation*}
H^p\hookrightarrow h^p \q \text{and}\q bmo\hookrightarrow BMO.
\end{equation*}
For the case $0<\rho<1$, to the best of our knowledge, there is no multilinear extension of Theorem \ref{thmd} so far.  However, as a consequence of Theorem \ref{thme}, we can easily obtain the following boundedness result.
\begin{proposition}\label{multilineargerho}
Let $0<p_1,\dots,p_l\le \infty$ with $1/p_1+\dots+1/p_l=1/p$.
Suppose that $0<\rho<1$, $m\in\bbr$, and $\sigma\in \mathbb{M}_lS_{\rho,\rho}^m(\bbrn)$.
If 
$$m< m_{\rho}(\ppp):=(1-\rho)m_{0}(\ppp), \q \ppp:=(p_1,\dots,p_l),$$
then we have
$$\big\Vert T_{\sigma}\big(f_1,\dots,f_l\big)\big\Vert_{X^p(\bbrn)}\lesssim \prod_{j=1}^{l}\Vert f_j\Vert_{H^{p_j}(\bbrn)}$$
 for all $f_j\in \mathscr{S}(\bbrn)\bigcap H^{p_j}(\bbrn)$,
 where $X^p$ is defined as in \eqref{xpspacedef} and we take the convention $H^{\infty}=L^{\infty}$.
\end{proposition}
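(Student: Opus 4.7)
The plan is to reduce the $\mathbb{M}_lS_{\rho,\rho}^{m}$ estimate to the $\rho=0$ case covered by Theorem~\ref{thme}, via a dyadic Littlewood--Paley decomposition of $\sigma$ in the frequency variables together with a $\rho$-adapted parabolic rescaling of each dyadic piece. First I would fix a smooth partition of unity $\{\phi_k\}_{k\ge 0}$ on $(\bbrn)^l$ with $\phi_k$ supported in $\{|(\xi_1,\dots,\xi_l)|\sim 2^k\}$ for $k\ge 1$, set $\sigma_k:=\sigma\,\phi_k$, and write $T_\sigma=\sum_{k\ge 0}T_{\sigma_k}$. The low-frequency piece $\sigma_0$ has frequency support in $\{|\xi|\lesssim 1\}$ with bounded derivatives, so it lies in $\mathbb{M}_lS_{0,0}^{m_0(\ppp)}(\bbrn)$ and is handled directly by Theorem~\ref{thme}; the main work is for $k\ge 1$.

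For each $k\ge 1$, I introduce the rescaled symbol
$$\wt{\sigma}_k(y,\eta):=\sigma_k\big(2^{-\rho k}y,\,2^{\rho k}\eta\big),\qq \eta=(\eta_1,\dots,\eta_l).$$
A direct computation using $\sigma_k\in \mathbb{M}_lS_{\rho,\rho}^{m}$ and its frequency localization at $|\xi_j|\sim 2^k$ gives $|\partial_y^{\alpha}\partial_{\eta_1}^{\beta_1}\cdots\partial_{\eta_l}^{\beta_l}\wt{\sigma}_k(y,\eta)|\lesssim 2^{km}$, while $\wt{\sigma}_k$ itself is frequency-supported in $\{|\eta_j|\sim 2^{k(1-\rho)}\}$. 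Since $m_0(\ppp)<0$ (which always holds for $l\ge 2$), on this support $(1+|\eta|)^{m_0(\ppp)}\sim 2^{k(1-\rho)m_0(\ppp)}$, so $\wt{\sigma}_k\in \mathbb{M}_lS_{0,0}^{m_0(\ppp)}(\bbrn)$ with seminorm bounded by $2^{k(m-(1-\rho)m_0(\ppp))}=2^{-\epsilon k}$, where $\epsilon:=m_\rho(\ppp)-m>0$ by hypothesis. Applying Theorem~\ref{thme} to $\wt{\sigma}_k$ then produces
$$\big\Vert T_{\wt{\sigma}_k}(h_1,\dots,h_l)\big\Vert_{Y^p(\bbrn)}\lesssim 2^{-\epsilon k}\prod_{j=1}^l\Vert h_j\Vert_{Y^{p_j}(\bbrn)}.$$

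A change of variables in the defining oscillatory integral yields the identity $T_{\sigma_k}(f_1,\dots,f_l)(x)=T_{\wt{\sigma}_k}(h_1,\dots,h_l)(2^{\rho k}x)$ with $h_j(y):=f_j(2^{-\rho k}y)$. Combining the Hardy-space dilation identity $\Vert h_j\Vert_{H^{p_j}}=2^{\rho k n/p_j}\Vert f_j\Vert_{H^{p_j}}$ and the scaling $\Vert F(2^{\rho k}\cdot)\Vert_{X^p}=2^{-\rho k n/p}\Vert F\Vert_{X^p}$ with the balance $1/p=\sum_j 1/p_j$, the dilation factors cancel exactly. On the input side I use $\Vert\cdot\Vert_{Y^{p_j}}\lesssim\Vert\cdot\Vert_{H^{p_j}}$ (the standard embedding $H^{p_j}\hookrightarrow h^{p_j}$, together with $L^\infty\hookrightarrow bmo$ at $p_j=\infty$); on the output side, the case $p=\infty$ is handled by $bmo\hookrightarrow BMO$, while for $0<p<\infty$ I use that $T_{\wt{\sigma}_k}(h_1,\dots,h_l)$ is essentially frequency-localized in $\{|\zeta|\lesssim 2^{k(1-\rho)}\}$ (from the frequency support of $\wt{\sigma}_k$) and that for such band-limited distributions $\Vert\cdot\Vert_{h^p}\sim\Vert\cdot\Vert_{L^p}$ holds by a Plancherel--Polya-type equivalence. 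These combine to give $\Vert T_{\sigma_k}(f_1,\dots,f_l)\Vert_{X^p(\bbrn)}\lesssim 2^{-\epsilon k}\prod_j\Vert f_j\Vert_{H^{p_j}(\bbrn)}$, and summing over $k\ge 0$ via the $\min(p,1)$-subadditivity of the $L^p$ quasinorm (or the $BMO$ triangle inequality when $p=\infty$) closes the argument. The main obstacle will be the output-side passage from $Y^p$ to $X^p$ in the range $0<p<1$, where no pointwise inclusion $h^p\subset L^p$ holds; the plan hinges on exploiting the (essential) frequency localization of each dyadic piece to circumvent this, while the remaining scaling bookkeeping and geometric summation are routine.
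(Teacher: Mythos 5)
Your proposal follows essentially the same route as the paper: the Littlewood--Paley decomposition $\sigma=\sum_k\sigma_k$, the dilation $\tau_k(x,\xxxi)=\sigma_k(2^{-\rho k}x,2^{\rho k}\xxxi)$ (the paper's \eqref{taudef} with $\lambda=\rho$), the observation that frequency localization upgrades the uniform bound $2^{km}$ to membership $2^{k\epsilon_0}\tau_k\in\mathbb{M}_lS_{0,0}^{m_0(\ppp)}(\bbrn)$ with $\epsilon_0=m_\rho(\ppp)-m$, the application of Theorem \ref{thme}, the exact cancellation of dilation factors via $1/p=\sum_j 1/p_j$, and the geometric sum with $\min(1,p)$-subadditivity. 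The one place you diverge is the output-side passage from $Y^p$ to $X^p$ for $0<p<1$, which you flag as the main obstacle and propose to settle by a Plancherel--Polya equivalence for band-limited functions. This is both unnecessary and, as stated, not quite rigorous: since the symbol depends on $x$, the spectrum of $T_{\tau_k}(h_1,\dots,h_l)$ is only ``essentially'' localized, and the Plancherel--Polya equivalence requires genuine compact Fourier support. The paper instead uses the elementary inequality $\Vert F\Vert_{L^p}\lesssim\Vert F\Vert_{h^p}$, which is valid for every locally integrable $F$ (hence for $T_{\tau_k}$ applied to Schwartz inputs, which is a smooth, well-behaved function), because $|F(x)|\le \sup_{0<t<1}|\phi_t\ast F(x)|$ at almost every point; no frequency localization of the output is needed. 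With that replacement your argument coincides with the paper's proof.
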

The proof of the proposition is quite standard by simply using Littlewood-Paley decomposition and classical dilation argument. 
For the sake of completeness, we provide the detailed proof in Section \ref{thm12pf}.

\hfill

Multi-variable extension of $A_p$ classes was introduced by Lerner, Ombrosi, P\'erez, Torres, and Trujillo-Gonz\'alez \cite{Le_Om_Pe_To_Tr2009}.
 \begin{defn}\cite{Le_Om_Pe_To_Tr2009}
 Let $1< p_1,\dots,p_l<\infty$ and $1/p=1/p_1+\dots+1/p_l$.
 Then we define $\mathrm{A}_{\ppp}$, $\ppp=(p_1,\dots,p_l)$, to be the class of $l$-tuples of weights $\vec{w}:=(w_1,\dots,w_l)$ satisfying
 $$\sup_Q\bigg[ \Big( \frac{1}{|Q|}\int_Q v_{\vec{w}}(x)\; dx \Big)^{\frac{1}{p}}\prod_{j=1}^{l}\Big(\frac{1}{|Q|}\int_Q \big(w_j(x)\big)^{1-p_j'}\; dx \Big)^{\frac{1}{p_j}}\bigg]<\infty$$ 
 where $p_j'$ denotes the H\"older conjugate of $p_j$ and
 $$v_{\vec{w}}(x):=\prod_{j=1}^{l}\big( w_j(x) \big)^{\frac{p}{p_j}}.$$
 \end{defn}
As an analogue of  \eqref{chaweight}, it was verified in \cite{Le_Om_Pe_To_Tr2009} that
the class $\mathrm{A}_{\ppp}$ can be characterized by a maximal inequality. 
For this one, let us define
 the multi-sublinear Hardy-Littlewood maximal operator $\mathbf{M}$  by
 $$\mathbf{M}\big(f_1,\dots,f_l \big)(x):=\sup_{Q:x\in Q}\bigg( \frac{1}{|Q|^l}\int_{Q^l}\prod_{j=1}^{l}\big|f_j(u_j) \big|\; d\uuu \bigg)$$
 for locally integrable functions $f_1,\dots,f_l$ on $\bbrn$, where $Q^l:=Q\times \cdots \times Q$, $d\uuu:=du_1\cdots du_l$, and
 the supremum is taken over all cubes in $\bbrn$ containing $x$.
   Of course, this maximal function is less than the product of maximal functions $\mathrm{M}f_j(x)$.

  The following theorem presents a maximal function characterization of the class $A_{\ppp}$.
   \begin{customthm}{F}\cite{Le_Om_Pe_To_Tr2009}\label{keylemma}
Let $1<p_1,\dots,p_l<\infty$ with $1/p=1/p_1+\dots+1/p_l$.
Then the inequality
 $$\big\Vert \mathbf{M}(f_1,\dots,f_l)\big\Vert_{L^p(v_{\vec{w}})}\lesssim \prod_{j=1}^{l}\Vert f_j\Vert_{L^{p_j}(w_j)}$$
 holds for all locally integrable functions $f_1,\dots,f_l$ if and only if
 $$\vec{w}=(w_1,\dots,w_l)\in \mathrm{A}_{\ppp}.$$
 \end{customthm}

 We now extend Theorem \ref{firstmainthm} to multilinear pseudo-differential operators in $\mathrm{Op}\mathbb{M}_lS_{\rho,\rho}^{m}(\bbrn)$.
 For this one, let us define the following $L^r$ version of $\mathbf{M}$ by
 $$\mathbf{M}_r(f_1,\dots,f_l)(x):=\big( \mathbf{M}\big(|f_1|^r,\dots, |f_l|^r \big)(x) \big)^{\frac{1}{r}}.$$
 Then we have the following multi-variable variant of Theorem \ref{firstmainthm}, which is the second main result of this paper.

  \begin{theorem}\label{mainpointest}
 Let $0< \rho<1$, $1< r\le 2$, and $m=-\frac{nl}{r}(1-\rho)$.
 Then every $\sigma\in \mathbb{M}_lS_{\rho,\rho}^{m}(\bbrn)$ satisfies
 \begin{equation}\label{mshrltsi}
 \mathcal{M}^{\sharp}_{{r}/{l}}\big( T_{\sigma}(f_1,\dots,f_l)\big)(x)\lesssim \mathbf{M}_r\big(f_1,\dots,f_l\big)(x), \q \q ~x\in\bbrn
 \end{equation}
for all $f_1,\dots,f_l\in\mathscr{S}(\bbrn)$.
 \end{theorem}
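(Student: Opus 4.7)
The plan is to transplant the strategy used for the linear Theorem \ref{firstmainthm} into the $l$-linear setting, with the multilinear maximal function $\mathbf{M}_r$ replacing the role of $\mathrm{M}_r$. Start from a Littlewood--Paley decomposition $\sigma = \sum_{j \geq 0} \sigma_j$ in the frequency variables, with $\sigma_j$ supported in $\{|(\xi_1,\ldots,\xi_l)| \sim 2^j\}$ for $j \geq 1$ and with the usual adjustment at $j = 0$. A routine integration by parts yields a kernel estimate of the form
\begin{equation*}
|\partial_x^\alpha K_j(x, u_1,\ldots,u_l)| \lesssim 2^{jnl+j(m+\rho|\alpha|)} \prod_{i=1}^l (1 + 2^{j\rho}|x - u_i|)^{-N}
\end{equation*}
for any $N > 0$, reflecting smoothing at scale $2^{-j\rho}$ in each $u_i$ and the loss $2^{j\rho|\alpha|}$ from $x$-derivatives.

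Fix a cube $Q \ni x$ with center $x_Q$ and enlargement $Q^* := 3Q$. Write $f_i = f_i^0 + f_i^\infty$ with $f_i^0 := f_i \mathbf{1}_{Q^*}$, which expands $T_\sigma(\vec f\,)$ into $2^l$ pieces indexed by $\vec\epsilon \in \{0,\infty\}^l$. For the purely local piece ($\vec\epsilon = \vec 0$) I take $c_Q = 0$ and invoke the $L^r \times \cdots \times L^r \to L^{r/l}$ boundedness of $T_\sigma$, which is supplied by Proposition \ref{multilineargerho} since the hypothesis $m = -nl(1-\rho)/r$ is strictly below the threshold $m_\rho(r,\ldots,r) = -n(1-\rho)(l/r - 1/2)$. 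Combined with the elementary inequality $\prod_i (|Q^*|^{-1}\int_{Q^*} |f_i|^r)^{1/r} \leq \mathbf{M}_r(\vec f\,)(x)$, this piece contributes $\lesssim \mathbf{M}_r(\vec f\,)(x)$ to the supremum defining $\mathcal{M}_{r/l}^\sharp T_\sigma(\vec f\,)(x)$.

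For each non-local $\vec\epsilon$ (at least one $\epsilon_i = \infty$) the choice of $c_Q$ and the method split according to cube size. For large cubes $\ell(Q) \geq 1$, set $c_Q = 0$; the tail decay $(1 + 2^{j\rho}|x-u_i|)^{-N}$ together with $|x - u_i| \geq c\ell(Q) \geq c$ for some non-local index $i$ allows absorbing $\sum_j$ and converting to $L^r$-averages over a common enlarged cube, which are dominated by $\mathbf{M}_r(\vec f\,)(x)$. For small cubes $\ell(Q) < 1$ the natural choice is $c_Q := \sum_j T_{\sigma_j}(f_1^{\epsilon_1},\ldots,f_l^{\epsilon_l})(x_Q)$; the difference $T_{\sigma_j}(\vec g\,)(y) - T_{\sigma_j}(\vec g\,)(x_Q)$ is then controlled via the mean value theorem, producing a factor $\ell(Q) \cdot 2^{j\rho}$ from a single $x$-derivative of the kernel.

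The main obstacle is closing the small-cube analysis. The Taylor-type bound above is summable in $j$ only in the low-frequency regime $2^{j\rho}\ell(Q) \lesssim 1$, so for $j$ above the threshold $j \sim (1/\rho)\log_2(1/\ell(Q))$ one must abandon oscillation and use the size estimate combined with the stronger decay $(1 + 2^{j\rho}\ell(Q))^{-N}$ coming from the support condition $|u_i - x| \geq \ell(Q)$ on the non-local factors. The critical choice $m = -nl(1-\rho)/r$ is exactly the value at which the geometric sums in both complementary regimes close, and at which the $(r/l)$-th power integration paired with the $|f_i|^r$ averages lands precisely at $\mathbf{M}_r(\vec f\,)(x)^{r/l}$. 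Additional bookkeeping is needed when several indices have $\epsilon_i = \infty$: one must ensure that the $L^r$-averages extracted for different indices share a \emph{common} cube so that their product can be absorbed into $\mathbf{M}_r(\vec f\,)(x)$ rather than the larger quantity $\prod_i \mathrm{M}_r f_i(x)$.
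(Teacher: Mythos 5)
There is a genuine gap, and it sits exactly where the real difficulty of the theorem lies: the small-cube regime $\ell(Q)<1$. Your scheme cuts the functions at $Q^*=3Q$ and relies on a single frequency threshold $2^{j\rho}\ell(Q)\sim 1$, claiming that the mean value theorem yields a factor $\ell(Q)\,2^{j\rho}$ per $x$-derivative. That factor is wrong: when you differentiate $y\mapsto K_j(y,y-u_1,\dots,y-u_l)$ the derivative also falls on the $u$-slots, where the kernel oscillates at frequency $2^{j}$, so the oscillation gain is $\ell(Q)\,2^{j}$ (this is the content of the third estimate in Lemma \ref{kernelest} and of \eqref{keypropo2_2}, which carry $2^{k}\ell(Q)$, not $2^{k\rho}\ell(Q)$). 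With the correct factor and $\ell(P)\sim\ell(Q)$, the oscillation bound is $2^{j}\ell(Q)\,(2^{j\rho}\ell(Q))^{-(N-nl/r)}$, which at the top of your ``low-frequency'' range $2^{j\rho}\ell(Q)\sim 1$ is of size $\ell(Q)^{1-1/\rho}\to\infty$, while in the intermediate range $1\le 2^{j}\ell(Q)$, $2^{j\rho}\ell(Q)<1$ the size bound is not yet decaying either; so neither regime closes uniformly in $\ell(Q)$, and your assertion that the geometric sums close at the critical $m$ is unfounded. A second, related loss: your kernel estimate is pointwise, and at the critical order $m=-\tfrac{nl}{r}(1-\rho)$ with $r>1$ it is lossy by a factor $2^{jnl(1-\rho)/r'}$ compared with the Hausdorff--Young bounds $\|(1+2^{j\rho}|\cdot|)^{N}K_j\|_{L^{r'}}\lesssim 2^{j(m+nl/r)}$ of Lemma \ref{kernelest}; the paper's pairing of an $L^{r'}$ kernel norm with a joint $L^{r}$ average of $f_1\otimes\cdots\otimes f_l$ is also precisely what produces the common-cube quantity $\mathbf{M}_r(\vec f)(x)$ rather than $\prod_j\mathrm{M}_rf_j(x)$, a point you flag but leave unresolved.

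What is missing, and what the paper supplies, is a mechanism to handle the local part on a cube enlarged to the scale $\ell(Q)^{\rho}$ (not $3Q$): only with $\ell(P)\sim\ell(Q)^{\rho}$ does the oscillation factor become $(2^{k}\ell(Q))^{1-\rho(N-nl/r)}$, summable over $2^{k}\ell(Q)<1$ uniformly in $Q$. But then the trivial $L^{r}\times\cdots\times L^{r}\to L^{r/l}$ bound on the local piece loses $\ell(Q)^{-(1-\rho)nl/r}$, so one needs the Lebesgue-improving estimate $L^{r}\times\cdots\times L^{r}\to L^{r/(l\rho)}$ of Lemma \ref{Naibopropo} (proved via the trace theorem, Lemma \ref{tracethm}), which is available only for $\rho<\tfrac{r}{2l}$; for $\tfrac{r}{2l}\le\rho<1$ the paper must further invoke the rescaled estimates of Lemma \ref{rhoge12} with the auxiliary parameter $\lambda$, split the frequencies into three regimes ($2^{k}\ell(Q)<1$, $2^{k}\ell(Q)\ge 1>2^{\rho k}\ell(Q)$, $2^{\rho k}\ell(Q)\ge 1$), and in the middle regime introduce the intermediate cutoff scale $\ell(Q)^{\rho}(2^{k}\ell(Q))^{-\epsilon}$ together with Proposition \ref{keypropo1}(2). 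None of these ingredients, nor any substitute for them, appears in your proposal; the large-cube case and the local piece at scale $3Q$ that you do treat are the easy parts, so the argument as written does not prove the theorem.
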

 We note that the inequality \eqref{mshrltsi} improves and generalizes the bilinear pointwise estimate in \cite{Na2015} that 
$$\mathscr{M}^{\sharp}\big(T_{\sigma}(f_1,f_2)\big)(x)\lesssim \mathrm{M}_2f_1(x)\mathrm{M}_2f_2(x), \q x\in\bbrn$$
if $0<\rho<\frac{1}{2}$ and $\sigma\in \mathbb{M}_2S_{\rho,\rho}^{-n(1-\rho)}(\bbrn)$.
As a result of Theorem \ref{mainpointest}, we obtain the following $L^{\infty}\times\cdots\times L^{\infty}\to BMO$ estimate, which extends \cite[Theorem 1.1]{Na2015} to $0<\rho<1$ in the multilinear setting.
\begin{corollary}
Let $0<\rho<1$ and $m=-\frac{nl}{2}(1-\rho)$. Suppose that
$\sigma\in \mathbb{M}_lS_{\rho,\rho}^{m}(\bbrn)$.
Then we have
\begin{equation}\label{infbmoest}
\big\Vert T_{\sigma}(f_1,\dots,f_l)\big\Vert_{BMO}\lesssim \prod_{j=1}^{l}\Vert f_j\Vert_{L^{\infty}(\bbrn)}
\end{equation}
for all $f_1,\dots,f_l\in\mathscr{S}(\bbrn)$.
\end{corollary}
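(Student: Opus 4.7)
My plan is to specialize Theorem \ref{mainpointest} to $r=2$; then the hypothesis $m=-(nl/r)(1-\rho)$ coincides with the assumed $m=-(nl/2)(1-\rho)$, and the theorem yields
\begin{equation*}
\mathcal{M}^{\sharp}_{2/l}\bigl(T_\sigma(f_1,\ldots,f_l)\bigr)(x) \lesssim \mathbf{M}_2(f_1,\ldots,f_l)(x), \qquad x\in\bbrn.
\end{equation*}
The right-hand side is trivially controlled: inserting the pointwise bound $\prod_{j=1}^l |f_j(u_j)|^2 \le \prod_{j=1}^l \Vert f_j\Vert_{L^\infty}^2$ into the definition of $\mathbf{M}$ and taking the $1/2$-power gives $\mathbf{M}_2(f_1,\ldots,f_l)(x) \le \prod_{j=1}^l \Vert f_j\Vert_{L^\infty(\bbrn)}$ uniformly in $x$.

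Writing $g:=T_\sigma(f_1,\ldots,f_l)$, the next step is to bound $\Vert g\Vert_{BMO}$ by $\Vert \mathcal{M}^{\sharp}_{2/l}g\Vert_{L^\infty}$. For any cube $Q$ and any $x\in Q$, I claim
\begin{equation*}
\inf_{c\in\bbc}\Big(\frac{1}{|Q|}\int_Q |g(y)-c|^{2/l}\,dy\Big)^{l/2} \le \mathcal{M}^{\sharp}_{2/l}g(x).
\end{equation*}
For $\ell(Q)<1$ this is the definition of the small-cube part of $\mathcal{M}^{\sharp}_{2/l}g$, while for $\ell(Q)\ge 1$ it follows by taking the admissible choice $c=0$ and noting that the resulting average is dominated by the large-cube part of $\mathcal{M}^{\sharp}_{2/l}g(x)$. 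Passing to the supremum over $Q$ and the essential supremum over $x\in\bbrn$ gives
\begin{equation*}
\sup_{Q}\inf_{c\in\bbc}\Big(\frac{1}{|Q|}\int_Q |g(y)-c|^{2/l}\,dy\Big)^{l/2} \le \bigl\Vert \mathcal{M}^{\sharp}_{2/l}g\bigr\Vert_{L^\infty(\bbrn)}.
\end{equation*}

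The one genuinely nontrivial ingredient---and the main obstacle---is the generalized John--Nirenberg (equivalently Str\"omberg) characterization
\begin{equation*}
\Vert g\Vert_{BMO} \sim \sup_{Q}\inf_{c\in\bbc}\Big(\frac{1}{|Q|}\int_Q |g(y)-c|^{s}\,dy\Big)^{1/s}, \qquad 0<s<\infty,
\end{equation*}
which I shall invoke with $s=2/l$. For $l=2$ the required direction reduces to H\"older's inequality; for $l\ge 3$ one has $s<1$ and the direction we need is nontrivial but classical. Chaining the three displays above proves \eqref{infbmoest}.
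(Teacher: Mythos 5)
Your proposal is correct and follows essentially the same route as the paper: specialize Theorem \ref{mainpointest} with $r=2$, bound $\mathbf{M}_2(f_1,\dots,f_l)$ by $\prod_j\Vert f_j\Vert_{L^\infty}$, and pass from $\Vert\cdot\Vert_{BMO}$ to $\Vert\mathcal{M}^{\sharp}_{2/l}(\cdot)\Vert_{L^\infty}$ via the Str\"omberg characterization of $BMO$ with exponent $2/l<1$, which is exactly the norm equivalence the paper cites.
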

To achieve the inequality \eqref{infbmoest}, we recall the $BMO$ characterization
$$
\|f\|_{BMO} \sim_p \sup_{Q}\inf_{c_Q}
\left( \frac{1}{|Q|}\int_Q |f(x)-c_Q|^p\, dx \right)^{\frac{1}{p}}, \qq 0<p<\infty.
$$
See \cite[page 517]{Str1979} especially for $0<p<1$. 
By applying the above norm equivalence, we have
$$  \big\Vert T_{\sigma}(f_1,\dots,f_l)\big\Vert_{BMO} \lesssim   \big\Vert  \mathcal{M}^{\sharp}_{{2}/{l}}\big( T_{\sigma}(f_1,\dots,f_l)\big)    \big\Vert_{L^{\infty}(\bbrn)}$$
and then this can be further estimated, via Theorem \ref{mainpointest} and the $L^{\infty}$ boundedness of $\mathrm{M}_2$, by a constant times
$$\big\Vert  \mathbf{M}_2(f_1,\dots,f_l)   \big\Vert_{L^{\infty}(\bbrn)}\le \big\Vert \mathrm{M}_2f_1\cdots \mathrm{M}_2f_l\big\Vert_{L^{\infty}(\bbrn)}\le \prod_{j=1}^{l}\Vert f_j\Vert_{L^{\infty}(\bbrn)},$$
as desired.

 \hfill

 As an application of Theorem \ref{mainpointest}, we obtain weighted estimates for multilinear pseudo-differential operators in $\mathrm{Op}\mathbb{M}_lS_{\rho,\rho}^{m}(\bbrn)$.  
  \begin{theorem}\label{mainthm}
 Let $1< r\le 2$ and $r<p_1,\dots,p_l<\infty$ with $1/p=1/p_1+\dots+1/p_l$. 
 Suppose that $0< \rho<1$ and $\sigma\in \mathbb{M}_lS_{\rho,\rho}^{-\frac{nl}{r}(1-\rho)}(\bbrn)$.
If an $l$-tuple of weights $\vec{w}=(w_1,\dots, w_l)$ belongs to the class $A_{\frac{p_1}{r},\dots,\frac{p_l}{r}}$, then the weighted norm inequality
$$\big\Vert T_{\sigma}(f_1,\dots,f_l)\big\Vert_{L^p(v_{\vec{w}})}\lesssim \prod_{j=1}^{l}\Vert f_j\Vert_{L^{p_j}(w_j)}$$
holds 
for $f_1,\dots,f_l\in \mathscr{S}(\bbrn)$.
 \end{theorem}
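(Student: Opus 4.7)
The plan is to chain three ingredients in direct analogy with the linear calculation \eqref{tsifpwflpw}: a weighted Fefferman--Stein inequality applied to $T_{\sigma}(f_1,\dots,f_l)$, the pointwise sharp maximal bound of Theorem~\ref{mainpointest}, and the multi-linear maximal characterization of Theorem~\ref{keylemma}.

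The first step is to verify $v_{\vec{w}}\in A_{\infty}$. Setting $q_j:=p_j/r$ and $1/q:=\sum_{j=1}^{l}1/q_j=r/p$, the hypothesis $\vec{w}\in A_{p_1/r,\dots,p_l/r}$ combined with the embedding result of \cite{Le_Om_Pe_To_Tr2009} (namely $v_{\vec{w}}\in A_{lq}$) yields $v_{\vec{w}}\in A_{lp/r}\subset A_{\infty}$. Moreover, since each $p_j>r$, one has $1/p=\sum_{j=1}^{l}1/p_j<l/r$, and hence $pl/r>1$. Inserting $|T_{\sigma}(f_1,\dots,f_l)|^{r/l}$ into the classical weighted Fefferman--Stein inequality for $A_{\infty}$ weights at exponent $pl/r$, and taking the $(l/r)$-th root, then produces
\begin{equation*}
\Vert T_{\sigma}(f_1,\dots,f_l)\Vert_{L^p(v_{\vec{w}})}\lesssim \big\Vert \mathcal{M}^{\sharp}_{r/l}\big(T_{\sigma}(f_1,\dots,f_l)\big)\big\Vert_{L^p(v_{\vec{w}})},
\end{equation*}
where \eqref{twomaxrel} allows passage from the homogeneous to the inhomogeneous sharp operator. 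Theorem~\ref{mainpointest} then dominates the right-hand side pointwise by $\mathbf{M}_r(f_1,\dots,f_l)$, and applying Theorem~\ref{keylemma} to the tuple $(|f_1|^r,\dots,|f_l|^r)$ with exponents $(p_j/r)_{j=1}^{l}$ and weights $\vec{w}$ gives
\begin{equation*}
\Vert \mathbf{M}_r(f_1,\dots,f_l)\Vert_{L^p(v_{\vec{w}})}=\big\Vert \mathbf{M}(|f_1|^r,\dots,|f_l|^r)\big\Vert_{L^{p/r}(v_{\vec{w}})}^{1/r}\lesssim \prod_{j=1}^{l}\Vert f_j\Vert_{L^{p_j}(w_j)},
\end{equation*}
which closes the chain.

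The principal technical point is the a priori finiteness of $\Vert T_{\sigma}(f_1,\dots,f_l)\Vert_{L^p(v_{\vec{w}})}$ needed to invoke the weighted Fefferman--Stein inequality. For $f_j\in\mathscr{S}(\bbrn)$, repeated integration by parts in the $\xi_j$ variables --- legitimate because $\rho<1$ supplies useful decay of $\partial_{\xi_j}^{\beta_j}\sigma$ in $|\xi_j|$ --- shows that $T_{\sigma}(f_1,\dots,f_l)(x)$ decays faster than any polynomial in $|x|$, and since an $A_{\infty}$ weight grows at most polynomially this ensures $T_{\sigma}(f_1,\dots,f_l)\in L^p(v_{\vec{w}})$. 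An alternative is to truncate $\sigma$ to a symbol in $\mathbb{M}_{l}S^{-\infty}_{1,0}(\bbrn)$, carry out the above estimate with uniform constants, and pass to the limit using Proposition~\ref{multilineargerho} with a small $\varepsilon$-drop in the smoothness index.
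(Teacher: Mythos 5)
Your proposal is correct and follows essentially the same route as the paper: the paper likewise deduces $v_{\vec{w}}\in A_{lp/r}$ from \cite[Theorem 3.6]{Le_Om_Pe_To_Tr2009}, applies a weighted Fefferman--Stein inequality (via the dyadic maximal function and \cite[Lemma 7.10]{Duan}) to $|T_{\sigma}(f_1,\dots,f_l)|^{r/l}$ at exponent $lp/r>1$, uses the elementary bound $\mathcal{M}^{\sharp}(|f|^{r/l})\lesssim(\mathcal{M}^{\sharp}_{r/l}f)^{r/l}$ together with \eqref{twomaxrel}, and then concludes with Theorem \ref{mainpointest} and Theorem \ref{keylemma}, handling the a priori finiteness exactly as you do, by noting that $T_{\sigma}(f_1,\dots,f_l)$ is rapidly decreasing (indeed Schwartz) for Schwartz inputs. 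The only cosmetic difference is that you quote the Fefferman--Stein inequality generically rather than through the dyadic maximal operator, which changes nothing of substance.
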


 \hfill
 
{\bf Notation.}
Let $\bbn$ be the set of all natural numbers and $\bbn_0:=\bbn\cup \{0\}$.
We denote by $\chi_U$ the characteristic function of a set $U$.
For each cube $Q$ in $\bbrn$, let $\ell(Q)$ denote the side-length of $Q$.

 \hfill
 
 {\bf Organization.}
The linear operator will be studied only in Section \ref{linearproof}, including the proof of Theorem \ref{firstmainthm}.
 In the remaining sections we focus on multilinear operators.
Indeed, in Section \ref{decompsection} we set up the decomposition of multilinear pseudo-differential operators on which the proofs of Proposition \ref{multilineargerho} and Theorem \ref{mainpointest} are based.  The proof of Proposition \ref{multilineargerho} will be given in Section \ref{thm12pf}. We present some important multilinear estimates which will play a crucial role in the proof of Theorem \ref{mainpointest} in Section \ref{mefmosec} and by using those we completely prove Theorem \ref{mainpointest} in Section \ref{thm14pf}. The last section is devoted to the proof of Theorem \ref{mainthm}.

 \section{Linear Estimates : Proof of Theorem \ref{firstmainthm}}\label{linearproof}
 
 Let $\phi$ be a Schwartz function on $\bbrn$ such that its Fourier transform $\wh{\phi}$ is equal to $1$ on the unit ball centered at the origin and is supported in the ball of radius $2$.
 Let $\psi\in\mathscr{S}(\bbrn)$ satisfy $\wh{\psi}(\xi):=\wh{\phi}(\xi)-\wh{\phi}(2\xi)$ for $\xi\in \bbrn$.
 For each $k\in\bbn$, we define $\psi_k(x):=2^{kn}\psi(2^k x)$ for $x\in \bbrn$. Then $\{\phi\}\cup \{\psi_k\}_{k\in\bbn}$ forms inhomogeneous Littlewood-Paley partition of unity. Note that
 $$\supp(\wh{\psi_k})\subset \big\{\xi\in \bbrn : 2^{k-1}\le |\xi|\le 2^{k+1}\big\}, \q k\in\bbn$$
 and
 \begin{equation}\label{linearLPdecom}
 \wh{\phi}(\xi)+\sum_{k\in\bbn}\wh{\psi_k}(\xi)=1.
\end{equation}
Using \eqref{linearLPdecom}, we decompose  $\sigma \in S_{\rho,\rho}^{m}(\bbrn)$ as
$$\sigma(x,\xi)=\sigma(x,\xi)\wh{\phi}(\xi)+\sum_{k\in\bbn}\sigma(x,\xi)\wh{\psi_k}(\xi)=:\sigma_0(x,\xi)+\sum_{k\in\bbn}\sigma_k(x,\xi), \q x, \xi\in\bbrn$$
 and then express
 \begin{equation}\label{linsigmadecom}
 T_{\sigma}f=\sum_{k\in\bbn_0}T_{\sigma_k}f
 \end{equation}
 where $T_{\sigma_k}$ are the (linear) pseudo-differential operators associated with $\sigma_k\in S_{\rho,\rho}^{m}(\bbrn)$. We observe that for all multi-indices $\alpha,\beta\in (\bbn_0)^n$
 $$\big| \partial_x^{\alpha}\partial_{\xi}^{\beta}\sigma_k(x,\xi)\big|\lesssim_{\alpha,\beta}\big( 1+|\xi|\big)^{m-\rho(|\beta|-|\alpha|)}$$
 where the constant in the inequality is independent of $k$.
 Now we write
 $$T_{\sigma_k}f(y)=\int_{\bbrn}  K_k(y,y-u)f(u)    \; du$$
 where 
 \begin{equation}\label{kkyudef}
 K_k(y,u):=\int_{\bbrn} \sigma_k(y,\xi)e^{2\pi i\langle u,\xi\rangle}    \; d\xi.
 \end{equation}
 Then we first prove the following kernel estimate.
 \begin{lemma}\label{linearkeyest}
 Let $0\le \rho<1$ and $m\in\bbr$. Suppose that $\sigma\in S_{\rho,\rho}^m(\bbrn)$ and let $K_k$ be defined as in \eqref{kkyudef}.
 For arbitrary nonnegative $N\ge 0$ and $1\le r\le 2$, we have
 \begin{align*}
 \Big\Vert \big(1+2^{k\rho}|u|\big)^NK_k(y,u)\Big\Vert_{L^{r'}(u \in \bbrn)}&\lesssim_N 2^{k(m+\frac{n}{r})},\\
  \Big\Vert \big(1+2^{k\rho}|u|\big)^N\nabla_{y}K_k(y,u)\Big\Vert_{ L^{r'}(u \in \bbrn)}&\lesssim_N 2^{k(\rho+m+\frac{n}{r})},\\
   \Big\Vert \big(1+2^{k\rho}|u|\big)^N\nabla_{u}K_k(y,u)\Big\Vert_{ L^{r'}(u \in \bbrn)}&\lesssim_N 2^{k(1+m+\frac{n}{r})}
 \end{align*}
 uniformly in $y\in\bbrn$, where $\nabla_y$ and $\nabla_u$ denote the gradient operators with respect to $y$ and $u$, respectively.
 \end{lemma}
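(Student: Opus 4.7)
The plan is to exploit the frequency localization of $\sigma_k$ together with the Hausdorff--Young inequality. The key observation is that, because $\sigma_k(y,\xi) = \sigma(y,\xi)\widehat{\psi_k}(\xi)$ (with the obvious modification for $k=0$), the function $\sigma_k(y,\cdot)$ is supported in $\{|\xi| \sim 2^k\}$, and on this support the $S_{\rho,\rho}^m$ estimate stated in the excerpt gives
$$\big| \partial_\xi^{\beta} \sigma_k(y,\xi)\big| \lesssim 2^{k(m-\rho|\beta|)}$$
uniformly in $k$ and $y$, for each multi-index $\beta$.

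Step 1 (unweighted case $N=0$). Since $K_k(y,\cdot) = \mathcal{F}^{-1}[\sigma_k(y,\cdot)]$ (up to sign conventions), Hausdorff--Young for $1\le r\le 2$ yields
$$\big\Vert K_k(y,\cdot)\big\Vert_{L^{r'}(\bbrn)} \lesssim \big\Vert \sigma_k(y,\cdot)\big\Vert_{L^{r}(\bbrn)} \lesssim 2^{km}\cdot\big|\{|\xi|\sim 2^k\}\big|^{1/r} \sim 2^{k(m+n/r)},$$
which is the first estimate with $N=0$.

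Step 2 (weighted case). Using $(1+a)^{2M}\lesssim (1+a^2)^M$, it suffices to treat $N=2M$ even. Then from the identity
$$\big(1+2^{2k\rho}|u|^2\big)^M e^{2\pi i\langle u,\xi\rangle} = \Big(1-\tfrac{2^{2k\rho}}{(2\pi)^2}\Delta_{\xi}\Big)^M e^{2\pi i\langle u,\xi\rangle},$$
integration by parts in $\xi$ gives
$$\big(1+2^{2k\rho}|u|^2\big)^M K_k(y,u) = \int_{\bbrn} \Big(1-\tfrac{2^{2k\rho}}{(2\pi)^2}\Delta_{\xi}\Big)^M \sigma_k(y,\xi)\cdot e^{2\pi i\langle u,\xi\rangle}\, d\xi.$$
Each application of $2^{2k\rho}\Delta_\xi$ on $\sigma_k$ costs a factor $2^{2k\rho}\cdot 2^{-2k\rho}=1$ on the support $|\xi|\sim 2^k$, so the resulting symbol is again bounded by $2^{km}\mathbf{1}_{\{|\xi|\lesssim 2^k\}}$. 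A second application of Hausdorff--Young now gives the first bound.

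Step 3 (the two gradient estimates). For $\nabla_y K_k$, the gradient falls on $\sigma_k$ and produces a symbol of order $m+\rho$, i.e.\ an additional factor of $(1+|\xi|)^{\rho}\lesssim 2^{k\rho}$ on the support; repeating Steps 1--2 with this new symbol gives the extra $2^{k\rho}$. For $\nabla_u K_k$, differentiating under the integral pulls out $2\pi i\xi$, contributing a factor $|\xi|\lesssim 2^{k}$ on the support; again Steps 1--2 apply verbatim with this extra factor of $2^k$. Throughout, derivatives of $\widehat{\psi_k}(\xi)=\widehat{\psi}(\xi/2^k)$ provide only the correct gains $2^{-k|\alpha|}$, so they do not affect the bounds.

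The argument is essentially routine; the only point demanding care is uniformity in $k$ of the $S_{\rho,\rho}^m$ seminorms of $\sigma_k$ (which is recorded in the paragraph preceding the lemma) and the consistent bookkeeping of powers of $2^k$ arising from the three places --- the symbol order $m$, the localization $|\xi|\sim 2^k$ giving $|\{|\xi|\sim 2^k\}|^{1/r}\sim 2^{kn/r}$, and the gradients $\nabla_y$ or $\nabla_u$. No genuine obstacle arises.
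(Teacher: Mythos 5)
Your proposal is correct and follows essentially the same route as the paper: uniform frequency-localized estimates $|\partial_\xi^\beta\sigma_k(y,\xi)|\lesssim 2^{k(m-\rho|\beta|)}\chi_{\{1+|\xi|\sim 2^k\}}$ (and their analogues for $\nabla_y\sigma_k$ and $\xi\,\sigma_k$), conversion of the weight $(1+2^{k\rho}|u|)^N$ into $\xi$-derivatives, and the Hausdorff--Young inequality. The only cosmetic difference is that you integrate by parts with $\bigl(1-c\,2^{2k\rho}\Delta_\xi\bigr)^M$ while the paper uses monomial weights $(2^{k\rho}u)^\beta$; the bookkeeping of powers of $2^k$ is identical.
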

 \begin{proof}
  The bilinear case with $r=2$ is already discussed in \cite[page 2757]{Mi_To2020} and the proof is almost the same by using the Hausdorff-Young inequality. 
 Indeed, since $\sigma_k\in S_{\rho,\rho}^{m}(\bbrn)$,
 \begin{equation}\label{paxibesikest1}
 \big|\partial_{\xi}^{\beta}\sigma_k(y,\xi) \big|\lesssim 2^{k(m-\rho|\beta|)}\chi_{\{ 1+|\xi|\sim 2^k\}}.
 \end{equation}
 Then the Hausdorff-Young inequality yields that
 \begin{align*}
 \big\Vert \big(2^{k\rho}u  \big)^{\beta}K_k(y,u)\big\Vert_{L^{r'}(u \in \bbrn)}&\lesssim 2^{k\rho |\beta|}\big\Vert \partial_{\xi}^{\beta}\sigma_k(y,\xi)\big\Vert_{ L^{r}(\xi \in \bbrn)}  \\ &\sim 2^{k\rho |\beta|} 2^{k(m-\rho |\beta|)}2^{\frac{kn}{r}}=2^{k(m+\frac{n}{r})}
 \end{align*}
 from which the first assertion follows. The remaining inequalities can be also derived from the estimates
\begin{equation}\label{paxibesikest2}
 \begin{aligned}
  \big|\partial_{\xi}^{\beta}\nabla_y\sigma_k(y,\xi) \big|&\lesssim 2^{k(m+\rho-\rho |\beta|)}\chi_{\{1+|\xi| \sim 2^k\}}\\
    \big|\partial_{\xi}^{\beta} \big( \xi \,\sigma_k(y,\xi)\big) \big|&\lesssim 2^{k(m+1-\rho |\beta|)}\chi_{\{1+|\xi| \sim 2^k\}}
 \end{aligned}
 \end{equation}
 in the same way.
 \end{proof}

 \subsection{Key Estimates}
 
 Let $P$ be the concentric dilate of $Q$ with $\ell(P)\ge 10\sqrt{n} \ell(Q)$ and $\chi_P$ denote the characteristic function of $P$.
 Now we divide 
 \begin{equation*}
 f= f\chi_P+f\chi_{P^c}=:f^{(0)}+f^{(1)} 
 \end{equation*}
 so that $$T_{\sigma_k}f=T_{\sigma_k}f^{(0)}+T_{\sigma_k}f^{(1)}.$$
Then the following propositions are key estimates which will be repeatedly used in the proof of Theorem \ref{firstmainthm}.
 \begin{proposition}\label{linearkeypropo1}
Let $0< \rho<1$, $1< r\le 2$, and $m=-\frac{n}{r}(1-\rho)$. Suppose that $x\in Q$.
\begin{enumerate}
\item Suppose that $0< \rho< \frac{r}{2}$ and $k\in\bbn_0$.
Then every $\sigma\in S_{\rho,\rho}^{m}(\bbrn)$ satisfies
 $$\bigg(\frac{1}{|Q|}\int_Q \big| T_{\sigma_k}f^{(0)}(y)\big|^r \; dy\bigg)^{\frac{1}{r}}\lesssim \Big(\frac{\ell(P)}{\ell(Q)^{\rho}} \Big)^{\frac{ n}{r}}\mathrm{M}_rf(x).$$

\item 
Suppose that $\frac{r}{2}\le \rho <1$, $\frac{2\rho -r}{2-r}<\lambda<\rho$, and $k\in\bbn_0$.
 Then every $\sigma\in S_{\rho,\rho}^{m}(\bbrn)$ satisfies
 $$\bigg(\frac{1}{|Q|}\int_Q \big| T_{\sigma_k}f^{(0)}(y)\big|^r \; dy \bigg)^{\frac{1}{r}}\lesssim \big( 2^k \ell(Q)\big)^{\lambda n \frac{1-\rho}{r(1-\lambda)} }\Big(\frac{\ell(P)}{\ell(Q)^{\rho}} \Big)^{\frac{n}{r}}\mathrm{M}_rf(x).$$
 \end{enumerate}
 \end{proposition}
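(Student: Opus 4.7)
My plan is to reduce both parts of the proposition to a case split on the magnitude of $2^k\ell(Q)$, and then combine a direct kernel bound with an $L^2$-estimate for $T_{\sigma_k}$. First I would apply H\"older's inequality in $u$ to
$$T_{\sigma_k}f^{(0)}(y)=\int_{\bbrn}K_k(y,y-u)\,f(u)\chi_P(u)\,du$$
using the weight $(1+2^{k\rho}|y-u|)^N$ for $N$ sufficiently large. Lemma \ref{linearkeyest} with $m=-\frac{n}{r}(1-\rho)$ controls the first factor by $2^{kn\rho/r}$, while $\|f\chi_P\|_{L^r(\bbrn)}\le\ell(P)^{n/r}\,\mathrm{M}_rf(x)$ handles the second factor since $x\in Q\subset P$. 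This produces the baseline pointwise estimate
$$\|T_{\sigma_k}f^{(0)}\|_{L^{\infty}(\bbrn)}\lesssim 2^{kn\rho/r}\,\ell(P)^{n/r}\,\mathrm{M}_rf(x),$$
which settles both parts of the proposition whenever $2^k\ell(Q)\le 1$: indeed, then $2^{kn\rho/r}\le\ell(Q)^{-n\rho/r}$, and the additional factor $(2^k\ell(Q))^{\lambda n(1-\rho)/(r(1-\lambda))}$ appearing in Part~(2) is also $\le 1$.

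In the remaining regime $2^k\ell(Q)>1$, the baseline bound is too crude, and I would invoke an $L^2$-estimate. Because $2^{-km}\sigma_k\in S_{\rho,\rho}^{0}$ is frequency-localized at $|\xi|\sim 2^k$, Fefferman's theorem (Theorem~\ref{thma}) yields $\|T_{\sigma_k}\|_{L^2\to L^2}\lesssim 2^{km}=2^{-kn(1-\rho)/r}$. Combined with H\"older on $Q$, this gives
$$\|T_{\sigma_k}f^{(0)}\|_{L^r(Q)}\le|Q|^{\frac{1}{r}-\frac{1}{2}}\|T_{\sigma_k}f^{(0)}\|_{L^2(\bbrn)}\lesssim \ell(Q)^{n(\frac{1}{r}-\frac{1}{2})}\,2^{-kn(1-\rho)/r}\,\|f^{(0)}\|_{L^2(\bbrn)}.$$
The hard part will be that $\|f^{(0)}\|_{L^2}$ cannot be bounded by $\mathrm{M}_rf(x)$ alone when $r<2$, since on a bounded set $L^r$ does not embed into $L^2$. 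To get around this, I would truncate by level sets: split $f^{(0)}=f^{(0)}\chi_{|f|\le M}+f^{(0)}\chi_{|f|>M}$ at a threshold $M$ to be chosen, so that the low piece satisfies $\|f^{(0)}\chi_{|f|\le M}\|_{L^2(\bbrn)}\lesssim M^{1-r/2}\,\ell(P)^{n/2}\,(\mathrm{M}_rf(x))^{r/2}$, while the high piece is supported on a set of measure $\lesssim\ell(P)^{n}(\mathrm{M}_rf(x))^{r}M^{-r}$ and is controlled by the baseline $L^{\infty}$-bound followed by H\"older on $Q$.

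To finish, I would optimize $M$, arriving at a bound of the form $(2^k\ell(Q))^{\alpha}\bigl(\ell(P)/\ell(Q)^{\rho}\bigr)^{n/r}\mathrm{M}_rf(x)$ for an exponent $\alpha=\alpha(\rho,r,\lambda)\ge 0$. For Part~(1), the hypothesis $\rho<\frac{r}{2}$ is exactly what lets the optimization close with $\alpha=0$, giving a bound uniform in $k$. For Part~(2), where $\rho\ge\frac{r}{2}$ prevents $\alpha=0$, I would anchor $M$ to the parameter $\lambda$ so that $\alpha=\frac{\lambda n(1-\rho)}{r(1-\lambda)}$; the lower bound $\lambda>\frac{2\rho-r}{2-r}$ is the threshold at which the low-part $L^2$-estimate balances the high-part H\"older estimate at the optimal $M$, while $\lambda<\rho$ ensures $\alpha<\frac{n\rho}{r}$, so that the resulting exponent strictly improves on the naive bound. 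The principal technical point is this balancing argument, where the algebraic condition on $\lambda$ is forced by matching the $L^2$-gain $2^{-kn(1-\rho)/r}$ against the truncation loss $M^{1-r/2}$ and the $L^r$-control coming from $\mathrm{M}_rf(x)$.
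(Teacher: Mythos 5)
Your kernel-based baseline bound and the $L^2$ step are fine, but the optimization over the truncation level $M$ cannot reach the stated estimates, and this is a genuine gap in the main regime $2^k\ell(Q)>1$. Splitting $f^{(0)}$ at level $M$, using $\Vert T_{\sigma_k}\Vert_{L^2\to L^2}\lesssim 2^{km}$ for the low piece and a kernel bound $\Vert T_{\sigma_k}\Vert_{L^s\to L^\infty}\lesssim 2^{k(m+n/s)}$ ($1\le s<r$, via Lemma \ref{linearkeyest} and H\"older) for the high piece is just a hands-on real interpolation between these two endpoints; optimizing in $M$ (and in $s$) yields at best $\Vert T_{\sigma_k}\Vert_{L^r\to L^{r'}}\lesssim 2^{k\frac{n}{r}(1+\rho-r)}$, and after H\"older on $Q$ this gives
\begin{equation*}
\bigg(\frac{1}{|Q|}\int_Q\big|T_{\sigma_k}f^{(0)}(y)\big|^r\,dy\bigg)^{\frac1r}\lesssim \big(2^k\ell(Q)\big)^{\frac{n}{r}(1+\rho-r)}\Big(\frac{\ell(P)}{\ell(Q)^{\rho}}\Big)^{\frac nr}\mathrm{M}_rf(x).
\end{equation*}
The exponent $\frac{n}{r}(1+\rho-r)$ is nonpositive only when $\rho\le r-1$. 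Hence for $r<2$ your argument does not close Part (1) in the range $r-1<\rho<\frac r2$ (the factor $(2^k\ell(Q))^{\frac nr(1+\rho-r)}$ is unbounded in $k$), and it cannot give Part (2) for all admissible $\lambda$: as $\lambda\downarrow\frac{2\rho-r}{2-r}$ the allowed exponent $\frac{\lambda n(1-\rho)}{r(1-\lambda)}$ tends to $\frac nr\big(\rho-\frac r2\big)$, which is strictly smaller than $\frac nr(1+\rho-r)$ whenever $r<2$; no choice of $M$ can beat this interpolation ceiling. (Only at $r=2$ does the scheme suffice, and there no truncation is needed.)

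The missing ingredient is precisely Lemma \ref{MiYapropo} (Miyachi--Yabuta): the uniform $L^r\to L^{r/\rho}$ bound for $\rho\le\frac r2$, which is strictly stronger than what interpolating the Calder\'on--Vaillancourt $L^2$ bound with Hausdorff--Young kernel bounds can produce; the paper proves Part (1) by one application of this lemma plus H\"older on $Q$. For Part (2) the paper rescales, $c_k(x,\xi)=\sigma_k(2^{-\lambda k}x,2^{\lambda k}\xi)$, so that $c_k\in S^{m/(1-\lambda)}_{\rho',\rho'}$ uniformly in $k$ with $\rho'=\frac{\rho-\lambda}{1-\lambda}$; the hypothesis $\lambda>\frac{2\rho-r}{2-r}$ is exactly $\rho'<\frac r2$, so Lemma \ref{MiYapropo} applies to $c_k$, and undoing the dilation produces the factor $2^{\lambda nk\frac{1-\rho}{r(1-\lambda)}}$ (Lemma \ref{linearrhoge12}). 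Thus the threshold on $\lambda$ comes from the range of validity of the Miyachi--Yabuta estimate, not from balancing the $L^2$ gain against the truncation loss, so the mechanism you propose for producing the exponent is not the right one. A smaller slip: in the regime $2^k\ell(Q)\le1$ your justification of Part (2), namely that the extra factor is $\le1$, is backwards (that makes the target smaller, hence harder); the claim is still true there because the baseline actually carries the factor $(2^k\ell(Q))^{\frac{n\rho}{r}}$ and $\frac{n\rho}{r}\ge\frac{\lambda n(1-\rho)}{r(1-\lambda)}$ for $\lambda\le\rho$, but this is the argument that needs to be written.
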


 \begin{proposition}\label{linearkeypropo2}
 Let $0\le \rho<1$, $1\le r\le 2$, $m=-\frac{n}{r}(1-\rho)$, and $k\in\bbn_0$. Suppose that $x,y\in Q$.
 Then $\sigma\in S_{\rho,\rho}^{m}(\bbrn)$ satisfies
 \begin{equation}\label{linearkeypropo2_1}
 \big| T_{\sigma_k}f^{(1)}(y)\big|\lesssim_N \big( 2^{k\rho} \ell(P)\big)^{-(N-\frac{n}{r})}\mathrm{M}_rf(x)
  \end{equation}
and
\begin{align}\label{linearkeypropo2_2}
\big| T_{\sigma_k}f^{(1)}(y)-T_{\sigma_k}f^{(1)}(x)\big| \lesssim_N 2^k\ell(Q)\big( 2^{k\rho} \ell(P)\big)^{-(N-\frac{n}{r})}\mathrm{M}_rf(x)
\end{align}
for any $N>\frac{n}{r}$.
\end{proposition}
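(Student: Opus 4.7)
Both inequalities follow the same scheme: for $u\in P^c$ and $x,y\in Q$ with $\ell(P)\ge 10\sqrt{n}\,\ell(Q)$ one has $|y-u|,|x-u|\gtrsim \ell(P)$, and this spatial separation couples with the weighted kernel bounds of Lemma \ref{linearkeyest} to convert $\xi$-derivatives of $\sigma_k$ into spatial decay of $K_k$ away from the diagonal.

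For \eqref{linearkeypropo2_1}, I would write $T_{\sigma_k}f^{(1)}(y)=\int_{P^c}K_k(y,y-u)f(u)\,du$ and decompose $P^c$ into annular shells $A_j:=\{u\in P^c:2^j\ell(P)\le |y-u|<2^{j+1}\ell(P)\}$, $j\ge 0$. On each $A_j$, H\"older's inequality in $u$ with exponents $r'$ and $r$ combined with the weight $(1+2^{k\rho}|y-u|)^N$ gives
$$\int_{A_j}|K_k(y,y-u)f(u)|\,du \le \frac{\big\Vert(1+2^{k\rho}|\cdot|)^{N}K_k(y,\cdot)\big\Vert_{L^{r'}}}{(1+2^{k\rho}2^j\ell(P))^{N}}\,\Vert f\chi_{A_j}\Vert_{L^r}.$$
Lemma \ref{linearkeyest} bounds the numerator by $2^{k(m+n/r)}=2^{kn\rho/r}$ (since $m=-\frac{n}{r}(1-\rho)$), and the inequality $|x-y|\le \sqrt{n}\,\ell(Q)\le \ell(P)/10$ places $A_j$ inside a ball about $x$ of radius $\sim 2^j\ell(P)$, whence $\Vert f\chi_{A_j}\Vert_{L^r}\le C(2^j\ell(P))^{n/r}\mathrm{M}_rf(x)$. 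Summing the resulting geometric series in $j$ (convergent because $N>n/r$) yields $(2^{k\rho}\ell(P))^{-(N-n/r)}\mathrm{M}_rf(x)$, which is \eqref{linearkeypropo2_1}.

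For \eqref{linearkeypropo2_2}, I would telescope
$$K_k(y,y-u)-K_k(x,x-u)=\bigl[K_k(y,y-u)-K_k(x,y-u)\bigr]+\bigl[K_k(x,y-u)-K_k(x,x-u)\bigr]$$
and apply the mean value theorem to each bracket, bounding it by $|y-x|$ times a supremum of $\nabla_y K_k$ or $\nabla_u K_k$ along the relevant line segment. Since $|y-x|\le\sqrt n\,\ell(Q)$ and Lemma \ref{linearkeyest} yields weighted $L^{r'}(u)$-bounds $\lesssim 2^{k(\rho+n\rho/r)}$ and $\lesssim 2^{k(1+n\rho/r)}$ for $\nabla_yK_k$ and $\nabla_uK_k$ respectively, rerunning the annular-H\"older computation with these improved norms (and observing that the $2^k$ factor from $\nabla_uK_k$ dominates the $2^{k\rho}$ from $\nabla_yK_k$) produces $2^k\ell(Q)\,(2^{k\rho}\ell(P))^{-(N-n/r)}\mathrm{M}_rf(x)$, as required.

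The only subtle point, which I expect to be the main technical obstacle, is that the geometric-sum computation above is most transparent in the regime $2^{k\rho}\ell(P)\ge 1$; when $2^{k\rho}\ell(P)<1$ the factor $(1+2^{k\rho}2^j\ell(P))^{-N}$ becomes nontrivial only once $2^j\gtrsim (2^{k\rho}\ell(P))^{-1}$, and splitting the annular sum at this threshold shows the total is $O(1)$, which is automatically bounded by $(2^{k\rho}\ell(P))^{-(N-n/r)}$ since $N>n/r$. One also has to verify that the shift from $|y-u|\sim 2^j\ell(P)$ to $|x-u|\sim 2^j\ell(P)$ in the second bracket of the telescoping identity preserves the annular geometry, which is immediate from $|x-y|\lesssim\ell(Q)\ll\ell(P)$.
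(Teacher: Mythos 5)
Your argument is correct and is essentially the paper's proof: both estimates reduce to H\"older's inequality against the weighted kernel bounds of Lemma \ref{linearkeyest}, your annular decomposition of $P^c$ (including the splitting at $2^{k\rho}\ell(P)\lessgtr 1$) being just an unwinding of the paper's direct bound \eqref{linearlpyvnest}, and your telescoping/mean-value treatment of the kernel difference matching the paper's line integral of $\nabla K_k$ along $y(t)=ty+(1-t)x$. The one point to tidy in \eqref{linearkeypropo2_2} is to write the mean-value step as $\int_0^1 (y-x)\cdot\nabla K_k\big(y(t),y(t)-u\big)\,dt$ and pull the $t$-integral out of the $L^{r'}$-norm by Minkowski's inequality (Lemma \ref{linearkeyest} is uniform in the first variable), rather than placing a pointwise supremum over the segment inside the norm, which Lemma \ref{linearkeyest} does not directly control.
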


 Before proving the above propositions, we introduce a useful boundedness property for $T_{\sigma}$ on Lebesgue spaces which will be extended to the multilinear setting in Section \ref{mefmosec} as Lemma \ref{Naibopropo}.
   \begin{lemma}\label{MiYapropo}\cite[Lemma 5.6]{Mi_Ya1987}
  Let $1< r\le 2$, $0< \rho\le \frac{r}{2}$, $\rho<1$, and $m=-\frac{n}{r}(1-\rho)$.
 Then every $\sigma\in S_{\rho,\rho}^m(\bbrn)$ satisfies
 $$\big\Vert T_{\sigma}f \big\Vert_{L^{\frac{r}{\rho}}(\bbrn)}\lesssim \Vert f\Vert_{L^r(\bbrn)}$$
 for $f\in \mathscr{S}(\bbrn)$.
 \end{lemma}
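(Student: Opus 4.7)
The result is Lemma~5.6 of Miyachi and Yabuta~\cite{Mi_Ya1987}, so we content ourselves with sketching how its proof fits into the framework of this section. The plan is to establish, for each piece $T_{\sigma_k}$ in the decomposition $T_{\sigma}=\sum_{k\ge 0}T_{\sigma_k}$ of \eqref{linsigmadecom}, an estimate
$$\big\Vert T_{\sigma_k} f\big\Vert_{L^{r/\rho}(\bbrn)}\lesssim 2^{-k\varepsilon}\Vert f\Vert_{L^r(\bbrn)}$$
with some $\varepsilon=\varepsilon(n,r,\rho)>0$, and then to sum in $k$.

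Two natural endpoint bounds on each piece are readily available from the tools already developed. First, H\"older's inequality applied to the representation $T_{\sigma_k}f(y)=\int_{\bbrn}K_k(y,y-u)f(u)\,du$, together with the kernel estimate $\big\Vert (1+2^{k\rho}|u|)^N K_k(y,\cdot)\big\Vert_{L^{r'}(\bbrn)}\lesssim 2^{k(m+n/r)}$ from Lemma~\ref{linearkeyest}, yields
$$\big\Vert T_{\sigma_k}f\big\Vert_{L^{\infty}(\bbrn)}\lesssim 2^{k(m+n/r)}\Vert f\Vert_{L^{r}(\bbrn)}=2^{kn\rho/r}\Vert f\Vert_{L^{r}(\bbrn)}.$$
Second, applying the Calder\'on-Vaillancourt theorem to the renormalized symbol $2^{-km}\sigma_k$, which lies in $S^{0}_{\rho,\rho}(\bbrn)$ uniformly in $k$, gives
$$\big\Vert T_{\sigma_k}f\big\Vert_{L^{2}(\bbrn)}\lesssim 2^{km}\Vert f\Vert_{L^{2}(\bbrn)}=2^{-kn(1-\rho)/r}\Vert f\Vert_{L^{2}(\bbrn)}.$$

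The main obstacle is the interpolation step leading to the target pair $(L^r, L^{r/\rho})$. A direct Riesz-Thorin interpolation between these two endpoints only accesses the line segment in the $(1/p,1/q)$-plane from $(1/r,0)$ to $(1/2,1/2)$, and the target point $(1/r,\rho/r)$ lies on this segment exactly when $r=2$; in that case the decay on $L^{2}\to L^{2}$ already yields the result. For $r<2$ we plan to invoke Stein's theorem on interpolation of analytic families of operators, applied to a family $T_{\sigma^{z}}$ obtained by twisting $\sigma$ with a Bessel-potential factor $(1+|\xi|^{2})^{\alpha(z-z_{0})}$, with $\alpha$ and $z_{0}$ chosen so that $\sigma^{z_{0}}=\sigma$ and so that the above endpoint estimates apply on $\operatorname{Re}z=0$ and $\operatorname{Re}z=1$. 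The hypothesis $\rho\le r/2$, equivalent to $r/\rho\ge 2$, is precisely what keeps $(r,r/\rho)$ inside the convex hull of the achievable pairs and forces the interpolated norm to be of the form $2^{-k\varepsilon}$ with a strictly positive $\varepsilon$; summing in $k$ then completes the argument.
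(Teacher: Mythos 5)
There is a genuine gap, and it is not just a technical one in the interpolation step. First, the proposed fix via Stein's analytic interpolation cannot work as described: making the operator vary analytically (twisting $\sigma$ by $(1+|\xi|^2)^{\alpha(z-z_0)}$) changes which operator sits at the interior point, but it does not change the rule for the Lebesgue exponents — with boundary estimates of type $L^r\to L^\infty$ on $\operatorname{Re}z=0$ and $L^2\to L^2$ on $\operatorname{Re}z=1$, the interior pairs are still forced onto the segment joining $(\tfrac1r,0)$ and $(\tfrac12,\tfrac12)$, which, as you yourself observe, misses the target $(\tfrac1r,\tfrac{\rho}{r})$ whenever $r<2$. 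Requiring both $1/p_\theta=1/r$ and $1/q_\theta=\rho/r$ forces $\theta=0$ and $\theta=2\rho/r>0$ simultaneously, a contradiction; so no choice of $\alpha$ and $z_0$ rescues the scheme. Second, and more fundamentally, the overall strategy of proving $\Vert T_{\sigma_k}f\Vert_{L^{r/\rho}}\lesssim 2^{-k\varepsilon}\Vert f\Vert_{L^r}$ and summing cannot succeed at the critical order $m=-\tfrac nr(1-\rho)$: already for $x$-independent symbols $\sigma(\xi)=2^{km}\wh{\psi_k}(\xi)$ one has $\Vert T_{\sigma_k}\Vert_{L^r\to L^{r/\rho}}\sim 2^{km}\,2^{kn(1-\rho)/r}\sim 1$, so the dyadic pieces do not decay in the relevant operator norm and absolute summation diverges. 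Consistently with this, even your $r=2$ case fails: interpolating $L^2\to L^\infty$ (norm $2^{kn\rho/2}$) with $L^2\to L^2$ (norm $2^{km}$) at the target $L^2\to L^{2/\rho}$ gives exponent $(1-\rho)\tfrac{n\rho}{2}+\rho m=0$ exactly, not a negative one, so the $L^2\to L^2$ decay does not "already yield the result."

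The lemma is critical, and the known proof (Miyachi–Yabuta; the paper reproduces the same mechanism in the multilinear Lemma \ref{Naibopropo}) avoids any frequency-by-frequency summation. One conjugates with Bessel potentials: since $n(\tfrac12-\tfrac{\rho}{r})+m+n(\tfrac1r-\tfrac12)=0$, the operator $(I-\Delta)^{\frac n2(\frac12-\frac{\rho}{r})}T_{\sigma}(I-\Delta)^{\frac n2(\frac1r-\frac12)}$ has a symbol in $S^{0}_{\rho,\rho}(\bbrn)$ and is hence $L^2$-bounded by Calder\'on–Vaillancourt; then the Sobolev embeddings $L^r(\bbrn)\hookrightarrow L^2_{-n(\frac1r-\frac12)}(\bbrn)$ (here $r>1$ is needed) and $L^2_{n(\frac12-\frac{\rho}{r})}(\bbrn)\hookrightarrow L^{r/\rho}(\bbrn)$ (here $0<\rho\le \tfrac r2$ enters) give $\Vert T_\sigma f\Vert_{L^{r/\rho}}\lesssim \Vert f\Vert_{L^r}$. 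The essential point your sketch misses is that the endpoint result relies on the $L^2$ almost-orthogonality packaged inside Calder\'on–Vaillancourt, applied to all frequencies at once, rather than on geometric decay of individual Littlewood–Paley blocks.
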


 The first estimate of Proposition \ref{linearkeypropo1} will be a consequence of Lemma \ref{MiYapropo}, but it cannot be directly applied for the second assertion in which the condition $\frac{r}{2}\le \rho<1$ is assumed. So we introduce alternative estimates applicable to the case when $\frac{r}{2}\le \rho<1$.
  \begin{lemma}\label{linearrhoge12}
 Let $1< r<2$, $\frac{r}{2} \le \rho<1$, and $m=-\frac{n}{r}(1-\rho)$.
 Suppose that $k\in\bbn_0$ and
 \begin{equation}\label{2rhomr2mr}
 \frac{2\rho -r}{2-r}<\lambda<\rho.
 \end{equation}
 Then every $\sigma\in S_{\rho,\rho}^{m}(\bbrn)$ satisfies
 \begin{equation}\label{linearrhoge12est}
 \big\Vert T_{\sigma_k}f \big\Vert_{L^{\frac{r(1-\lambda)}{(\rho-\lambda)}}(\bbrn)}\lesssim 2^{\lambda n k \frac{1-\rho}{r(1-\lambda)} }\Vert f\Vert_{L^r(\bbrn)}
 \end{equation}
 for $f\in \mathscr{S}(\bbrn)$.
 \end{lemma}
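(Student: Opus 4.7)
The plan is to prove the slightly stronger estimate
$$\bigl\|T_{\sigma_k}f\bigr\|_{L^q(\bbrn)} \lesssim 2^{kn(\rho/r - 1/q)}\|f\|_{L^r(\bbrn)} \qquad \text{for all } q\in(2,\infty),$$
from which Lemma \ref{linearrhoge12} follows by substituting $q = r(1-\lambda)/(\rho-\lambda)$. A direct computation gives $\rho/r - 1/q = \lambda(1-\rho)/(r(1-\lambda))$, and as $\lambda$ ranges over $\bigl(\tfrac{2\rho-r}{2-r},\rho\bigr)$ the exponent $q$ ranges exactly over $(2,\infty)$; this reparametrisation is why the constraint on $\lambda$ takes the stated form.

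To prove the intermediate estimate I will interpolate between two endpoint bounds for $T_{\sigma_k}$. The first is the $L^r\to L^\infty$ estimate $\|T_{\sigma_k}f\|_{L^\infty}\lesssim 2^{kn\rho/r}\|f\|_{L^r}$, obtained by applying H\"older's inequality to the kernel representation $T_{\sigma_k}f(x) = \int K_k(x,x-y)f(y)\,dy$ and invoking the first assertion of Lemma \ref{linearkeyest} with $N=0$, noting that $m + n/r = n\rho/r$.

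For the second endpoint, namely $\|T_{\sigma_k}f\|_{L^2}\lesssim 2^{kn(\rho/r - 1/2)}\|f\|_{L^r}$, I factor $\sigma_k = 2^{km}\tilde\sigma_k$ so that $\tilde\sigma_k\in S_{\rho,\rho}^0(\bbrn)$ has seminorms uniform in $k$ (the $\xi$-localization $|\xi|\sim 2^k$ on the support of $\sigma_k$ turns the $2^{km}$ decay of $\sigma_k$ into a uniform $S_{\rho,\rho}^0$ bound). The Calder\'on--Vaillancourt theorem then yields $\|T_{\sigma_k}\|_{L^2\to L^2}\lesssim 2^{km} = 2^{-kn(1-\rho)/r}$. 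To replace the $L^2$-norm of $f$ on the right-hand side by its $L^r$-norm, I choose a Schwartz cutoff $\tilde\eta$ that equals $1$ on $\{1/2\le|\xi|\le 2\}$ and is supported slightly larger, define $\tilde\Delta_k f := \mathscr{F}^{-1}[\tilde\eta(\cdot/2^k)\widehat{f}]$ (with an analogous low-frequency choice at $k=0$) so that $T_{\sigma_k}f = T_{\sigma_k}\tilde\Delta_k f$, and apply Bernstein's inequality to the frequency-localized function $\tilde\Delta_k f$ together with the Littlewood--Paley $L^r$-boundedness to obtain $\|\tilde\Delta_k f\|_{L^2}\lesssim 2^{kn(1/r - 1/2)}\|f\|_{L^r}$.

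Finally, the elementary inequality $\|g\|_{L^q}\le \|g\|_{L^2}^{2/q}\|g\|_{L^\infty}^{1-2/q}$ (which follows from H\"older applied to $|g|^q = |g|^{q-2}|g|^2$ for $q\ge 2$) applied to $g = T_{\sigma_k}f$ combines the two endpoint bounds into
$$2^{kn\left[\frac{2}{q}(\rho/r - 1/2) + (1 - \frac{2}{q})\rho/r\right]}\|f\|_{L^r} = 2^{kn(\rho/r - 1/q)}\|f\|_{L^r},$$
as desired. The one non-routine point is securing the sharp $k$-dependence in the Calder\'on--Vaillancourt step via the rescaling $\sigma_k = 2^{km}\tilde\sigma_k$; with this in place, the remaining ingredients (Bernstein, H\"older, and Lemma \ref{linearkeyest}) are standard.
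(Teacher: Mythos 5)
Your proof is correct, but it follows a genuinely different route from the paper. The paper's argument is a dilation trick: it sets $c_k(x,\xi):=\sigma_k(2^{-\lambda k}x,2^{\lambda k}\xi)$, checks that $c_k$ lies in $S_{\frac{\rho-\lambda}{1-\lambda},\frac{\rho-\lambda}{1-\lambda}}^{\frac{m}{1-\lambda}}(\bbrn)$ uniformly in $k$ with the new parameter $\frac{\rho-\lambda}{1-\lambda}<\frac{r}{2}$ precisely because of \eqref{2rhomr2mr}, applies the Miyachi--Yabuta bound of Lemma \ref{MiYapropo} ($L^r\to L^{r/\rho}$ in the small-$\rho$ regime) to $T_{c_k}$, and then undoes the dilation via $T_{\sigma_k}f(x)=T_{c_k}(f(2^{-\lambda k}\cdot))(2^{\lambda k}x)$, which produces the factor $2^{\lambda nk\frac{1-\rho}{r(1-\lambda)}}$. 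You instead prove the frequency-localized estimate $\Vert T_{\sigma_k}f\Vert_{L^q}\lesssim 2^{kn(\rho/r-1/q)}\Vert f\Vert_{L^r}$ for all $q\ge 2$ by interpolating ($\Vert g\Vert_{L^q}\le\Vert g\Vert_{L^2}^{2/q}\Vert g\Vert_{L^\infty}^{1-2/q}$) between an $L^r\to L^\infty$ bound coming from Lemma \ref{linearkeyest} with $N=0$ (Hausdorff--Young) and an $L^r\to L^2$ bound obtained from Calder\'on--Vaillancourt applied to $2^{-km}\sigma_k\in S^0_{\rho,\rho}$ (uniform seminorms, since $1+|\xi|\sim 2^k$ on the support) combined with Bernstein for the fattened projection $\tilde\Delta_k$ satisfying $T_{\sigma_k}=T_{\sigma_k}\tilde\Delta_k$; your exponent computations ($\rho/r-1/q=\lambda(1-\rho)/(r(1-\lambda))$ and the identification of the $\lambda$-range with $q\in(2,\infty)$) are accurate. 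Your approach buys a cleaner and slightly stronger statement (a single $L^r\to L^q$ bound for every $q\ge2$, valid even at the endpoint $\lambda=\frac{2\rho-r}{2-r}$) and bypasses Lemma \ref{MiYapropo} and the rescaling altogether, making transparent that $\lambda$ is just a reparametrization of $q$; the paper's rescaling argument, on the other hand, is the template that transfers verbatim to the multilinear setting (Lemma \ref{rhoge12}), where the small-$\rho$ input is the trace-theorem-based Lemma \ref{Naibopropo} and a direct interpolation substitute is less immediate.
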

 \begin{proof}
 Let $\frac{2\rho -r}{2-r}<\lambda<\rho$.
We define
$$c_k(x,\xi):=\sigma_k(2^{-\lambda k}x, 2^{\lambda k}\xi).$$
 Then for any multi-indices $\alpha,\beta \in (\bbn_0)^n$, we have
  \begin{equation}\label{linearckins}
 \big| \partial_x^{\alpha}\partial_{\xi}^{\beta}c_k(x,\xi)     \big|\lesssim \big( 1+|\xi| \big)^{\frac{m}{1-\lambda}-\frac{\rho-\lambda}{1-\lambda}(|\beta|-|\alpha|)}
 \end{equation} 
 where the constant in the inequality is independent of the index $k$. That is,
 \begin{equation*}
 c_k\in S_{\frac{\rho-\lambda}{1-\lambda},\frac{\rho-\lambda}{1-\lambda}}^{\frac{m}{1-\lambda}}(\bbrn)\q \text{ uniformly in }~k.
 \end{equation*} 
 The proof of \eqref{linearckins} is a straight-forward computation, but we provide it in the appendix for the sake of completeness.
We also observe that 
$$0<\frac{\rho-\lambda}{1-\lambda}<\frac{r}{2} \q \text{ and }\q \frac{m}{1-\lambda}=-\frac{n}{r}\Big(1-\frac{\rho-\lambda}{1-\lambda}\Big)$$
in view of \eqref{2rhomr2mr},
 and thus Lemma \ref{MiYapropo} yields that
 \begin{equation}\label{linearckest}
 \big\Vert T_{c_k}f\big\Vert_{L^{\frac{r(1-\lambda)}{\rho-\lambda}}(\bbrn)}\lesssim \Vert f\Vert_{L^r(\bbrn)}\q \text{uniformly in }~k.
 \end{equation}
Since
$$T_{\sigma_k}f(x)=T_{c_k}\big( f(2^{-\lambda k}\cdot)\big)(2^{\lambda k}x),$$
 we apply \eqref{linearckest} to obtain
 \begin{align*}
 \big\Vert T_{\sigma_k}f\big\Vert_{L^{\frac{r(1-\lambda)}{\rho-\lambda}}(\bbrn)}&=2^{-\lambda nk \frac{\rho-\lambda}{r(1-\lambda)}}\big\Vert T_{c_k}\big(f(2^{-\lambda k}\cdot) \big)\big\Vert_{L^{\frac{r(1-\lambda)}{\rho-\lambda}}(\bbrn)}\\
 &\lesssim 2^{-\lambda nk \frac{\rho-\lambda}{r(1-\lambda)}}\big\Vert f(2^{-\lambda k}\cdot)\big\Vert_{L^r(\bbrn)}\\
 &= 2^{-\lambda nk \frac{\rho-\lambda}{r(1-\lambda)}} 2^{\lambda n k\frac{1}{r}}\Vert f\Vert_{L^r(\bbrn)}\\
 &=2^{\lambda n k \frac{1-\rho}{r(1-\lambda)} } \Vert f\Vert_{L^r(\bbrn)},
 \end{align*}
 as desired.
 \end{proof}

 Now we prove Propositions \ref{linearkeypropo1} and \ref{linearkeypropo2}.
 \begin{proof}[Proof of Proposition \ref{linearkeypropo1}]
Let us first consider the case when $0<\rho<\frac{r}{2}$.
 By H\"older's inequality and Lemma \ref{MiYapropo},
 \begin{align*}
 \bigg(\frac{1}{|Q|}\int_Q \big| T_{\sigma_k}f^{(0)}(y)\big|^r\;dy\bigg)^{\frac{1}{r}}&\le \frac{1}{|Q|^{\frac{\rho}{r}}}\big\Vert T_{\sigma_k}f^{(0)}\big\Vert_{L^{\frac{r}{\rho}}(\bbrn)}\\
 &\lesssim \frac{1}{\ell(Q)^{\frac{n\rho  }{r}}}\Vert f\Vert_{L^r(P)}\le \Big( \frac{\ell(P)}{\ell(Q)^{\rho}}\Big)^{\frac{n}{r}}\mathrm{M}_rf(x)
 \end{align*}
 as $x\in Q\subset P$. This proves the first assertion.
 
Now assume $\frac{r}{2}\le \rho<1$. In this case, we apply H\"older's inequality and Lemma \ref{linearrhoge12} to deduce
  \begin{align*}
\bigg( \frac{1}{|Q|}\int_Q \big| T_{\sigma_k}f^{(0)}(y)\big|^r\;dy \bigg)^{\frac{1}{r}}&\le \frac{1}{|Q|^{\frac{\rho-\lambda}{r(1-\lambda)}}}\big\Vert T_{\sigma_k}f^{(0)}\big\Vert_{L^{\frac{r(1-\lambda)}{\rho-\lambda}}(\bbrn)}\\
 &\lesssim \frac{1}{\ell(Q)^{n \frac{\rho-\lambda}{r(1-\lambda)}}}2^{\lambda n k \frac{1-\rho}{r(1-\lambda)} } \Vert f\Vert_{L^r(P)}\\
 &\lesssim 2^{\lambda n k \frac{1-\rho}{r(1-\lambda)} }\ell(Q)^{-n \frac{\rho-\lambda}{r(1-\lambda)} }\ell(P)^{\frac{n}{r}}\mathrm{M}_rf(x)\\
 &=\big( 2^k \ell(Q)\big)^{\lambda n \frac{1-\rho}{r(1-\lambda)} }\Big(\frac{\ell(P)}{\ell(Q)^{\rho}} \Big)^{\frac{n}{r}}\mathrm{M}_rf(x),
 \end{align*}
 which completes the proof.
 \end{proof}

  \begin{proof}[Proof of Proposition \ref{linearkeypropo2}]
  
We first consider \eqref{linearkeypropo2_1}. Let $N>\frac{n}{r}$ and $y\in Q$.
 Using H\"older's inequality,
 \begin{align*}
 \big| T_{\sigma_k}f^{(1)}(y) \big|&\le \int_{ P^c}\big|K_k(y,y-u) \big| \big| f(u)\big|\; du\\
 &\le \bigg\Vert \ell(P)^{\frac{n}{r}}\Big( \frac{|\cdot|}{\ell(P)}\Big)^N K_k(y,\cdot)\bigg\Vert_{L^{r'}(\bbrn  )} \bigg\Vert  \ell(P)^{-\frac{n}{r}}\bigg( \frac{|y-\cdot|}{\ell(P)}\bigg)^{-N} f \bigg\Vert_{L^r(P^c)}.
 \end{align*}
 The $L^{r'}$-norm can be estimated as
 \begin{align}\label{linearlpvnkkest}
\bigg\Vert \ell(P)^{\frac{n}{r}}\Big( \frac{|\cdot|}{\ell(P)}\Big)^N K_k(y,\cdot)\bigg\Vert_{L^{r'}(\bbrn  )}&=\ell(P)^{-(N-\frac{n}{r})}\Big\Vert |\cdot|^N K_k(y,\cdot) \Big\Vert_{L^{r'}(\bbrn)}\nonumber\\
&\le \ell(P)^{-(N-\frac{n}{r})}2^{-k\rho N} \Big\Vert \big( 1+2^{k\rho}|\cdot|\big)^N K_k(y,\cdot) \Big\Vert_{L^{r'}(\bbrn)}\nonumber\\
 &\lesssim \ell(P)^{-(N-\frac{n}{r})}2^{-k\rho N}2^{k(m+\frac{n}{r})}\nonumber\\
 &=\big(2^{k\rho}\ell(P)\big)^{-(N-\frac{n}{r})}
 \end{align}
 where the second inequality follows from Lemma \ref{linearkeyest}.
For the  $L^r$-norm, we note that for $x,y\in Q$, and $u\in P^c$,
\begin{equation*}
|y-u| \ge |x-u|-|x-y|\gtrsim_{n}\ell(P)+|x-u|
\end{equation*}
 and thus for $N>\frac{n}{r}$
 \begin{equation}\label{linearlpyvnest}
  \bigg\Vert  \ell(P)^{-\frac{n}{r}}\Big( \frac{|y-\cdot|}{\ell(P)}\Big)^{-N} f \bigg\Vert_{L^r(P^c)}\lesssim \bigg( \int_{\bbrn}\frac{1}{\ell(P)^{n}}\frac{1}{\big(1+\frac{|x-u|}{\ell(P)} \big)^{rN}} \big|f(u)\big|^r      \; du\bigg)^{1/r}\lesssim \mathrm{M}_rf(x).
 \end{equation}
Finally, we have
\begin{equation*}
 \big| T_{\sigma_k}f^{(1)}(y) \big| \lesssim \big(2^{k\rho}\ell(P)\big)^{-(N-\frac{n}{r})}\mathrm{M}_rf(x),
 \end{equation*}
as desired.

We next consider \eqref{linearkeypropo2_2}.
 The structure of the proof is very similar to that of \eqref{linearkeypropo2_1}, by replacing the kernel $K_k(y,y-u)$ by
 $$H_k(y,x,u):=K_k(y,y-u)-K_k(x,x-u).$$
 Assume $x,y\in Q$ and $N>\frac{n}{r}$.
 Using H\"older's inequality, we have
 \begin{align*}
 &\big| T_{\sigma_k}f^{(1)}(y)-T_{\sigma_k}f^{(1)}(x)\big|\\
 &\le \int_{ P^c}\big| H_k(y,x,u)\big|\big| f(u)\big|\; du\\
 &\le \bigg\Vert  \ell(P)^{\frac{n}{r}}\Big( \frac{|y-\cdot|}{\ell(P)}\Big)^N H_k(y,x,\cdot)\bigg\Vert_{L^{r'}( P^c)} \bigg\Vert \ell(P)^{-\frac{n}{r}}\Big( \frac{|y-\cdot|}{\ell(P)}\Big)^{-N}f\bigg\Vert_{L^r( P^c)}.
 \end{align*}
 
  Then the following estimate holds as an analogue of \eqref{linearlpvnkkest}: for $x,y\in Q$,
 \begin{align*}
 &\bigg\Vert  \ell(P)^{\frac{n}{r}}\Big( \frac{|y-\cdot|}{\ell(P)}\Big)^N H_k(y,x,\cdot)\bigg\Vert_{L^{r'}( P^c)} \\
 &\lesssim \bigg\Vert \ell(Q) \ell(P)^{\frac{n}{r}} \Big( \frac{|y-\cdot|}{\ell(P)}\Big)^N \int_0^1 \big| \nabla K_k\big(y(t),y(t)-\cdot \big)\big|\; dt      \bigg\Vert_{L^{r'}(P^c)}\nonumber\\
 &\lesssim \ell(Q)\ell(P)^{-(N-\frac{n}{r})}\int_0^1 \Big\Vert   |y(t)-\cdot|^N  \big|\nabla K_k\big(y(t),y(t)-\cdot\big) \big|      \Big\Vert_{L^{r'}(\bbrn)} \; dt\nonumber\\
 & \le \ell(Q)\ell(P)^{-(N-\frac{n}{r})}2^{-k\rho N}\int_0^1 \Big\Vert   \big(1+2^{k\rho}|\cdot|\big)^N \big|\nabla K_k\big(y(t),\cdot \big) \big|      \Big\Vert_{L^{r'}(\bbrn)} \; dt\\
 &\lesssim     \ell(Q)\ell(P)^{-(N-\frac{n}{r})}2^{-k\rho N}2^{k(1+m+\frac{n}{r})}      =2^k\ell(Q) \big( 2^{k\rho}\ell(P)\big)^{-(N-\frac{n}{r})}
 \end{align*} where $y(t):=ty+(1-t)x\in Q$ so that $|y-u|\lesssim |y(t)-u|$ for $u\in P^c$.
 Here, the last inequality holds due to Lemma \ref{linearkeyest}.
This, together with \eqref{linearlpyvnest}, yields that
\begin{equation*}
\big| T_{\sigma_k}f^{(1)}(y)-T_{\sigma_k}f^{(1)}(x)\big|\lesssim  2^k\ell(Q) \big( 2^{k\rho}\ell(P)\big)^{-(N-\frac{n}{r})}\mathrm{M}_rf(x).
\end{equation*}
This finishes the proof.
 \end{proof}

 \hfill
 
 \subsection{Proof of Theorem \ref{firstmainthm}}

 Let $1<r\le 2$. We fix a cube $Q$ and a point $x\in Q$. Then it suffices to show that
 \begin{equation}\label{linearmest0}
\bigg( \frac{1}{|Q|}\int_Q \big| T_{\sigma}f(y)\big|^r\; dy\bigg)^{\frac{1}{r}} \lesssim \mathrm{M}_rf(x) \q \text{if}~\ell(Q)\ge 1
 \end{equation}
 and 
 \begin{equation}\label{linearmest1}
 \inf_{c_Q\in\bbc}\bigg(\frac{1}{|Q|}\int_Q \big|T_{\sigma}f(y) -c_Q\big|^r\; dy \bigg)^{\frac{1}{r}}\lesssim \mathrm{M}_rf(x) \q \text{if}~ \ell(Q)<1
 \end{equation}
 uniformly in $Q$ and $x$.
 
 \subsubsection{Proof of \eqref{linearmest0}}
 Assume $\ell(Q)\ge 1$ and let $Q^*$ be the dilate of $Q$ by a factor of $10\sqrt{n}$.
 Then we write
 \begin{equation}\label{lineardildecfg}
 f=f\chi_{Q^*}+f\chi_{(Q^*)^c}=:\mathbf{f}^{(0)}+\mathbf{f}^{(1)}
 \end{equation}
 and thus 
$$\bigg( \frac{1}{|Q|}\int_Q \big|T_{\sigma}f(y)\big|^r \; dy \bigg)^{\frac{1}{r}}\le \bigg(\frac{1}{|Q|}\int_Q\big| T_{\sigma}\mathbf{f}^{(0)}(y)\big|^r\; dy\bigg)^{\frac{1}{r}}+\bigg(\frac{1}{|Q|}\int_Q \big| T_{\sigma}\mathbf{f}^{(1)}(y)\big|^r\; dy\bigg)^{\frac{1}{r}}.$$
 
 By the $L^r$ boundedness for $T_{\sigma}$ in Theorem \ref{thma} with
 $$m=-\frac{n}{r}(1-\rho)<-n(1-\rho)\Big(\frac{1}{r}-\frac{1}{2}\Big),$$
 we first obtain 
 \begin{equation*}
 \bigg( \frac{1}{|Q|}\int_Q\big| T_{\sigma}\mathbf{f}^{(0)}(y)\big|^r\; dy \bigg)^{\frac{1}{r}}\le \frac{1}{|Q|^{\frac{1}{r}}}\big\Vert    T_{\sigma}\mathbf{f}^{(0)}     \big\Vert_{L^r(\bbrn)}\lesssim \frac{1}{\ell(Q)^{\frac{n}{r}}}\Vert f\Vert_{L^r(Q^*)}\lesssim \mathrm{M}_rf(x).
 \end{equation*}
 
 To estimate the other term, using the decomposition \eqref{linsigmadecom}, we express
 $$T_{\sigma}\mathbf{f}^{(1)}=\sum_{k\in\bbn_0}T_{\sigma_k}\mathbf{f}^{(1)}.$$
 Then choosing $N>\frac{n}{r}$ and using \eqref{linearkeypropo2_1},
$$  \big|T_{\sigma_k}\mathbf{f}^{(1)}(y)\big|\lesssim \big( 2^{\rho k}\ell(Q)\big)^{-(N-\frac{n}{r})} \mathrm{M}_rf(x),$$
 which proves
 $$\big| T_{\sigma}\mathbf{f}^{(1)}(y)\big|\lesssim \mathrm{M}_rf(x) \sum_{k\in\bbn_0}  \big( 2^{\rho k}\ell(Q)\big)^{-(N-\frac{n}{r})} \lesssim \mathrm{M}_rf(x)  $$
 as $N>\frac{n}{r}$ and $\ell(Q)\ge 1$.
 This completes the proof of \eqref{linearmest0}.
 
 \hfill

 \subsubsection{Proof of \eqref{linearmest1}}
 Assume $\ell(Q)<1$. In this case we need to consider separately the cases $0<\rho< \frac{r}{2}$ and $\frac{r}{2}\le \rho<1$, in which different decompositions will be performed.\\
 
 {\bf The case when $0<\rho< \frac{r}{2}$}
 
 Let $Q_{\rho}$ be the concentric dilate of $Q$ with $\ell(Q_{\rho})=10\sqrt{n}  \ell(Q)^{\rho}$. We split $f$ as 
 \begin{equation}\label{lineardecomfqrho}
f=f\chi_{Q_{\rho}}+f\chi_{(Q_{\rho})^c}=:\mathtt{f}^{(0)}+\mathtt{f}^{(1)}.
\end{equation}
Then the left-hand side of \eqref{linearmest1} is less than the sum of
\begin{equation*}
 \mathcal{I}_1:=\bigg(\frac{1}{|Q|}\int_Q \big| T_{\sigma}\mathtt{f}^{(0)}(y)\big|^r\; dy\bigg)^{\frac{1}{r}}
\end{equation*}
and
\begin{equation*}
\mathcal{I}_2:=\inf_{c_Q\in\mathbb{C}}\bigg(\frac{1}{|Q|}\int_Q  \big|T_{\sigma}\mathtt{f}^{(1)}(y)-c_Q\big|^r \; dy\bigg)^{\frac{1}{r}}.
\end{equation*}
 
First of all, by using H\"older's inequality and Lemma \ref{MiYapropo}, it can be easily verified that
 \begin{align*}
 \mathcal{I}_1&\le \frac{1}{|Q|^{\frac{\rho}{r}}}\big\Vert    T_{\sigma}\mathtt{f}^{(0)}  \big\Vert_{L^{\frac{r}{\rho}}(\bbrn)}\lesssim \frac{1}{\ell(Q)^{\frac{n\rho }{r}}}\Vert f\Vert_{L^r(Q_{\rho})}\lesssim \mathrm{M}_rf(x).
 \end{align*}

 To estimate $\mathcal{I}_2$, using the decomposition \eqref{linsigmadecom} and taking
  \begin{equation*}
 c_Q=\sum_{k:2^k\ell(Q)<1}T_{\sigma_k}\mathtt{f}^{(1)}(x),
 \end{equation*}
 we write
  \begin{equation*}
 \big|T_{\sigma} \mathtt{f}^{(1)}(y)-c_Q\big|\le \sum_{k:2^k\ell(Q)\ge 1}\big| T_{\sigma_k}\mathtt{f}^{(1)}(y)   \big|+\sum_{k:2^k\ell(Q)<1}\big| T_{\sigma_k}\mathtt{f}^{(1)}(y) -T_{\sigma_k} \mathtt{f}^{(1)}(x)    \big|.
 \end{equation*}
 We first see that
 \begin{equation*}
 \big| T_{\sigma_k}\mathtt{f}^{(1)}(y)   \big|\lesssim \big( 2^k\ell(Q)\big)^{-\rho(N-\frac{n}{r})}\mathrm{M}_rf(x)
 \end{equation*}
 by applying \eqref{linearkeypropo2_1}, and this proves
 \begin{equation}\label{linearsk2klqge1}
 \sum_{k:2^k\ell(Q)\ge 1}\big| T_{\sigma_k}\mathtt{f}^{(1)}(y)   \big|\lesssim  \mathrm{M}_rf(x)\sum_{k:2^k\ell(Q)\ge 1}\big( 2^k\ell(Q)\big)^{-\rho(N-\frac{n}{r})}\lesssim \mathrm{M}_rf(x)
 \end{equation} as $\rho>0$.
 For the remaining part, we pick a number $N$ such that 
 $$\frac{n}{r}<N<\frac{n}{r}+\frac{1}{\rho}$$
 and then apply \eqref{linearkeypropo2_2} to obtain
 \begin{equation}\label{linearsk2klqge2}
\sum_{k:2^k\ell(Q)<1}\big| T_{\sigma_k}\mathtt{f}^{(1)}(y) -T_{\sigma_k} \mathtt{f}^{(1)}(x)    \big|\lesssim \sum_{k:2^k\ell(Q)<1}  \big( 2^k\ell(Q)\big)^{1-\rho(N-\frac{n}{r})} \mathrm{M}_rf(x) \sim \mathrm{M}_rf(x)  
 \end{equation}
 as $1-\rho(N-\frac{n}{r})>0$.
 The estimates  \eqref{linearsk2klqge1} and \eqref{linearsk2klqge2}  provide the required conclusion for $\mathcal{I}_2$:
 $$\mathcal{I}_2\lesssim \mathrm{M}_rf(x).$$
 Hence \eqref{linearmest1} holds for $0< \rho<\frac{r}{2}$.

 \hfill
 
 {\bf The case when $\frac{r}{2}\le \rho<1$}

 By using the decomposition \eqref{linsigmadecom},
 the left-hand side of \eqref{linearmest1} is bounded by the sum of
 \begin{align*}
 \mathcal{J}_1&:=\inf_{c_Q\in \bbc}\bigg(\frac{1}{|Q|}\int_Q \Big| \sum_{k:2^k\ell(Q)< 1}T_{\sigma_k}f(y)-c_{Q}\Big|^r \; dy\bigg)^{\frac{1}{r}},\\
 \mathcal{J}_2&:=\sum_{\substack{k:2^k\ell(Q)\ge 1,\\~ 2^{\rho k       }\ell(Q)< 1}} \bigg( \frac{1}{|Q|}\int_Q \big| T_{\sigma_k}f(y)\big|^r\;dy\bigg)^{\frac{1}{r}},\\
 \mathcal{J}_3&:=\sum_{k:2^{\rho k}\ell(Q)\ge 1}\bigg(\frac{1}{|Q|}\int_Q \big| T_{\sigma_k}f(y)\big|^r\;dy\bigg)^{\frac{1}{r}}.
 \end{align*}
Now the proof of \eqref{linearmest1} is reduced to the estimates
\begin{equation}\label{lineariiestm2}
\mathcal{J}_{\mathrm{i}}\lesssim \mathrm{M}_rf(x),\q \mathrm{i}=1,2,3,
\end{equation}
which will be proved below.
For the proof of \eqref{lineariiestm2}, we set $\frac{2\rho -r}{2-r}<\lambda<\rho$  so that the inequality \eqref{linearrhoge12est} holds for $\frac{r}{2}\le \rho<1$.\\
 
To estimate $\mathcal{J}_1$, 
recalling $Q_{\rho}$ is the concentric dilate of $Q$ whose side-length is $10\sqrt{n} \ell(Q)^{\rho}$ and using \eqref{lineardecomfqrho},
we write 
 \begin{align*}
T_{\sigma_k}f&=T_{\sigma_k}\mathtt{f}^{(0)}+T_{\sigma_k}\mathtt{f}^{(1)}.
 \end{align*}
 Then by taking
 $$c_Q=\sum_{k:2^k\ell(Q)<1}T_{\sigma_k}\mathtt{f}^{(1)}(x),$$
 we bound $ \mathcal{J}_1$ by
 \begin{align*}
\sum_{k:2^k\ell(Q)<1}\left[ \bigg( \frac{1}{|Q|}\int_Q \big|T_{\sigma_k}\mathtt{f}^{(0)}(y)\big|^r \; dy\bigg)^{\frac{1}{r}}+\bigg(\frac{1}{|Q|}\int_Q \big|T_{\sigma_k}\mathtt{f}^{(1)}(y)-T_{\sigma_k}\mathtt{f}^{(1)}(x) \big|^r\;dy\bigg)^{\frac{1}{r}}\right].
 \end{align*}
It follows from Proposition \ref{linearkeypropo1} (2) that
\begin{equation*}
 \bigg(\frac{1}{|Q|}\int_Q \big|T_{\sigma_k}\mathtt{f}^{(0)}(y)\big|^r\; dy\bigg)^{\frac{1}{r}}  \lesssim \big(2^k\ell(Q)\big)^{\lambda n \frac{1-\rho}{r(1-\lambda)} }\mathrm{M}_rf(x).
\end{equation*}
In addition, using \eqref{linearkeypropo2_2},  for $y\in Q$
 \begin{equation*}
 \big|T_{\sigma_k}\mathtt{f}^{(1)}(y)-T_{\sigma_k}\mathtt{f}^{(1)}(x) \big|\lesssim \big(2^k\ell(Q) \big)^{1-\rho(N-\frac{n}{r})}\mathrm{M}_rf(x)
 \end{equation*}
 where we choose $\frac{n}{r}<N<\frac{n}{r}+\frac{1}{\rho}$. This proves \eqref{lineariiestm2} for $\mathrm{i}=1$.\\

 For the term $\mathcal{J}_2$, assume that
 \begin{equation*}
 1\le 2^k\ell(Q) \q \text{ and }\q 2^{\rho k}\ell(Q)<1.
 \end{equation*}
  We choose a positive number $\epsilon$ such that
 \begin{equation}\label{linrangeep}
 \lambda \Big(\frac{1-\rho}{1-\lambda}\Big)<\epsilon<\rho,
 \end{equation}
 which is possible because $\lambda<\rho$.
 Now let $Q_{\epsilon,k}^{\rho}$ be the concentric dilate of $Q$ with $\ell(Q_{\epsilon,k}^{\rho})=10\sqrt{n} \ell(Q)^{\rho} \big(2^{k}\ell(Q) \big)^{-\epsilon}$. Here, we point out that
 \begin{equation}\label{10sqnlqlelqrho}
 10 \sqrt{n}  \ell(Q)\le \ell(Q_{\epsilon,k}^{\rho}).
 \end{equation}
 Indeed, it is equivalent to
 \begin{align*}
 2^{\rho k}\ell(Q)^{\rho(\frac{1-\rho+\epsilon}{\epsilon})}\le 1
 \end{align*}
 and this is true as 
 $$\rho\Big(\frac{1-\rho+\epsilon}{\epsilon}\Big)\ge 1,$$
 equivalently,
 $$\epsilon\le \rho.$$
Then we divide
 $$f=f\chi_{Q_{\epsilon,k}^{\rho}}+f\chi_{(Q_{\epsilon,k}^{\rho})^c}=:f_{\rho,k}^{(0)}+f_{\rho,k}^{(1)}$$
  so that $T_{\sigma_k}f$ can be written as
  \begin{equation*}
  T_{\sigma_k}f=T_{\sigma_k}f_{\rho,k}^{(0)}+ T_{\sigma_k} f_{\rho,k}^{(1)}.
  \end{equation*}
Now by using Proposition \ref{linearkeypropo1} (2), 
\begin{align*}
\bigg( \frac{1}{|Q|}\int_Q \big| T_{\sigma_k}f_{\rho,k}^{(0)}(y)\big|^r\; dy \bigg)^{\frac{1}{r}}&\lesssim \big( 2^{ k}\ell(Q)\big)^{ \lambda n \frac{1-\rho}{r(1-\lambda)}    }\big(2^k\ell(Q) \big)^{-\frac{\epsilon n }{r}}\mathrm{M}_rf(x)\\
&=\big(2^k\ell(Q) \big)^{-\frac{n}{r}(\epsilon-\lambda(\frac{1-\rho}{1-\lambda}))}\mathrm{M}_rf(x).
\end{align*}
Furthermore, using \eqref{linearkeypropo2_1},
\begin{align*}
\big| T_{\sigma_k}f_{\rho,k}^{(1)}(y)\big|& \lesssim \Big( 2^{k\rho}\ell(Q)^{\rho}\big( 2^k\ell(Q)\big)^{-\epsilon}    \Big)^{-(N-\frac{n}{r})}\mathrm{M}_rf(x)\\
&= \big(2^k\ell(Q) \big)^{-(\rho-\epsilon)(N-\frac{n}{r})} \mathrm{M}_rf(x)
\end{align*} where $N>\frac{n}{r}$.
By \eqref{linrangeep}, these clearly imply \eqref{lineariiestm2} for $\ii=2$.\\

Now let us consider the last one $\mathcal{J}_3$. In this case, we will use the decomposition \eqref{lineardildecfg} to split $T_{\sigma_k}f$ as
 \begin{equation*}
 T_{\sigma_k}f=T_{\sigma_k}\mathbf{f}^{(0)}+T_{\sigma_k}\mathbf{f}^{(1)}.
  \end{equation*}
 Since
 $$2^{\frac{kn}{2}(1-\rho)}\sigma_k\in S_{\rho,\rho}^{-n(1-\rho)(\frac{1}{r}-\frac{1}{2})}(\bbrn) ~ \text{uniformly in } k,$$
 the $L^r$ boundedness for $T_{\sigma_k}$ in Theorem \ref{thma} implies
\begin{align*}
  \bigg(\frac{1}{|Q|}\int_Q \big| T_{\sigma_k}\mathbf{f}^{(0)}(y)\big|^r \; dy\bigg)^{\frac{1}{r}}&\le \frac{1}{|Q|^{\frac{1}{r}}} \big\Vert T_{\sigma_k}\mathbf{f}^{(0)}\big\Vert_{L^{r}(\bbrn)}\\
 &\lesssim 2^{-\frac{kn}{2}(1-\rho)} \frac{1}{\ell(Q)^{\frac{n}{r}}}\Vert f\Vert_{L^r(Q^*)}\lesssim 2^{-\frac{kn}{2}(1-\rho)}\mathrm{M}_rf(x).
 \end{align*}
 This proves
 \begin{equation*}
 \sum_{k:2^{\rho k}\ell(Q)\ge 1}  \bigg( \frac{1}{|Q|}\int_Q \big|   T_{\sigma_k}\mathbf{f}^{(0)}(y)\big|^r\; dy\bigg)^{\frac{1}{r}} \le \sum_{k=0}^{\infty} \bigg( \frac{1}{|Q|}\int_Q \big|   T_{\sigma_k}\mathbf{f}^{(0)}(y)\big|^r\; dy\bigg)^{\frac{1}{r}} \lesssim \mathrm{M}_rf(x).
 \end{equation*}
Furthermore, we utilize \eqref{linearkeypropo2_1} to obtain the estimates that 
\begin{equation*}
\big| T_{\sigma_k}\mathbf{f}^{(1)}(y)\big| \lesssim \big( 2^{k\rho} \ell(Q)        \big)^{-(N-\frac{n}{r})}\mathrm{M}_rf(x)
\end{equation*}
where $N>\frac{n}{r}$. 
Hence,
\begin{equation*}
\sum_{k:2^{\rho k}\ell(Q)\ge 1}  \bigg( \frac{1}{|Q|}\int_Q \big| T_{\sigma_k}\mathbf{f}^{(1)}(y)\big|^r \; dy\bigg)^{\frac{1}{r}} \lesssim  \mathrm{M}_rf(x) \sum_{k:2^{\rho k}\ell(Q)\ge 1}\big( 2^{k\rho} \ell(Q)        \big)^{-(N-\frac{n}{r})}\lesssim \mathrm{M}_rf(x).
\end{equation*}
We finally arrive at the conclusion
 $$\mathcal{J}_3\lesssim   \mathrm{M}_rf(x)$$
and this completes the proof of \eqref{linearmest1}.

 \hfill
 
 \section{Decomposition of multilinear pseudo-differential operators}\label{decompsection}

We will basically treat the multilinear operators in a structure similar to the linear case in Section \ref{linearproof}, using Littlewood-Paley decomposition in frequency spaces.
 As a multilinear counterpart of the partition of unity, we consider
  Schwartz functions $\Phi$ and $\Psi$ on $(\bbrn)^l$ such that
  $$\supp(\wh{\Phi})\subset \big\{\xxxi\in (\bbrn)^l: |\xxxi|\le 2 \big\}, \quad \supp(\wh{\Psi})\subset \big\{\xxxi\in (\bbrn)^l: 1/2\le |\xxxi|\le 2 \big\},$$
 and
  \begin{equation*}
 \wh{\Phi}(\xxxi)+\sum_{k\in\bbn}\wh{\Psi_k}(\xxxi)=1
\end{equation*}
where $\Psi_k(\xxx):=2^{knl}\Psi(2^k\xxx)$ for each $k\in\bbn$.
 Then the symbol $\sigma \in \mathbb{M}_l S^m_{\rho,\rho}(\bbrn)$ can be decomposed as
$$\sigma(x,\xxxi)=\sigma(x,\xxxi)\wh{\Phi}(\xxxi)+\sum_{k\in\bbn}\sigma(x,\xxxi)\wh{\Psi_k}(\xxxi)=:\sigma_0(x,\xxxi)+\sum_{k\in\bbn}\sigma_k(x,\xxxi)$$ for each $x\in\bbrn$ and $\xxxi\in (\bbrn)^l$
and accordingly we have
\begin{equation}\label{sigmadecomp}
T_{\sigma}(f_1,\dots,f_l)=\sum_{k\in\bbn_0}T_{\sigma_k}(f_1,\dots,f_l)
\end{equation}
 where $T_{\sigma_k}$ are the multilinear pseudo-differential operators associated with $\sigma_k$. We need to notice that for each multi-indices $\alpha,\beta_1,\dots,\beta_l\in (\bbn_0)^n$
  there exists a constant $C_{\alpha,\beta_1,\dots,\beta_l}>0$, independent of $k$, such that
\begin{equation}\label{sigmakptest}
\big|\partial_x^{\alpha}\partial_{\xi_1}^{\beta_1}\cdots\partial_{\xi_l}^{\beta_l}\sigma_k(x,\xi_1,\dots,\xi_l)\big|\le C_{\alpha,\beta_1,\dots,\beta_l}\big( 1+|\xi_1|+\cdots+|\xi_l|\big)^{m-\rho(|\beta_1|+\dots+|\beta_l|-|\alpha|)}.
\end{equation}
 In other words, 
  $\sigma_k\in \mathbb{M}_lS_{\rho,\rho}^{m}(\bbrn)$ uniformly in $k$.
  Now we write
$$T_{\sigma_k}\big(f_1,\dots,f_l \big)(y)=\int_{(\bbrn)^l}K_k(y,y-u_1,\dots,y-u_l) \prod_{j=1}^{l}f_j(u_j) \; du_1\cdots du_l$$
 where
 \begin{equation}\label{kkyuvdef}
 K_k(y,u_1,\dots,u_l):=\int_{(\bbrn)^l}\sigma_k(y,\xi_1,\dots,\xi_l)e^{2\pi i\sum_{j=1}^{l}\langle u_j,\xi_j\rangle}\; d\xi_1\cdots d\xi_l.
 \end{equation}

For any $0\le \lambda\le \rho$, we define
\begin{equation}\label{taudef}
\tau_k(x,\xi_1,\dots,\xi_l):=\sigma_k(2^{-\lambda k}x, 2^{\lambda k}\xi_1,\dots, 2^{\lambda k}\xi_l).
\end{equation}
 Then in view of \eqref{sigmakptest}, for any multi-indices $\alpha,\beta_1,\dots,\beta_l\in (\bbn_0)^n$ there is a constant $C_{\alpha,\beta_1,\dots,\beta_l}>0$, independent of $k$, such that
  \begin{equation}\label{ckins}
 \big| \partial_x^{\alpha}\partial_{\xi_1}^{\beta_1}\cdots\partial_{\xi_l}^{\beta_l}\tau_k(x,\xi_1,\dots,\xi_l)     \big|\le C_{\alpha,\beta_1,\dots,\beta_l} \big( 1+|\xi_1|+\dots+|\xi_l|\big)^{\frac{m}{1-\lambda}-\frac{\rho-\lambda}{1-\lambda}(|\beta_1|+\dots+|\beta_l|-|\alpha|)}.
 \end{equation} 
 That is,
 \begin{equation}\label{taukinmlsrn}
 \tau_k\in \mathbb{M}_lS_{\frac{\rho-\lambda}{1-\lambda},\frac{\rho-\lambda}{1-\lambda}}^{\frac{m}{1-\lambda}}(\bbrn)\q \text{ uniformly in }~k.
 \end{equation} 
 The proof of \eqref{ckins} is straightforward and almost same as that of \eqref{linearckins}, which is provided in the appendix, so it is omitted here. 
 Moreover, it is easy to verify that
 \begin{equation}\label{tsigkf1lest}
 T_{\sigma_k}\big(f_1,\dots,f_l \big)(x)=T_{\tau_k}\big( f_1(2^{-\lambda k}\cdot),\dots,f_l(2^{-\lambda k}\cdot)\big)(2^{\lambda k}x).
 \end{equation}

 As a multilinear counterpart of Lemma \ref{linearkeyest}, we have the following kernel estimates, which will play an essential role in establishing pointwise estimates in Proposition \ref{keypropo2} in Section \ref{mefmosec}.
\begin{lemma}\label{kernelest}
Let $0\le \rho<1$ and $m\in\bbr$. 
Suppose that $\sigma\in \mathbb{M}_lS_{\rho,\rho}^{m}(\bbrn)$ and let $K_k$ be defined as in \eqref{kkyuvdef}.
For arbitrary nonnegative $N_1,\dots,N_l\ge 0$ and $1\le r\le 2$, we have
 \begin{align*}
 \bigg\Vert \Big(\prod_{j=1}^{l}(1+2^{k\rho} |u_j|)^{N_j} \Big) K_k(y,u_1,\dots,u_l)\bigg\Vert_{L^{r'}({u_1,\dots,u_l \in \bbrn})}&\lesssim_{N_1,\dots,N_l}2^{k(m+\frac{nl}{r})}\\
  \bigg\Vert \Big( \prod_{j=1}^{l}(1+2^{k\rho} |u_j|)^{N_j}\Big)\nabla_y K_k(y,u_1,\dots,u_l) \bigg\Vert_{L^{r'}(u_1,\dots,u_l \in \bbrn)}&\lesssim_{N_1,\dots,N_l}2^{k(\rho+m+\frac{nl}{r})}\\
    \bigg\Vert \Big( \prod_{j=1}^{l}(1+2^{k\rho} |u_j|)^{N_j}\Big)\nabla_{u_{\mathrm{i}}} K_k(y,u_1,\dots,u_l) \bigg\Vert_{ L^{r'}(u_1,\dots,u_l \in \bbrn)}&\lesssim_{N_1,\dots,N_l}2^{k(1+m+\frac{nl}{r})},
\end{align*}
 uniformly in $y\in\bbrn$,
  where $\nabla_y$ and $\nabla_{u_{\mathrm{i}}}$,  $\mathrm{i}=1,\dots,l$, denote the gradient operators with respect to $y$ and $u_{\mathrm{i}}$, respectively.
\end{lemma}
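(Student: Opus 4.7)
The plan is to carry out the direct multilinear analogue of the argument used for Lemma \ref{linearkeyest}, where the Fourier transform is taken jointly in all $l$ frequency variables $(\xi_1,\dots,\xi_l)\in(\bbrn)^l$ so that one applies the Hausdorff--Young inequality on the product space $(\bbrn)^l$ of total dimension $nl$. First I would integrate by parts in \eqref{kkyuvdef} to obtain, for any multi-indices $\beta_1,\dots,\beta_l\in(\bbn_0)^n$,
$$\prod_{j=1}^{l}(2\pi i u_j)^{\beta_j}K_k(y,u_1,\dots,u_l)=\int_{(\bbrn)^l}\partial_{\xi_1}^{\beta_1}\cdots\partial_{\xi_l}^{\beta_l}\sigma_k(y,\xi_1,\dots,\xi_l)\,e^{2\pi i\sum_{j=1}^l\langle u_j,\xi_j\rangle}\,d\xi_1\cdots d\xi_l,$$
and then apply Hausdorff--Young for $1\le r\le 2$ in the $nl$ joint variables to deduce
$$\Big\Vert \prod_{j=1}^l u_j^{\beta_j}K_k(y,\cdot)\Big\Vert_{L^{r'}((\bbrn)^l)}\lesssim \Big\Vert \partial_{\xi_1}^{\beta_1}\cdots\partial_{\xi_l}^{\beta_l}\sigma_k(y,\cdot)\Big\Vert_{L^r((\bbrn)^l)}.$$

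Next I would use the derivative bound \eqref{sigmakptest}, namely $|\partial_{\xi_1}^{\beta_1}\cdots\partial_{\xi_l}^{\beta_l}\sigma_k|\lesssim 2^{k(m-\rho(|\beta_1|+\cdots+|\beta_l|))}$ on the support $\{1+|\xi_1|+\cdots+|\xi_l|\sim 2^k\}$, whose volume is of order $2^{knl}$; this gives an $L^r$-norm estimate of $2^{k(m-\rho(|\beta_1|+\cdots+|\beta_l|))}2^{knl/r}$. Rearranging the $2^{k\rho}$ weights produces
$$\Big\Vert \prod_{j=1}^l(2^{k\rho}u_j)^{\beta_j}K_k(y,\cdot)\Big\Vert_{L^{r'}}\lesssim 2^{k(m+\frac{nl}{r})}$$
uniformly in $y\in\bbrn$. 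To pass from pure monomials to the product weight $\prod_{j=1}^l(1+2^{k\rho}|u_j|)^{N_j}$, I would first replace $N_j$ by $\lceil N_j\rceil$ (since the weights are $\ge 1$), then expand each factor binomially and dominate $|u_j|^{|\beta_j|}$ by $\sum_{|\alpha|=|\beta_j|}|u_j^\alpha|$; the product structure of the weight matches the tensor-product structure of the Fourier transform perfectly, so the whole weight is a finite sum of pure monomials $\prod_j u_j^{\beta_j}$ each of which is handled by the previous display. This proves the first inequality.

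The gradient estimates follow by essentially the same scheme after replacing $\sigma_k$ by the appropriate variant. For $\nabla_y K_k$, differentiation under the integral gives a kernel with $\nabla_y\sigma_k$ in place of $\sigma_k$, and \eqref{sigmakptest} yields the symbol estimate $|\partial_{\xi_1}^{\beta_1}\cdots\partial_{\xi_l}^{\beta_l}\nabla_y\sigma_k|\lesssim 2^{k(m+\rho-\rho(|\beta_1|+\cdots+|\beta_l|))}$ on the same support, adding a single factor of $2^{k\rho}$ to the final bound. For $\nabla_{u_{\mathrm{i}}}K_k$, differentiating in $u_{\mathrm{i}}$ under the integral pulls out a factor $2\pi i\xi_{\mathrm{i}}$, and the symbol $\xi_{\mathrm{i}}\sigma_k$ satisfies $|\partial_{\xi_1}^{\beta_1}\cdots\partial_{\xi_l}^{\beta_l}(\xi_{\mathrm{i}}\sigma_k)|\lesssim 2^{k(m+1-\rho(|\beta_1|+\cdots+|\beta_l|))}$ by Leibniz (using $|\xi_{\mathrm{i}}|\lesssim 2^k$ on the support), giving an extra factor $2^k$.

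There is no real obstacle here: the argument is purely a bookkeeping exercise that transplants the linear proof to the product setting. The only point that deserves care is the bookkeeping around the product weight $\prod_j(1+2^{k\rho}|u_j|)^{N_j}$, which must be expanded into a finite sum of tensor monomials $\prod_j u_j^{\beta_j}$ rather than handled as a single radial factor in the full $nl$-dimensional variable, since the Hausdorff--Young step is only effective for such tensor-product expressions on the frequency side.
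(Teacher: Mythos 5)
Your proposal is correct and follows essentially the same route as the paper: the paper proves the lemma by repeating the Hausdorff--Young argument of Lemma \ref{linearkeyest} in the joint $nl$-dimensional frequency variable, using exactly the multilinear symbol estimates for $\partial_{\xi_1}^{\beta_1}\cdots\partial_{\xi_l}^{\beta_l}\sigma_k$, $\nabla_y\sigma_k$, and $\xi_{\mathrm{i}}\sigma_k$ on the annulus $\{1+|\xi_1|+\cdots+|\xi_l|\sim 2^k\}$ that you invoke. Your additional bookkeeping (rounding $N_j$ up to integers and expanding the product weight into tensor monomials) is the standard reduction the paper leaves implicit, so there is no gap.
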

The proof of the lemma is exactly same as that of Lemma \ref{linearkeyest}, simply using the multilinear estimates
   \begin{align*}
 \big|\partial_{\xi_1}^{\beta_1}\cdots \partial_{\xi_l}^{\beta_l}\sigma_k(y,\xi_1,\dots,\xi_l) \big|&\lesssim 2^{k(m-\rho(|\beta_1|+\dots+|\beta_l|))}\chi_{\{ 1+|\xi_1|+\cdots+|\xi_l|\sim 2^k\}}\\
  \big|\partial_{\xi_1}^{\beta_1}\cdots \partial_{\xi_l}^{\beta_l}\nabla_y\sigma_k(y,\xi_1,\dots,\xi_l) \big|&\lesssim 2^{k(m+\rho-\rho(|\beta_1|+\dots+|\beta_l|))}\chi_{\{1+|\xi_1|+\dots+|\beta_l|\sim 2^k\}}\\
    \big|\partial_{\xi_1}^{\beta_1}\cdots\partial_{\xi_l}^{\beta_l} \big( \xi_{\mathrm{i}} \,\sigma_k(y,\xi_1,\dots,\xi_l)\big) \big|&\lesssim 2^{k(m+1-\rho(|\beta_1|+\dots+|\beta_l|))}\chi_{\{1+|\xi_1|+\dots+|\xi_l|\sim 2^k\}}
 \end{align*}  for $\mathrm{i}=1,\dots,l$, in place of \eqref{paxibesikest1} and \eqref{paxibesikest2}.

\hfill

 \section{Proof of Proposition \ref{multilineargerho}}\label{thm12pf}
 
Let $0<\rho<1$ and $0<p_1,\dots,p_l\le \infty$ with $1/p_1+\dots+1/p_l=1/p$. Suppose that
 $m<m_{\rho}(\ppp)$ and $\sigma\in \mathbb{M}_lS_{\rho,\rho}^{m}(\bbrn)$
 where $\ppp:=(p_1,\dots,p_l)$.
  Let $\epsilon_0:=m_{\rho}(\ppp)-m>0$.
 Setting $\lambda=\rho$ and using \eqref{ckins}, we first see
  $$2^{k\epsilon_0}\tau_k\in \mathbb{M}_lS_{0,0}^{m_0(\ppp)}(\bbrn) \text{ uniformly in }~k.$$
 Then it follows from Theorem \ref{thme} that
 $$ 2^{k\epsilon_0}\big\Vert T_{\tau_k}(f_1,\dots,f_l)\big\Vert_{Y^p(\bbrn)}\lesssim \prod_{j=1}^{l}\Vert f_j\Vert_{Y^{p_j}(\bbrn)} \q \text{uniformly in }~k$$
 for all Schwartz functions $f_1,\dots,f_l$ on $\bbrn$, and this further implies that
 $$\big\Vert T_{\tau_k}(f_1,\dots,f_l)\big\Vert_{X^p(\bbrn)}\lesssim 2^{-k\epsilon_0} \prod_{j=1}^{l}\Vert f_j\Vert_{H^{p_j}(\bbrn)},$$
 due to the estimate
 $$\big\Vert T_{\tau_k}(f_1,\dots,f_l)\big\Vert_{X^p(\bbrn)}\lesssim \big\Vert T_{\tau_k}(f_1,\dots,f_l)\big\Vert_{Y^p(\bbrn)}$$
 and the embedding $H^{q}\hookrightarrow Y^q$ for $0<q\le \infty$,
 where $X^p$ is defined as in \eqref{xpspacedef}.
 Then by using \eqref{tsigkf1lest} and dilation properties of norms in $X^q$ and $H^q$, namely
 $$\big\Vert f(A\cdot)\big\Vert_{X^q(\bbrn)}=A^{-\frac{n}{q}}\Vert f\Vert_{X^q(\bbrn)}\q \text{and}\q \big\Vert f(A\cdot)\big\Vert_{H^q(\bbrn)}=A^{-\frac{n}{q}}\Vert f\Vert_{H^q(\bbrn)} \q \text{for}~A>0,$$
 we obtain
 \begin{align*}
 \big\Vert T_{\sigma_k}(f_1,\dots,f_l)\big\Vert_{X^p(\bbrn)}&=\big\Vert T_{\tau_k}\big( f_1(2^{-\rho k}\cdot),\dots,f_l(2^{-\rho k}\cdot)\big)(2^{\rho k}\cdot)\big\Vert_{X^p(\bbrn)}\\
 &=2^{-k\frac{\rho n}{p}}\big\Vert T_{\tau_k}\big( f_1(2^{-\rho k}\cdot),\dots,f_l(2^{-\rho k}\cdot)\big)\big\Vert_{X^p(\bbrn)}\\
 &\lesssim 2^{-k\epsilon_0}2^{-k\frac{\rho n}{p}}\prod_{j=1}^{l}\big\Vert f_j(2^{-\rho k}\cdot)\big\Vert_{H^{p_j}(\bbrn)}\\
 &=2^{-k\epsilon_0} \prod_{j=1}^{l}\Vert f_j\Vert_{H^{p_j}(\bbrn)}.
 \end{align*}
 Finally, we have
 \begin{align*}
 \big\Vert T_{\sigma}(f_1,\dots,f_l)\big\Vert_{X^p(\bbrn)}&\le \Big( \sum_{k\in\bbn_0} \big\Vert T_{\sigma_k}(f_1,\dots,f_l)\big\Vert_{X^p(\bbrn)}^{\min(1,p)}   \Big)^{1/\min(1,p)}\\
 &\lesssim \Big( \sum_{k\in\bbn_0}{ 2^{-k\epsilon_0 \min(1,p)}} \Big)^{1/\min(1,p)}\prod_{j=1}^{l}\Vert f_j\Vert_{H^{p_j}(\bbrn)}\\
 &\lesssim \prod_{j=1}^{l}\Vert f_j\Vert_{H^{p_j}(\bbrn)}
 \end{align*}
 where we used the convexity of $\Vert \cdot\Vert_{X^p}$ when $p\ge 1$ and of $\Vert \cdot \Vert_{X^p}^p$ when $p<1$.
 This completes the proof.

 \section{Main estimates for multilinear operators }\label{mefmosec}
 
 In this section, we introduce a technically convenient tool which will structurally convert  multilinear problems into linear forms, and study several estimates
 that will be central to the proof of Theorem \ref{mainpointest}.

\subsection{Generalized trace theorem for Sobolev spaces}

For $s>0$ and $N\in\bbn$, let $(I_N-\Delta_N)^{s/2}$ be the Bessel potential acting on  tempered distributions on $\bbr^N$, defined by
$$(I_N-\Delta_N)^{s/2}f:= \big( (1+4\pi^2  |\cdot|^2)^{s/2}\wh{f}\big)^{\vee}.$$
The Sobolev space $L^2_s(\mathbb{R}^N)$ consists of all $f \in L^2(\mathbb{R}^N)$ satisfying
$$\Vert f\Vert_{L^2_s(\bbr^N)}:=\big\Vert (I_N-\Delta_N)^{s/2}f\big\Vert_{L^2(\bbr^N)}=\Big(\int_{\bbr^N}\big(1+ 4\pi^2|\xi|^2 \big)^s\big|\wh{f}(\xi) \big|^2\; d\xi \Big)^{1/2}<\infty.$$

 \begin{lemma}\label{tracethm}
Let $l\ge 2$ and $s>0$.
 Then
 \begin{equation}\label{traceest}
 \Vert \widetilde{G} \Vert_{L^2_{s}(\bbrn)}
\lesssim_{s,r,n} \Vert G\Vert_{L^2_{s+{(l-1)n}/{2}}((\bbrn)^l)}
 \end{equation}
 for Schwartz functions $G$ on $(\bbrn)^l$,
 where $\widetilde{G}(x)=G(x_1,\dots,x_{l}) |_{x_1=\dots=x_l=x}$ and $x,x_1,\dots,x_l \in \bbrn$.
 \end{lemma}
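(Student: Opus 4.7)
The plan is to prove this generalized trace estimate via direct Fourier analysis, which reduces the inequality to estimating a weight integral. First I write $\widetilde G$ in terms of $\widehat G$: since $G(x,\dots,x)=\int_{(\bbrn)^l}\widehat G(\eta_1,\dots,\eta_l)e^{2\pi i\langle x,\eta_1+\cdots+\eta_l\rangle}\,d\eta_1\cdots d\eta_l$, a change of variables $(\eta_1,\dots,\eta_l)\mapsto(\eta_1,\dots,\eta_{l-1},\xi)$ with $\xi=\eta_1+\cdots+\eta_l$ yields
\[
\widehat{\widetilde G}(\xi)=\int_{(\bbrn)^{l-1}}\widehat G\big(\eta_1,\dots,\eta_{l-1},\xi-\eta_1-\cdots-\eta_{l-1}\big)\,d\eta_1\cdots d\eta_{l-1}.
\]

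Next I apply Cauchy--Schwarz, splitting the integrand using the weight $(1+|\vec\eta|^2)^{s+(l-1)n/2}$, where $\vec\eta:=(\eta_1,\dots,\eta_{l-1},\xi-\eta_1-\cdots-\eta_{l-1})$. This gives $|\widehat{\widetilde G}(\xi)|^2\le I(\xi)\cdot J(\xi)$ with
\[
I(\xi)=\int_{(\bbrn)^{l-1}}\frac{d\eta_1\cdots d\eta_{l-1}}{(1+|\vec\eta|^2)^{s+(l-1)n/2}},\qquad J(\xi)=\int_{(\bbrn)^{l-1}}(1+|\vec\eta|^2)^{s+(l-1)n/2}\,|\widehat G(\vec\eta)|^2\,d\eta_1\cdots d\eta_{l-1}.
\]
The central computation is the bound $I(\xi)\lesssim (1+|\xi|^2)^{-s}$. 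For this I substitute $\eta_j=\xi/l+\zeta_j$ for $j=1,\dots,l-1$, so that $|\vec\eta|^2=|\xi|^2/l+Q(\vec\zeta)$ for a positive definite quadratic form $Q$ on $(\bbrn)^{l-1}$. Rescaling $\vec\zeta=(1+|\xi|^2/l)^{1/2}\vec\omega$ converts $I(\xi)$ into $(1+|\xi|^2/l)^{-s}$ times a $\vec\omega$-integral that converges precisely when $2\bigl(s+(l-1)n/2\bigr)>(l-1)n$, i.e.\ when $s>0$, matching our hypothesis.

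Plugging in and integrating $(1+4\pi^2|\xi|^2)^s|\widehat{\widetilde G}(\xi)|^2$ over $\xi\in\bbrn$, then performing the measure-preserving change of variables $(\eta_1,\dots,\eta_{l-1},\xi)\mapsto(\eta_1,\dots,\eta_l)$ with $\eta_l=\xi-\eta_1-\cdots-\eta_{l-1}$, yields the right-hand side $\|G\|_{L^2_{s+(l-1)n/2}((\bbrn)^l)}^2$, completing the proof. The only non-routine point is the weight calculation for $I(\xi)$; the key geometric input is that $|\vec\eta|^2$ attains its minimum $|\xi|^2/l$ on the affine subspace $\eta_1+\cdots+\eta_l=\xi$ at the balanced point $\eta_j=\xi/l$, which is precisely what produces the trace loss of $(l-1)n/2$ derivatives.
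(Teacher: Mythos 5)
Your proof is correct. The underlying mechanism is the same as the paper's — write the Fourier transform of the diagonal restriction as a fiber integral of $\widehat G$ over the hyperplane $\eta_1+\cdots+\eta_l=\xi$, apply Cauchy--Schwarz with the Sobolev weight, and bound the resulting weight integral — but you execute it in one shot over all $l-1$ free variables, whereas the paper proceeds by induction, restricting one variable at a time via the claim $\Vert H(\cdot_1,\dots,\cdot_k,\cdot_k)\Vert_{L^2_s((\bbrn)^k)}\lesssim \Vert H\Vert_{L^2_{s+n/2}((\bbrn)^{k+1})}$ and iterating from $k=1$ to $l-1$. The trade-off: the paper's one-variable step only needs the elementary bound $\int_{\bbrn}(A+|x|)^{-n-t}\,dx\lesssim A^{-t}$, at the cost of bookkeeping through $l-1$ stages, while your direct route requires the genuinely multivariable computation — the observation that $|\vec\eta|^2$ is minimized at the balanced point $\eta_j=\xi/l$ on the affine slice, the cancellation of cross terms giving $|\vec\eta|^2=|\xi|^2/l+Q(\vec\zeta)$ with $Q$ positive definite, and the rescaling showing convergence exactly for $s>0$ — which you carry out correctly and which makes the source of the $(l-1)n/2$ loss more transparent. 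Both arguments are complete; the only cosmetic point is that your weight $(1+|\vec\eta|^2)^{s+(l-1)n/2}$ differs from the paper's normalization $(1+4\pi^2|\vec\eta|^2)^{s+(l-1)n/2}$ by harmless constants, and the final change of variables $(\eta_1,\dots,\eta_{l-1},\xi)\mapsto(\eta_1,\dots,\eta_l)$ indeed has Jacobian $1$, so the identification with $\Vert G\Vert^2_{L^2_{s+(l-1)n/2}((\bbrn)^l)}$ is legitimate.
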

 \begin{proof}
 
 We first claim that for each $k=1,\dots,l-1$ and $s>0$,
 \begin{equation}\label{reductionclaim}
 \big\Vert H(\cdot_1,\dots,\cdot_k,\cdot_k)\big\Vert_{L^2_s((\bbrn)^k)}\lesssim \big\Vert H(\cdot_1,\dots,\cdot_k,\cdot_{k+1})\big\Vert_{L^2_{s+n/2}((\bbrn)^{k+1})}
 \end{equation}
 for any Schwartz functions $H$ on $(\bbrn)^{k+1}$.
 For convenience, we will use the notations
 \begin{align*}
 \llbracket \xi_1,\dots,\xi_k \rrbracket&:=\Big( 1+4\pi^2\big( |\xi_1|^2+\dots+|\xi_k|^2\big)\Big)^{1/2},\\
 \llbracket \xi_1,\dots,\xi_k,\xi_{k+1}\rrbracket&:=\Big(1+4\pi^2\big( |\xi_1|^2+\dots+ |\xi_k|^2+ |\xi_{k+1}|^2\big)\Big)^{1/2},
 \end{align*}
 and $\mathcal{F}_{x_{k+1}\to \xi_{k+1}}$ for the Fourier transform of a function with respect to the variable $x_{k+1}$ in $\xi_{k+1}$ variable. 
 Similarly, the symbol $\mathcal{F}_{\xxx\to \xxxi}$ denotes  the Fourier transform of a function with respect to the variable $\xxx:=(x_1,\dots,x_k)$ in $\xxxi:=(\xi_1,\dots,\xi_k)$ variable.
Since
$$H(x_1,\dots,x_k,x_k)=\int_{\bbrn}  \mathcal{F}_{x_{k+1}\to\xi_{k+1}}\big(H(x_1,\dots,x_k,x_{k+1}) \big)(\xi_{k+1})  e^{2\pi i\langle x_k,\xi_{k+1}\rangle}   \; d\xi_{k+1},$$
we have
\begin{align*}
&\mathcal{F}_{\xxx\to\xxxi}\big(H(x_1,\dots,x_k,x_k)\big)(\xi_1,\dots,\xi_k)=\int_{(\bbrn)^k}H(x_1,\dots,x_k,x_k)e^{-2\pi i\langle \xxx,\xxxi\rangle}\; d\xxx\\
&=\int_{\bbrn} \int_{(\bbrn)^k}\mathcal{F}_{x_{k+1}\to\xi_{k+1}}\big(H(x_1,\dots,x_k,x_{k+1})\big)(\xi_{k+1})e^{-2\pi i [(\sum_{j=1}^{k-1}\langle x_j,\xi_j \rangle)+ \langle x_k,\xi_k-\xi_{k+1}\rangle   ]}\; d\xxx d\xi_{k+1}\\
&=\int_{\bbrn}   \wh{H}(\xi_1,\dots,\xi_{k-1},\xi_k-\xi_{k+1},\xi_{k+1})   \; d\xi_{k+1}.
\end{align*}
 By using the Cauchy-Schwarz inequality,
\begin{align*}
&\big| \mathcal{F}_{\xxx\to\xxxi}\big(H(x_1,\dots,x_k,x_k)\big)(\xi_1,\dots,\xi_k)\big| \\
 &\le \Big(\int_{\bbrn} \llbracket \xi_1,\dots,\xi_{k-1},\xi_k-\xi_{k+1},\xi_{k+1}\rrbracket^{-2s-n}\; d\xi_{k+1} \Big)^{1/2}\\
 &\q\times \Big(\int_{\bbrn} \llbracket \xi_1,\dots,\xi_{k-1},\xi_k-\xi_{k+1},\xi_{k+1}\rrbracket^{2s+n} \big|\wh{H}(\xi_1,\dots,\xi_{k-1},\xi_k-\xi_{k+1},\xi_{k+1})\big|^2  \; d\xi_{k+1} \Big)^{1/2}.
\end{align*}
Since 
$$ \llbracket \xi_1,\dots,\xi_{k-1},\xi_k-\xi_{k+1},\xi_{k+1}\rrbracket \sim \llbracket \xi_1,\dots, \xi_{k-1}, \xi_k,\xi_{k+1}\rrbracket,$$
we have 
 \begin{align*}
 &\Big(\int_{\bbrn} \llbracket \xi_1,\dots,\xi_{k-1},\xi_k-\xi_{k+1},\xi_{k+1}\rrbracket^{-2s-n}\; d\xi_{k+1} \Big)^{1/2}\\
 &\sim \Big( \int_{\bbrn}  \llbracket \xi_1,\dots,\xi_{k+1}\rrbracket^{-2s-n} \; d\xi_{k+1} \Big)^{1/2}\lesssim \llbracket \xi_1,\dots,\xi_k \rrbracket^{-s}
  \end{align*}
  where the inequality follows from the fact that
  $$\int_{\bbrn} {(A+|x|)^{-n-t}}\; dx\lesssim A^{-t}$$
  for $t>0$ and $A\ge 1$.
Hence, 
\begin{align*}
&\llbracket \xi_1,\dots,\xi_k \rrbracket^{s}\big| \mathcal{F}_{\xxx\to\xxxi}\big(H(x_1,\dots,x_k,x_k)\big)(\xi_1,\dots,\xi_k)\big| \\
&\lesssim  \Big(\int_{\bbrn} \llbracket \xi_1,\dots,\xi_{k-1},\xi_k-\xi_{k+1},\xi_{k+1}\rrbracket^{2s+n} \big|\wh{H}(\xi_1,\dots,\xi_{k-1},\xi_k-\xi_{k+1},\xi_{k+1})\big|^2  \; d\xi_{k+1} \Big)^{1/2}
\end{align*}
  and this finally yields that
  \begin{align*}
  \big\Vert H(\cdot_1,\dots,\cdot_k,\cdot_k)\big\Vert_{L^2_s((\bbrn)^k)}&\lesssim \Big(\int_{(\bbrn)^{k+1}} \llbracket \xi_1,\dots,\xi_{k+1}\rrbracket^{2s+n} \big|\wh{H}(\xi_1,\dots,\xi_{k+1})\big|^2  \; d\xxxi d\xi_{k+1} \Big)^{1/2}\\
  &=\big\Vert H(\cdot_1,\dots,\cdot_{k+1})\big\Vert_{L^2_{s+{n}/{2}}( (\bbrn)^{k+1})},
  \end{align*}
  which proves \eqref{reductionclaim}.
  
  Now let us complete the proof of \eqref{traceest}.
  Let $G$ be a Schwartz function on $(\bbrn)^l$ and $s>0$.
 Then for each $k=1,\dots,l-1$, by setting
 $$H(x_1,\dots,x_k,x_{k+1})=G(x_1,\dots,x_k,x_{k+1},x_{k+1},\dots,x_{k+1})$$
 and using \eqref{reductionclaim},
 we have
\begin{align*}
\big\| G(\cdot_1,\dots,\cdot_k,\cdot_k,\dots,\cdot_k)\big\|_{L^2_{s+(k-1)n/2}((\bbrn)^k)}
&=\big\| H(\cdot_1,\dots,\cdot_k,\cdot_k) \big\|_{L^2_{s+(k-1)n/2}((\bbrn)^k)}
\\
&\lesssim
\big\| H(\cdot_1,\dots,\cdot_k,\cdot_{k+1}) \big\|_{L^2_{s+kn/2}((\bbrn)^{k+1})}
\\
&= \big\| G(\cdot_1,\dots,\cdot_k,\cdot_{k+1},\dots,\cdot_{k+1})\big\|_{L^2_{s+kn/2}((\bbrn)^{k+1})}.
 \end{align*}
Therefore, starting at $k=1$, we conclude that
\begin{align*}
 \big\Vert G(\cdot,\dots,\cdot)\big\Vert_{L^2_s(\bbrn)}&\lesssim \big\Vert G(\cdot_1,\cdot_2,\cdot_2,\dots,\cdot_2)\big\Vert_{L^2_{s+n/2}((\bbrn)^2)}\\
 &\vdots\\
 &\lesssim \big\Vert G(\cdot_1,\dots,\cdot_k,\cdot_k,\dots,\cdot_k)\big\Vert_{L^2_{s+(k-1)n/2}((\bbrn)^k)}\\
 &\vdots\\
 &\lesssim \big\Vert G(\cdot_1,\dots,\cdot_l)\big\Vert_{L^2_{s+{(l-1)n}/{2}}((\bbrn)^l)}.
\end{align*}
 This finishes the proof.
 \end{proof}

 \subsection{Key Estimates}
 
 Let $P$ be the concentric dilate of $Q$ with $\ell(P)\ge 10\sqrt{n} l \ell(Q)$ and $\chi_P$ indicate the characteristic function of $P$.
We divide 
 \begin{align*}
 f_j= f_j\chi_P+f_j\chi_{P^c}=:f_j^{(0)}+f_j^{(1)} \q \text{ for }~j=1,\dots,l
 \end{align*}
 so that $T_{\sigma_k}(f_1,\dots,f_l)$ can be expressed as the sum of $2^{l}$ different terms of the form
 $$T_{\sigma_k}\big(f_1^{(\mu_1)},f_2^{(\mu_2)},\dots,f_l^{(\mu_l)}\big)$$
 where each $\mu_j$ is $0$ or $1$.

 We now state multilinear analogues of Propositions \ref{linearkeypropo1} and \ref{linearkeypropo2}, which will play a key role in the proof of Theorem \ref{mainpointest}.
 \begin{proposition}\label{keypropo1}
Let $0< \rho<1$, $1< r\le 2$,  $m=-\frac{nl}{r}(1-\rho)$, and $k\in\bbn_0$. Suppose that $x\in Q$.
\begin{enumerate}
\item Suppose that $0< \rho<\frac{r}{2l}$.
Then every $\sigma\in \mathbb{M}_lS_{\rho,\rho}^{m}(\bbrn)$ satisfies
 $$\frac{1}{|Q|}\int_Q \big| T_{\sigma_k}\big(f_1^{(0)},\dots,f^{(0)}_l \big)(y)\big| \; dy\lesssim \Big(\frac{\ell(P)}{\ell(Q)^{\rho}} \Big)^{\frac{nl}{r}}\mathbf{M}_r\big(f_1,\dots,f_l\big)(x).$$

\item 
Suppose that $\frac{r}{2l}\le \rho <1$ and $\frac{2\rho l-r}{2l-r}<\lambda<\rho$.
 Then every $\sigma\in \mathbb{M}_lS_{\rho,\rho}^{m}(\bbrn)$ satisfies
 $$\frac{1}{|Q|}\int_Q \big| T_{\sigma_k}\big(f_1^{(0)},\dots,f_l^{(0)} \big)(y)\big| \; dy\lesssim \big( 2^k \ell(Q)\big)^{\lambda n \frac{l(1-\rho)}{r(1-\lambda)} }\Big(\frac{\ell(P)}{\ell(Q)^{\rho}} \Big)^{\frac{nl}{r}}\mathbf{M}_r\big(f_1,\dots,f_l\big)(x).$$
 \end{enumerate}
 \end{proposition}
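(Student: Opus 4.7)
The plan is to run the same strategy as in the linear Proposition~\ref{linearkeypropo1}, but with the multilinear Miyachi--Yabuta type estimate (the forthcoming Lemma~\ref{Naibopropo}) replacing Lemma~\ref{MiYapropo}. That multilinear boundedness should read, roughly, that for $0<\rho'\le r/(2l)$ and any $\tau\in\mathbb{M}_lS^{-\frac{nl}{r}(1-\rho')}_{\rho',\rho'}(\bbrn)$,
\[
\big\Vert T_{\tau}(g_1,\dots,g_l)\big\Vert_{L^{r/(l\rho')}(\bbrn)}
\lesssim \prod_{j=1}^{l}\Vert g_j\Vert_{L^r(\bbrn)}.
\]
Granted this, the main task is purely book-keeping: choose the right exponents, apply H\"older, and, for part~(2), use the dilation identity \eqref{tsigkf1lest} to pull back to the admissible range of the lemma.

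For part~(1), the hypothesis $\rho<r/(2l)$ gives $l\rho/r<1$, so H\"older's inequality yields
\[
\frac{1}{|Q|}\int_Q \big| T_{\sigma_k}\big(f_1^{(0)},\dots,f_l^{(0)}\big)(y)\big|\;dy
\le \frac{1}{|Q|^{l\rho/r}}\big\Vert T_{\sigma_k}\big(f_1^{(0)},\dots,f_l^{(0)}\big)\big\Vert_{L^{r/(l\rho)}(\bbrn)}.
\]
Applying Lemma~\ref{Naibopropo} with $\rho'=\rho$ bounds the right-hand side by
$\ell(Q)^{-nl\rho/r}\prod_j\Vert f_j\Vert_{L^r(P)}$. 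Factoring $|P|^{l/r}$ out of the $L^r(P)$ norms and using that $x\in P$ gives the product of $L^r$-averages over $P$, which is at most $\mathbf{M}_r(f_1,\dots,f_l)(x)$, produces exactly $\big(\ell(P)/\ell(Q)^\rho\big)^{nl/r}\mathbf{M}_r(f_1,\dots,f_l)(x)$, as claimed.

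For part~(2), with $\lambda\in\big(\tfrac{2\rho l-r}{2l-r},\rho\big)$, set $\rho':=(\rho-\lambda)/(1-\lambda)$; the choice of $\lambda$ is equivalent to $\rho'<r/(2l)$, and one checks that $m/(1-\lambda)=-\frac{nl}{r}(1-\rho')$, so by \eqref{taukinmlsrn} the symbol $\tau_k$ lies in $\mathbb{M}_lS^{-\frac{nl}{r}(1-\rho')}_{\rho',\rho'}$ uniformly in $k$ and Lemma~\ref{Naibopropo} applies to $T_{\tau_k}$. The conjugation identity \eqref{tsigkf1lest}, combined with the scaling of the $L^{r/(l\rho')}$ and $L^r$ norms, yields
\[
\big\Vert T_{\sigma_k}\big(f_1^{(0)},\dots,f_l^{(0)}\big)\big\Vert_{L^{r/(l\rho')}(\bbrn)}
\lesssim 2^{\lambda k n l(1-\rho)/(r(1-\lambda))}\prod_{j=1}^{l}\Vert f_j\Vert_{L^r(P)}.
\]
H\"older (with exponent $r/(l\rho')>1$) and the averaging argument from part~(1) then give the bound with factor $\big(\ell(P)/\ell(Q)^{\rho'}\big)^{nl/r}\,2^{\lambda knl(1-\rho)/(r(1-\lambda))}$. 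The final step is to rewrite $\ell(Q)^{-\rho'}=\ell(Q)^{\rho-\rho'}\ell(Q)^{-\rho}$ and note the identity $\rho-\rho'=\lambda(1-\rho)/(1-\lambda)$, so that $\ell(Q)^{\rho-\rho'}$ has the same exponent as $2^k$; merging them yields the advertised $(2^k\ell(Q))^{\lambda nl(1-\rho)/(r(1-\lambda))}\big(\ell(P)/\ell(Q)^\rho\big)^{nl/r}\mathbf{M}_r(f_1,\dots,f_l)(x)$.

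The calculations are essentially mechanical once Lemma~\ref{Naibopropo} is in hand; the only delicate point is to verify the algebraic equivalence $\lambda>(2\rho l-r)/(2l-r)\iff\rho'<r/(2l)$ together with the matching identity $m/(1-\lambda)=-\frac{nl}{r}(1-\rho')$, so that the rescaled symbol lives in the precisely admissible subcritical class. Beyond that, the only substantive step is the tracking of the two scalings (the one from $\vec{f}^{(0)}\mapsto \vec{f}^{(0)}(2^{-\lambda k}\cdot)$ and the one from H\"older with the shifted exponent $\rho'$), whose joint contribution coalesces into the clean factor $(2^k\ell(Q))^{\lambda nl(1-\rho)/(r(1-\lambda))}$.
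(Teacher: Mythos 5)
Your proposal is correct and follows essentially the same route as the paper: part (1) is H\"older plus Lemma \ref{Naibopropo} applied to $\sigma_k$ and the averaging over $P$, while part (2) is exactly the content of the paper's Lemma \ref{rhoge12} (the rescaling $\tau_k$, the identities $\rho'=\frac{\rho-\lambda}{1-\lambda}<\frac{r}{2l}$ and $\frac{m}{1-\lambda}=-\frac{nl}{r}(1-\rho')$, and the dilation bookkeeping) followed by the same H\"older/averaging step. The only nitpick is that the paper's Lemma \ref{Naibopropo} requires the strict inequality $\rho<\frac{r}{2l}$ (unlike the linear Lemma \ref{MiYapropo}), but since both of your applications use strictly subcritical exponents this is harmless.
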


 \begin{proposition}\label{keypropo2}
 Let $0\le \rho<1$, $1\le r\le 2$, $m=-\frac{nl}{r}(1-\rho)$, and $k\in\bbn_0$. Suppose that $x,y\in Q$.
 Then every $\sigma\in \mathbb{M}_lS_{\rho,\rho}^{m}(\bbrn)$ satisfies
 \begin{equation}\label{keypropo2_1}
 \big| T_{\sigma_k}\big(f_1^{(\mu_1)},\dots,f_l^{(\mu_l)}\big)(y)\big|\lesssim_N \big( 2^{k\rho} \ell(P)\big)^{-(N-\frac{nl}{r})}\mathbf{M}_r\big(f_1,\dots,f_l\big)(x)
\end{equation}
and
\begin{align}\label{keypropo2_2}
& \big| T_{\sigma_k}\big(f_1^{(\mu_1)},\dots,f_l^{(\mu_l)}\big)(y)-T_{\sigma_k}\big(f_1^{(\mu_1)},\dots,f_l^{(\mu_l)}\big)(x)\big|\nonumber\\
&\lesssim_N 2^k\ell(Q)\big( 2^{k\rho} \ell(P)\big)^{-(N-\frac{nl}{r})}\mathbf{M}_r\big(f_1,\dots,f_l\big)(x)
\end{align}
for any $N>\frac{nl}{r}$, when $\mu_j$ is $0$ or $1$, excluding $\mu_1=\cdots=\mu_l=0$.
\end{proposition}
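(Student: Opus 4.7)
The plan is to mirror the strategy used in the proof of Proposition~\ref{linearkeypropo2}, substituting the multilinear kernel bounds of Lemma~\ref{kernelest} for their linear counterparts and producing a bound in terms of $\mathbf{M}_r(\vec{f})$ rather than the larger product $\prod_j\mathrm{M}_rf_j$. Let $S:=\{j\in\{1,\dots,l\}:\mu_j=1\}$, which is nonempty by hypothesis. Writing
$$T_{\sigma_k}\bigl(f_1^{(\mu_1)},\dots,f_l^{(\mu_l)}\bigr)(y)=\int_{\prod_jD_j}K_k(y,y-u_1,\dots,y-u_l)\prod_{j=1}^lf_j(u_j)\,d\uuu,$$
with $D_j=P$ if $\mu_j=0$ and $D_j=P^c$ if $\mu_j=1$, I would apply H\"older's inequality in $L^{r'}$-$L^r$ duality using the weight
$$W(y-u):=\ell(P)^{nl/r}\prod_{j\in S}\Bigl(\frac{|y-u_j|}{\ell(P)}\Bigr)^{N_j},$$
with sufficiently large exponents $N_j>nl/r$ and total $N:=\sum_{j\in S}N_j$.

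For the kernel piece, changing variables $v_j=y-u_j$, using $|v_j|^{N_j}\le 2^{-k\rho N_j}(1+2^{k\rho}|v_j|)^{N_j}$, and invoking Lemma~\ref{kernelest} with these weights yields
$$\|W\cdot K_k\|_{L^{r'}}\lesssim \ell(P)^{nl/r-N}\,2^{-k\rho N}\,2^{k(m+nl/r)}=(2^{k\rho}\ell(P))^{-(N-nl/r)},$$
where the last identity uses $m=-nl(1-\rho)/r$, so $2^{k(m+nl/r)}=(2^{k\rho})^{nl/r}$. For the $L^r$ piece, each $u_j$ with $j\in S$ satisfies $|y-u_j|\gtrsim\ell(P)+|x-u_j|$ since $u_j\in P^c$ and $y\in Q\subset P$. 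I would decompose such $u_j$ into dyadic annuli $|x-u_j|\sim 2^{s_j}\ell(P)$, $s_j\in\mathbb{N}_0$, while $u_j\in P$ for $j\notin S$ forces $s_j=0$. The product annulus $A_{\vec s}$ is then contained in $R_{\vec s}^l$, where $R_{\vec s}$ is an axis-parallel cube centered at $x$ of side $\sim 2^{s^*}\ell(P)$ with $s^*:=\max_{j\in S}s_j$, and hence by the definition of $\mathbf{M}_r$,
$$\int_{A_{\vec s}}\prod_{j=1}^l|f_j(u_j)|^r\,d\uuu\le |R_{\vec s}|^l\,\mathbf{M}_r(f_1,\dots,f_l)(x)^r.$$
Summing the resulting geometric series in $\vec s\in\mathbb{N}_0^{|S|}$, which converges by bounding $2^{nl\,s^*}\le\prod_{j\in S}2^{nl\,s_j}$ and choosing each $N_j>nl/r$, produces $\|W^{-1}\prod_jf_j^{(\mu_j)}\|_{L^r}\lesssim \mathbf{M}_r(\vec f)(x)$. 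Combining this with the kernel estimate establishes \eqref{keypropo2_1}.

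For the difference estimate \eqref{keypropo2_2}, I would repeat the same H\"older scheme with $K_k(y,y-\vec u)-K_k(x,x-\vec u)$ in place of $K_k(y,y-\vec u)$. The fundamental theorem of calculus along the segment $y(t):=ty+(1-t)x\in Q$ bounds this difference by
$$|y-x|\int_0^1\Bigl(|\nabla_yK_k|+\sum_{\mathrm{i}=1}^l|\nabla_{u_{\mathrm{i}}}K_k|\Bigr)\bigl(y(t),y(t)-\vec u\bigr)\,dt.$$
In the kernel step, the $\nabla_{u_{\mathrm{i}}}K_k$ terms dominate, since Lemma~\ref{kernelest} gives them the size $2^{k(1+m+nl/r)}$ versus only $2^{k(\rho+m+nl/r)}$ for $\nabla_yK_k$; this inserts an extra factor of $|y-x|\cdot 2^k\lesssim 2^k\ell(Q)$. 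Since $|y(t)-u_j|\sim|y-u_j|$ for $u_j\in P^c$ (as $\ell(P)\ge 10\sqrt{n}l\,\ell(Q)$), the $L^r$ computation is unchanged.

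The main obstacle is the $L^r$ step in the proof of \eqref{keypropo2_1}: one must consolidate the dyadic shells coming from all directions $j\in S$ into a single axis-parallel cube $R_{\vec s}$ about $x$, so that the definition of the multi-sublinear operator $\mathbf{M}_r$ is applied at a single scale. A naive factorization of the integral over the $l$ variables would instead produce $\prod_j\mathrm{M}_rf_j(x)$, which is strictly larger than $\mathbf{M}_r(\vec f)(x)$ and therefore too weak to match the statement.
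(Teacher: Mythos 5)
Your overall scheme is the same as the paper's (H\"older duality with a polynomial weight, the kernel bounds of Lemma \ref{kernelest} after pulling the weight inside via $|v|^{N}\le 2^{-k\rho N}(1+2^{k\rho}|v|)^{N}$, the observation $|y-u_j|\gtrsim \ell(P)+|x-u_j|$ for $u_j\in P^c$, consolidation of all $l$ variables into a single cube centered at $x$ so that $\mathbf{M}_r$ rather than $\prod_j \mathrm{M}_rf_j$ appears, and the fundamental-theorem-of-calculus argument with the dominant $\nabla_{u_{\mathrm{i}}}K_k$ bound for the difference estimate). However, there is one genuine gap: your weight is the \emph{product} $\prod_{j\in S}(|y-u_j|/\ell(P))^{N_j}$, and your dyadic summation forces \emph{each} $N_j>\frac{nl}{r}$ (the worst case in your sum over $\vec s$ is when a single $s_{j_0}$ equals $s^*$ and the others vanish, so convergence requires $rN_{j_0}>nl$ for every $j_0\in S$). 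Consequently you only obtain \eqref{keypropo2_1} and \eqref{keypropo2_2} for total exponents $N=\sum_{j\in S}N_j>|S|\,\frac{nl}{r}$, whereas the statement asserts them for every $N>\frac{nl}{r}$. When $2^{k\rho}\ell(P)<1$ the bound with a larger exponent is \emph{weaker}, so the missing range cannot be recovered a posteriori; and this range is exactly what is used later (in Section 6 one must take $\frac{nl}{r}<N<\frac{nl}{r}+\frac{1}{\rho}$ so that $1-\rho(N-\frac{nl}{r})>0$), so the restriction is not cosmetic.

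The fix is what the paper does: use the single ``sum'' weight $\ell(P)^{nl/r}\bigl((|y-u_{j_1}|+\dots+|y-u_{j_{|S|}}|)/\ell(P)\bigr)^{N}$, $\{j_1,\dots,j_{|S|}\}=S$. On the kernel side this still reduces to Lemma \ref{kernelest}, since $\bigl(\sum_{j\in S}|u_j|\bigr)^{N}\le 2^{-k\rho N}\prod_{j\in S}(1+2^{k\rho}|u_j|)^{N}$. On the $L^r$ side, $\sum_{j\in S}|y-u_j|\gtrsim \ell(P)+\sum_{j=1}^l|x-u_j|$ (the coordinates with $\mu_j=0$ contribute at most a multiple of $\ell(P)$), and the resulting integral $\int_{(\bbrn)^l}\ell(P)^{-nl}\bigl(1+\ell(P)^{-1}\sum_j|x-u_j|\bigr)^{-rN}\prod_j|f_j(u_j)|^r\,d\uuu$ is controlled, by decomposing according to $\max_j|x-u_j|\sim 2^t\ell(P)$ (only polynomially many shell-configurations per value of $t$), by $\mathbf{M}_r(f_1,\dots,f_l)(x)^r$ for every $N>\frac{nl}{r}$. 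With that single change your argument for both \eqref{keypropo2_1} and \eqref{keypropo2_2} matches the paper's proof.
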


 In order to prove the propositions, we first need the following lemma, which is a generalization of Lemmas \ref{MiYapropo} and \cite[Lemma 2.1]{Na2015} that dealt with the case $r=2$ and $l=2$.
    \begin{lemma}\label{Naibopropo}
  Let $1< r\le 2$, $0< \rho< \frac{r}{2l}$, and $m=-\frac{nl}{r}(1-\rho)$.
 Then every $\sigma\in \mathbb{M}_lS_{\rho,\rho}^m(\bbrn)$ satisfies
 $$\big\Vert T_{\sigma}(f_1,\dots,f_l) \big\Vert_{L^{\frac{r}{l\rho}}(\bbrn)}\lesssim \prod_{j=1}^{l}\Vert f_j\Vert_{L^r(\bbrn)}$$
 for $f_1,\dots,f_l\in \mathscr{S}(\bbrn)$.
 \end{lemma}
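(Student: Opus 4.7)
The plan is to prove the estimate by reducing, via the trace theorem (Lemma~\ref{tracethm}) combined with a Sobolev embedding, to a weighted $L^2$-bound on an auxiliary function living on $(\bbrn)^l$; that weighted bound will then follow from Plancherel, the symbol-class estimates, and a weighted Hausdorff--Young-type inequality. This approach avoids any Littlewood--Paley decomposition of $T_\sigma$.

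Set $q:=r/(l\rho)$, which exceeds $2$ under the hypothesis $\rho<r/(2l)$, and let $s:=n\bigl(\tf{1}{2}-\tf{l\rho}{r}\bigr)=\f{n(r-2l\rho)}{2r}>0$. The Sobolev embedding $L^2_s(\bbrn)\hookrightarrow L^q(\bbrn)$ reduces the problem to bounding $\|T_\sigma(f_1,\dots,f_l)\|_{L^2_s(\bbrn)}$. Introduce
$$H(y_1,\dots,y_l):=\int_{(\bbrn)^l}\sigma(y_1,\xxxi)\prod_{j=1}^l\wh{f_j}(\xi_j)\,e^{2\pi i\sum_{j=1}^l\langle y_j,\xi_j\rangle}\,d\xxxi,$$
and observe that $H(x,x,\dots,x)=T_\sigma(f_1,\dots,f_l)(x)$. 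Applying Lemma~\ref{tracethm} to the diagonal restriction yields
$$\|T_\sigma(f_1,\dots,f_l)\|_{L^2_s(\bbrn)}\lesssim\|H\|_{L^2_{s'}((\bbrn)^l)},\qquad s':=s+\f{(l-1)n}{2}=\f{nl(r-2\rho)}{2r},$$
so the main task reduces to estimating $\|H\|_{L^2_{s'}((\bbrn)^l)}$.

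Taking the Fourier transform of $H$ in all $l$ variables (the variables $y_2,\dots,y_l$ appear only inside exponentials, producing Dirac masses, while $y_1$ appears also in $\sigma$) gives
$$\wh{H}(\eta_1,\dots,\eta_l)=\Bigl(\int\wh{\sigma}_1(\eta_1-\xi_1,\xi_1,\eta_2,\dots,\eta_l)\wh{f_1}(\xi_1)\,d\xi_1\Bigr)\prod_{j\ge 2}\wh{f_j}(\eta_j),$$
where $\wh{\sigma}_1$ denotes the partial Fourier transform of $\sigma(x,\xxxi)$ in $x$. A Cauchy--Schwarz step in $\xi_1$ with the weight $(1+|\xi_1|)^{n(2-r)/r}$, together with the dual Sobolev embedding $\int(1+|\xi|)^{-n(2-r)/r}|\wh{f}(\xi)|^2\,d\xi\lesssim\|f\|_{L^r}^2$, peels off a factor of $\|f_1\|_{L^r}^2$; the remaining weighted $\wh{\sigma}_1$-integral is controlled via the symbol-class decay $|\wh{\sigma}_1(\zeta,\xxxi)|\lesssim_N(1+|\xxxi|)^m(1+(1+|\xxxi|)^{-\rho}|\zeta|)^{-N}$ obtained by integration by parts in $x$. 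The arithmetic identity $2s'+2m=-nl(2-r)/r$ then shows that the resulting weight on $(\eta_2,\dots,\eta_l)$ is effectively $(1+|\eeta|)^{-nl(2-r)/r}$, and the elementary bound $(1+|\eeta|)^{-t}\le\prod_{j=1}^l(1+|\eta_j|)^{-t/l}$ (valid for $t\ge 0$, since each $(1+|\eta_j|)\le(1+|\eeta|)$) distributes the weight evenly so that one further application of the same dual Sobolev embedding to each remaining $\wh{f_j}$ yields
$$\|H\|_{L^2_{s'}((\bbrn)^l)}^2\lesssim\prod_{j=1}^l\|f_j\|_{L^r(\bbrn)}^2.$$

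The main technical obstacle is the rigorous interpretation of $\wh{\sigma}_1$: since $\sigma\in\mathbb{M}_lS_{\rho,\rho}^m$ is only bounded (not integrable) in its $x$-variable, this partial Fourier transform exists \emph{a priori} only as a tempered distribution. This will be handled by first replacing $\sigma$ by $\sigma\chi_R$, where $\chi_R$ is a smooth bump supported in $\{|x|\le R\}$, running all the estimates for the compactly supported symbol $\sigma\chi_R$ with constants independent of $R$, and then passing to the limit $R\to\infty$; the polynomial decay of $\wh{\sigma}_1$ at scale $(1+|\xxxi|)^\rho$ together with the strict inequality $\rho<r/(2l)$ guarantees the convergence of all intermediate integrals.
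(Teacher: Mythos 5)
Your reduction is exactly the paper's: Sobolev embedding $L^2_s(\bbrn)\hookrightarrow L^{\frac{r}{l\rho}}(\bbrn)$ with $s=n(\tfrac12-\tfrac{l\rho}{r})$, then Lemma \ref{tracethm} applied to $H=T_\Sigma(f_1\otimes\cdots\otimes f_l)$, where $\Sigma(X,\Xi):=\sigma(x_1,\xi_1,\dots,\xi_l)$, reducing matters to $\Vert H\Vert_{L^2_{s'}((\bbrn)^l)}\lesssim\prod_j\Vert f_j\Vert_{L^r}$ with $s'=nl(\tfrac12-\tfrac{\rho}{r})$. The gap is in how you propose to prove this last bound. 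The key estimate you invoke, $|\wh{\sigma}_1(\zeta,\xxxi)|\lesssim_N(1+|\xxxi|)^m\bigl(1+(1+|\xxxi|)^{-\rho}|\zeta|\bigr)^{-N}$, is false: since $\sigma(\cdot,\xxxi)$ has no decay in $x$ (only boundedness of its derivatives), its partial Fourier transform in $x$ is genuinely a tempered distribution in $\zeta$ — for an $x$-independent symbol $\sigma(x,\xxxi)=a(\xxxi)$ it is $a(\xxxi)\,\delta_0(\zeta)$ — and no pointwise bound of this type can hold. The integration-by-parts derivation already fails at the base case $\alpha=0$, which would require $\int_{\bbrn}|\sigma(x,\xxxi)|\,dx<\infty$. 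Your truncation does not repair this: for $\sigma\chi_R$ the relevant constants grow like a power of $R$ (e.g.\ for $x$-independent symbols the first Cauchy--Schwarz factor contains $\int|\wh{\chi_R}(\eta_1-\xi_1)|^2\,d\xi_1\sim R^n$), so the bound obtained from your scheme blows up as $R\to\infty$ even though the quantity being estimated stays finite, and the limiting argument yields nothing. This is not a technicality: the absence of $x$-decay is precisely what makes $L^2$ bounds for $S_{\rho,\rho}$ symbols nontrivial (Calder\'on--Vaillancourt), and it cannot be bypassed by a direct Fourier-side Cauchy--Schwarz on $\wh{\sigma}_1$.

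The paper closes exactly this step differently: it observes that $\Sigma\in S^m_{\rho,\rho}(\bbr^{nl})$ as a \emph{linear} symbol, that $(I-\Delta)^{\frac12 nl(\frac12-\frac{\rho}{r})}T_\Sigma(I-\Delta)^{\frac12 nl(\frac1r-\frac12)}$ has symbol in $S^0_{\rho,\rho}(\bbr^{nl})$, and invokes Theorem \ref{thma} (Calder\'on--Vaillancourt--Fefferman) for its $L^2(\bbr^{nl})$ boundedness, after which the Sobolev embedding $L^r(\bbr^{nl})\hookrightarrow L^2_{-nl(\frac1r-\frac12)}(\bbr^{nl})$ (your ``dual Sobolev embedding'', valid since $r>1$) gives $\Vert T_\Sigma G\Vert_{L^2_{s'}}\lesssim\Vert G\Vert_{L^r}$; your arithmetic identity $2s'+2m=-nl(2-r)/r$ is the same bookkeeping in disguise. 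To make your proof correct, replace the claimed pointwise decay of $\wh{\sigma}_1$ by this appeal to Theorem \ref{thma} (or by an honest almost-orthogonality argument reproving it); as written, the central estimate of your proposal does not hold.
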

 \begin{proof}
 The proof is based on the ideas of \cite[Lemma 5.6]{Mi_Ya1987} and \cite[Lemma 2.1]{Na2015}, using the generalized trace theorem in Lemma \ref{tracethm}.
 We first define the linear symbol $\Sigma$ in $\bbr^{nl}\times \bbr^{nl}$ as
 $$\Sigma(X,\Xi):=\sigma(x_1,\xi_1,\dots,\xi_l),\q X:=(x_1,\dots,x_l)\in (\bbr^{n})^l, ~ \Xi:=(\xi_1,\dots,\xi_l)\in (\bbr^{n})^l.$$
 Then it is obvious that the symbol $\Sigma$ belongs to the linear H\"ormander class $S_{\rho,\rho}^m(\bbr^{nl})$.
 We notice that the operator
 $$  (I-\Delta)^{\frac{1}{2}nl(\frac{1}{2}-\frac{\rho}{r})}T_{\Sigma} (I-\Delta)^{\frac{1}{2}nl(\frac{1}{r}-\frac{1}{2})}$$
 is a pseudo-differential operator associated with a symbol in $S_{\rho,\rho}^{0}(\bbr^{nl})$, as 
 $$nl\Big(\frac{1}{2}-\frac{\rho}{r}\Big)+m+nl\Big(\frac{1}{r}-\frac{1}{2}\Big)=0,$$
 where $\Delta$ is the Laplacian operator on $\bbr^{nl}$.
Then its $L^2(\bbr^{nl})$ boundedness, in view of Theorem \ref{thma}, deduces that
 \begin{align*}
 \big\Vert T_{\Sigma}(G)\big\Vert_{L^2_{nl(\frac{1}{2}-\frac{\rho}{r})}(\bbr^{nl})}
 &=\Big\Vert \Big( (I-\Delta)^{\frac{1}{2}nl(\frac{1}{2}-\frac{\rho}{r})}T_{\Sigma}(I-\Delta)^{\frac{1}{2}nl(\frac{1}{r}-\frac{1}{2})}    \Big)  (I-\Delta)^{-\frac{1}{2}nl(\frac{1}{r}-\frac{1}{2})}G   \Big\Vert_{L^2(\bbr^{nl})}\\
 &\lesssim   \Big\Vert  (I-\Delta)^{-\frac{1}{2}nl(\frac{1}{r}-\frac{1}{2})}G\Big\Vert_{L^2(\bbr^{nl})}              =\big\Vert G\big\Vert_{L^2_{-nl(\frac{1}{r}-\frac{1}{2})}(\bbr^{nl})}\lesssim \Vert G\Vert_{L^r(\bbr^{nl})}
 \end{align*}
 for any Schwartz functions $G$ on $\bbr^{nl}$,
 where the last inequality simply follows from the Sobolev embedding theorem, but we note that $r>1$.
 Therefore, by the embedding $L_{n(\frac{1}{2}-\frac{l\rho}{r})}^2(\bbrn)\hookrightarrow L^{\frac{r}{l\rho}}(\bbrn)$ and Lemma \ref{tracethm} with $s=n(\frac{1}{2}-\frac{l\rho}{r})$,
 we obtain
 \begin{align*}
 \big\Vert T_{\sigma}(f_1,\dots,f_l)\big\Vert_{L^{\frac{r}{l\rho}}(\bbrn)}&\lesssim \big\Vert T_{\sigma}(f_1,\dots,f_l)\big\Vert_{L_{n(\frac{1}{2}-\frac{l\rho}{r})}^2(\bbrn)}\\
 &\lesssim  \big\Vert T_{\Sigma}(f_1\otimes\cdots\otimes f_l)\big\Vert_{L^2_{nl(\frac{1}{2}-\frac{\rho}{r})}(\bbr^{nl})}\\
 &\lesssim \Vert f_1\otimes\cdots\otimes f_l\Vert_{L^r(\bbr^{nl})}=\prod_{j=1}^{l}\Vert f_j\Vert_{L^r(\bbrn)}
 \end{align*}
 where the condition $\frac{1}{2}-\frac{l\rho}{r}>0$ (equivalently, $\rho<\frac{r}{2l}$) is required in the second inequality.
This completes the proof.
 \end{proof}
 
The following lemma covers the remaining case $\frac{r}{2l}\le \rho<1$ where Lemma \ref{Naibopropo} cannot be applied.
  \begin{lemma}\label{rhoge12}
 Let $1< r\le 2$, $\frac{r}{2l} \le \rho<1$, $m=-\frac{nl}{r}(1-\rho)$, and $k\in\bbn_0$.
 Suppose that 
 $$\frac{2\rho l-r}{2l-r}<\lambda<\rho.$$
 Then every $\sigma\in \mathbb{M}_lS_{\rho,\rho}^{m}(\bbrn)$ satisfies
 \begin{equation*}
 \big\Vert T_{\sigma_k}(f_1,\dots,f_l) \big\Vert_{L^{\frac{r(1-\lambda)}{l(\rho-\lambda)}}(\bbrn)}\lesssim 2^{\lambda n k \frac{l(1-\rho)}{r(1-\lambda)} }\prod_{j=1}^{l}\Vert f_j\Vert_{L^r(\bbrn)}
 \end{equation*}
 for $f_1,\dots,f_l\in\mathscr{S}(\bbrn)$.
 \end{lemma}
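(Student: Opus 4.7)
The strategy is to imitate the proof of Lemma \ref{linearrhoge12}, but with Lemma \ref{Naibopropo} (the generalized trace-theorem Lebesgue bound) playing the role of Lemma \ref{MiYapropo} so as to handle the multilinear character of $T_{\sigma_k}$. The key point is that a $2^{\lambda k}$-dilation in frequency converts the large-$\rho$ situation into a small-$\rho$ situation where Lemma \ref{Naibopropo} is available, at the cost of a dilation factor that produces exactly the growth in $k$ asserted by the lemma.

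Concretely, I would fix $\lambda$ in the stated range and introduce $\tau_k$ as in \eqref{taudef}. By \eqref{taukinmlsrn} we have $\tau_k\in \mathbb{M}_l S^{m/(1-\lambda)}_{(\rho-\lambda)/(1-\lambda),(\rho-\lambda)/(1-\lambda)}(\bbrn)$ uniformly in $k$. A short algebraic check shows that the condition $\frac{2\rho l-r}{2l-r}<\lambda<\rho$ is precisely equivalent to
$$0<\frac{\rho-\lambda}{1-\lambda}<\frac{r}{2l},$$
so the rescaled symbol $\tau_k$ sits in the regime where Lemma \ref{Naibopropo} applies. The corresponding order condition also holds, since
$$\frac{m}{1-\lambda}=-\frac{nl}{r}\cdot\frac{1-\rho}{1-\lambda}=-\frac{nl}{r}\Bigl(1-\frac{\rho-\lambda}{1-\lambda}\Bigr).$$
Hence Lemma \ref{Naibopropo} yields
$$\big\Vert T_{\tau_k}(g_1,\dots,g_l)\big\Vert_{L^{\frac{r(1-\lambda)}{l(\rho-\lambda)}}(\bbrn)}\lesssim \prod_{j=1}^{l}\Vert g_j\Vert_{L^r(\bbrn)}$$
uniformly in $k$.

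To transfer this back to $T_{\sigma_k}$, I would invoke the intertwining identity \eqref{tsigkf1lest}:
$$T_{\sigma_k}(f_1,\dots,f_l)(x)=T_{\tau_k}\bigl(f_1(2^{-\lambda k}\cdot),\dots,f_l(2^{-\lambda k}\cdot)\bigr)(2^{\lambda k}x),$$
take the $L^{q}$-norm with $q=\frac{r(1-\lambda)}{l(\rho-\lambda)}$, and then pull out dilation factors. Writing $g_j:=f_j(2^{-\lambda k}\cdot)$, one gets a factor $2^{-\lambda k n/q}$ from dilating $x\mapsto 2^{\lambda k}x$ and a factor $\prod_{j=1}^{l}2^{\lambda k n/r}$ from $\Vert g_j\Vert_{L^r}=2^{\lambda k n/r}\Vert f_j\Vert_{L^r}$. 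Combining these yields a dilation factor of $2^{\lambda k n(l/r-1/q)}$, and the routine identity
$$\frac{l}{r}-\frac{1}{q}=\frac{l(1-\lambda)-l(\rho-\lambda)}{r(1-\lambda)}=\frac{l(1-\rho)}{r(1-\lambda)}$$
reproduces the claimed constant $2^{\lambda n k\frac{l(1-\rho)}{r(1-\lambda)}}$.

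The only nontrivial steps are the two algebraic verifications (that the constraint on $\lambda$ really gives $\frac{\rho-\lambda}{1-\lambda}<\frac{r}{2l}$, and that the dilation exponent works out), together with confirming \eqref{taukinmlsrn} (itself an elementary chain-rule computation of the same type as \eqref{linearckins}). No genuine obstacle arises, since all the analytic weight has been moved into Lemma \ref{Naibopropo}; the rescaling device is the same one used in the linear case, and the multilinear Hardy–Littlewood maximal structure is not yet needed here.
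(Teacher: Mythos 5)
Your proposal is correct and follows essentially the same route as the paper: rescale via $\tau_k$ from \eqref{taudef}, check that the constraint on $\lambda$ places $\tau_k$ in the regime $0<\frac{\rho-\lambda}{1-\lambda}<\frac{r}{2l}$ with the matching critical order so Lemma \ref{Naibopropo} applies uniformly in $k$, and then transfer back through \eqref{tsigkf1lest}, with the dilation bookkeeping producing the factor $2^{\lambda nk\frac{l(1-\rho)}{r(1-\lambda)}}$ exactly as in the paper's proof.
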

 \begin{proof}
 Let $\frac{2\rho l-r}{2l-r}<\lambda<\rho$ and $\tau_k$ be defined as in \eqref{taudef} via dilation.
Then we recall in \eqref{taukinmlsrn} that
 \begin{equation*}
 \tau_k\in \mathbb{M}_lS_{\frac{\rho-\lambda}{1-\lambda},\frac{\rho-\lambda}{1-\lambda}}^{\frac{m}{1-\lambda}}(\bbrn)\q \text{ uniformly in }~k.
 \end{equation*} 
 and observe that
$$0<\frac{\rho-\lambda}{1-\lambda}<\frac{r}{2l} \q \text{ and }\q \frac{m}{1-\lambda}=-\frac{nl}{r}\Big(1-\frac{\rho-\lambda}{1-\lambda}\Big).$$
 In view of \eqref{tsigkf1lest}, we have
 \begin{equation*}
 \big\Vert T_{\sigma_k}(f_1,\dots,f_l)\big\Vert_{L^{\frac{r(1-\lambda)}{l(\rho-\lambda)}}(\bbrn)}=2^{-\lambda nk \frac{l(\rho-\lambda)}{r(1-\lambda)}}\big\Vert T_{\tau_k}\big(f_1(2^{-\lambda k}\cdot),\dots,f_l(2^{-\lambda k}\cdot) \big)\big\Vert_{L^{\frac{r(1-\lambda)}{l(\rho-\lambda)}}(\bbrn)}
 \end{equation*}
and then
Lemma \ref{Naibopropo} yields that
\begin{align*}
\big\Vert T_{\tau_k}\big(f_1(2^{-\lambda k}\cdot),\dots,f_l(2^{-\lambda k}\cdot) \big)\big\Vert_{L^{\frac{r(1-\lambda)}{l(\rho-\lambda)}}(\bbrn)}&\lesssim \prod_{j=1}^{l}\big\Vert f_j(2^{-\lambda k}\cdot)\big\Vert_{L^r(\bbrn)}=2^{\lambda n k\frac{l}{r}}\prod_{j=1}^{l}\Vert f_j\Vert_{L^r(\bbrn)}
\end{align*}
where the constant in the inequality is independent of $k$.
Combining these estimates, we finally obtain
\begin{equation*}
 \big\Vert T_{\sigma_k}(f_1,\dots,f_l)\big\Vert_{L^{\frac{r(1-\lambda)}{l(\rho-\lambda)}}(\bbrn)}\lesssim 2^{\lambda n k \frac{l(1-\rho)}{r(1-\lambda)} }\prod_{j=1}^{l}\Vert f_j\Vert_{L^r(\bbrn)}
 \end{equation*}
 as desired.
 \end{proof}

 Now we prove Propositions \ref{keypropo1} and \ref{keypropo2}. The proofs of the propositions are quite similar to those of Propositions \ref{linearkeypropo1} and \ref{linearkeypropo2} as they are multilinear extensions of the previous ones.
 \begin{proof}[Proof of Proposition \ref{keypropo1}]
We first assume $0<\rho<\frac{r}{2l}$.
By H\"older's inequality and Lemma \ref{Naibopropo},
 \begin{align*}
 \frac{1}{|Q|}\int_Q \big| T_{\sigma_k}\big( f_1^{(0)},\dots,f_l^{(0)}\big)(y)\big|\;dy&\le \frac{1}{|Q|^{\frac{l\rho}{r}}}\big\Vert T_{\sigma_k}(f_1^{(0)},\dots,f_l^{(0)})\big\Vert_{L^{\frac{r}{l\rho}}(\bbrn)}\\
 &\lesssim \frac{1}{\ell(Q)^{\frac{nl\rho  }{r}}}\prod_{j=1}^{l}\Vert f_j\Vert_{L^r(P)}
 \end{align*}
 and we see
 \begin{equation}\label{l2onpest}
\prod_{j=1}^{l}\Vert f_j\Vert_{L^r(P)}=\Big( \int_{P^l} \big| f_1(u_1)\cdots f_l(u_l)\big|^{r}\; du_1\cdots du_l\Big)^{1/r}\lesssim \ell(P)^{\frac{ nl}{r}}\mathbf{M}_r\big(f_1,\dots,f_l\big)(x)
 \end{equation} 
 as $x\in Q\subset P$, where $P^l:=P\times\cdots\times P\subset (\bbrn)^l$.
 Then the first assertion follows.
 
Now we assume $\frac{r}{2l}\le \rho<1$ for the remaining estimate.
 By applying successively H\"older's inequality, Lemma \ref{rhoge12}, and \eqref{l2onpest},
  \begin{align*}
 \frac{1}{|Q|}\int_Q \big| T_{\sigma_k}\big( f_1^{(0)},\dots,f_l^{(0)}\big)(y)\big|\;dy&\le \frac{1}{|Q|^{\frac{l(\rho-\lambda)}{r(1-\lambda)}}}\big\Vert T_{\sigma_k}(f_1^{(0)},\dots,f_l^{(0)})\big\Vert_{L^{\frac{r(1-\lambda)}{l(\rho-\lambda)}}(\bbrn)}\\
 &\lesssim \frac{1}{\ell(Q)^{n \frac{l(\rho-\lambda)}{r(1-\lambda)}}}2^{\lambda n k \frac{l(1-\rho)}{r(1-\lambda)} }\prod_{j=1}^{l}\Vert f_j\Vert_{L^r(P)}\\
 &\lesssim 2^{\lambda n k \frac{l(1-\rho)}{r(1-\lambda)} }\ell(Q)^{-n \frac{l(\rho-\lambda)}{r(1-\lambda)} }\ell(P)^{\frac{nl}{r}}\mathbf{M}_r\big(f_1,\dots,f_l \big)(x)\\
 &=\big( 2^k \ell(Q)\big)^{\lambda n \frac{l(1-\rho)}{r(1-\lambda)} }\Big(\frac{\ell(P)}{\ell(Q)^{\rho}} \Big)^{\frac{nl}{r}}\mathbf{M}_r\big(f_1,\dots,f_l\big)(x).
 \end{align*}
This finishes the proof.
 \end{proof}

  \begin{proof}[Proof of Proposition \ref{keypropo2}]
 
We first consider \eqref{keypropo2_1}. By symmetry, we are only concerned with the cases when $\mu_1=\dots=\mu_{\kappa}=0$, $\mu_{\kappa+1}=\dots=\mu_l=1$ for $\kappa=1,\dots,l-1$
 and when $\mu_1=\dots=\mu_l=1$.
 Let $N>\frac{nl}{r}$, $y\in Q$, and $1\le \kappa\le l-1$.
 Using H\"older's inequality,
 \begin{align*}
 &\big| T_{\sigma_k}\big(f_1^{(0)},\dots,f_{\kappa}^{(0)},f_{\kappa+1}^{(1)},\dots,f_l^{(1)} \big)(y) \big|\\
 &\le \int_{u_1,\dots,u_{\kappa}\in P,u_{\kappa+1},\dots,u_l\in P^c}\big|K_k(y,y-u_1,\dots,y-u_l) \big| \big| f_1(u_1)\cdots f_l(u_l)\big|\; du_1\cdots du_l\\
 &\le \bigg\Vert \ell(P)^{\frac{nl}{r}}\bigg( \frac{|u_{\kappa+1}|+\dots+|u_l|}{\ell(P)}\bigg)^N K_k(y,u_1,\dots,u_l)\bigg\Vert_{ L^{r'}(u_1,\dots,u_l \in \bbrn)}\\
 &\q \times \bigg\Vert  \ell(P)^{-\frac{nl}{r}}\bigg( \frac{|y-u_{\kappa+1}|+\dots+|y-u_l|}{\ell(P)}\bigg)^{-N} f_1(u_1)\cdots f_l(u_l) \bigg\Vert_{L^r(u_1,\dots,u_{\kappa}\in P,u_{\kappa+1},\dots,u_l\in P^c)}.
 \end{align*}
 The first term is equal to
 $$\ell(P)^{-(N-\frac{nl}{r})}\Big\Vert \big(|u_{\kappa+1}|+\dots+|u_l|\big)^NK_k(y,u_1,\dots,u_l) \Big\Vert_{ L^{r'}(u_1,\dots,u_l \in \bbrn)}$$
 and this is clearly less than
 $$\ell(P)^{-(N-\frac{nl}{r})}2^{-k\rho N} \bigg\Vert \Big(\prod_{j=\kappa+1}^{l}\big( 1+2^{k\rho}|u_{j}|\big)^N\Big) K_k(y,u_1,\dots,u_l) \bigg\Vert_{ L^{r'}(u_1,\dots,u_l \in \bbrn)}.$$
 Then Lemma \ref{kernelest} yields that the preceding expression is bounded by a constant times
 \begin{equation*}
 \ell(P)^{-(N-\frac{nl}{r})}2^{-k\rho N}2^{k(m+\frac{nl}{r})}=\big(2^{k\rho}\ell(P)\big)^{-(N-\frac{nl}{r})},
 \end{equation*}
which proves
\begin{equation}\label{lpvnkkest}
\bigg\Vert \ell(P)^{\frac{nl}{r}}\bigg( \frac{|u_{\kappa+1}|+\dots+|u_l|}{\ell(P)}\bigg)^N K_k(y,u_1,\dots,u_l)\bigg\Vert_{ L^{r'}(u_1,\dots,u_l \in \bbrn)}\lesssim \big(2^{k\rho}\ell(P)\big)^{-(N-\frac{nl}{r})}.
\end{equation}
For the remaining term, we note that for $x,y\in Q$, $u_1,\dots,u_{\kappa}\in P$, and $u_{\kappa+1},\dots,u_{l}\in P^c$,
\begin{align*}
|y-u_{\kappa+1}|+\dots+|y-u_{l}| &\ge |x-u_{\kappa+1}|+\dots+|x-u_l|-l|x-y|\\
&\gtrsim_{l,n}  |x-u_{\kappa+1}|+\dots+|x-u_l| \\
&\gtrsim_{l,n} \ell(P)+|x-u_1|+\dots+|x-u_l|
\end{align*}
 and thus for $N>\frac{nl}{r}$
 \begin{align}\label{lpyvnest}
 &\bigg\Vert  \ell(P)^{-\frac{nl}{r}}\bigg( \frac{|y-u_{\kappa+1}|+\dots+|y-u_l|}{\ell(P)}\bigg)^{-N} f_1(u_1)\cdots f_l(u_l) \bigg\Vert_{L^r(u_1,\dots,u_{\kappa}\in P,u_{\kappa+1},\dots,u_l\in P^c)}\nonumber\\
 &\lesssim \bigg( \int_{(\bbrn)^l}\frac{1}{\ell(P)^{nl}}\frac{1}{\big(1+\frac{|x-u_1|+\dots+|x-u_l|}{\ell(P)} \big)^{rN}} \big| f_1(u_1)\cdots f_l(u_l)\big|^r      \; du_1\cdots du_l\bigg)^{1/r}\nonumber\\
 &\lesssim \mathbf{M}_r\big(f_1,\dots,f_l\big)(x).
 \end{align}
As a result, we have
\begin{equation*}
 \big| T_{\sigma_k}\big(f_1^{(0)},\dots,f_{\kappa}^{(0)},f_{\kappa+1}^{(1)},\dots,f_l^{(1)} \big)(y) \big| \lesssim \big(2^{k\rho}\ell(P)\big)^{-(N-\frac{nl}{r})}\mathbf{M}_r\big(f_1,\dots,f_l\big)(x),
 \end{equation*}
as claimed.

Similarly, we can also prove that
 \begin{align*}
 &\big| T_{\sigma_k}\big(f_1^{(1)},\dots,f_l^{(1)} \big)(y) \big|\nonumber\\
 &\le \int_{u_1,\dots,u_l\in P^c}\big|K_k(y,y-u_1,\dots,y-u_l) \big| \big| f_1(u_1)\cdots f_l(u_l)\big|\; du_1\cdots du_l\nonumber\\
 &\le \bigg\Vert \ell(P)^{\frac{nl}{r}}\bigg( \frac{|u_1|+\dots+|u_l|}{\ell(P)}\bigg)^{N} K_k(y,u_1,\dots,u_l)\bigg\Vert_{ L^{r'}(u_1,\dots,u_l \in \bbrn)}\nonumber\\
 &\q \times \bigg\Vert  \ell(P)^{-\frac{nl}{r}}\bigg( \frac{|y-u_1|+\dots+|y-u_l|}{\ell(P)}\bigg)^{-N}f_1(u_1)\cdots f_l(u_l) \bigg\Vert_{L^r(u_1,\dots, u_l\in P^c)}\nonumber\\
 &\lesssim \big( 2^{k\rho}\ell(P)\big)^{-(N-\frac{nl}{r})}\mathbf{M}_r\big(f_1,\dots,f_l\big)(x).
 \end{align*}
  This completes the proof of \eqref{keypropo2_1}.

  We next consider \eqref{keypropo2_2}.
  This will be proved in a way similar to \eqref{keypropo2_1}, using
  $$\Gamma_k(y,x,u_1,\dots,u_l):=K_k(y,y-u_1,\dots,y-u_l)-K_k(x,x-u_1,\dots,x-u_l),$$
  instead of  $K_k(y,y-u_1,\dots,y-u_l)$.
   Let $x,y\in Q$ and $N>\frac{nl}{r}$. 
 We will only consider the case when at least one of $\mu_j$ is $0$, as the case when $\mu_1=\dots=\mu_l=1$ can be verified in the exactly same manner.
 Permuting the variables, without loss of generality, we may assume
 $\mu_1=\dots=\mu_{\kappa}=0$, $\mu_{\kappa+1}=\dots=\mu_l=1$ for $1\le \kappa\le l-1$. 
 Using H\"older's inequality, we have
 \begin{align*}
 &\big| T_{\sigma_k}\big(f_1^{(0)},\dots,f_{\kappa}^{(0)},f_{\kappa+1}^{(1)},\dots,f_l^{(1)}\big)(y)-T_{\sigma_k}\big(f_1^{(0)},\dots,f_{\kappa}^{(0)},f_{\kappa+1}^{(1)},\dots,f_l^{(1)}\big)(x)\big|\\
 &\le \int_{u_1,\dots,u_{\kappa}\in P, u_{\kappa+1},\dots,u_l\in P^c}\big| \Gamma_k(y,x,u_1,\dots,u_l)\big|\big| f_1(u_1)\cdots f_l(u_l)\big|\; du_1\cdots du_l\\
 &\le \bigg\Vert  \ell(P)^{\frac{nl}{r}}\Big( \frac{|y-u_{\kappa+1}|+\dots+|y-u_{l}|}{\ell(P)}\Big)^N\Gamma_k(y,x,u_1,\dots,u_l)\bigg\Vert_{L^{r'}(u_1,\dots, u_{\kappa}\in\bbrn, u_{\kappa+1},\dots,u_l\in P^c)}\\
&\q \times \bigg\Vert \ell(P)^{-\frac{nl}{r}}\Big( \frac{|y-u_{\kappa+1}|+\dots+|y-u_l|}{\ell(P)}\Big)^{-N}f_1(u_1)\cdots f_l(u_l)\bigg\Vert_{L^r(u_1,\dots,u_{\kappa}\in P,u_{\kappa+1},\dots,u_l\in P^c)}.
 \end{align*}
  Analogous to \eqref{lpvnkkest}, we claim that for $x,y\in Q$
 \begin{align}\label{lpnlrest}
 &\bigg\Vert  \ell(P)^{\frac{nl}{r}}\Big( \frac{|y-u_{\kappa+1}|+\dots+|y-u_l|}{\ell(P)}\Big)^N \Gamma_k(y,x,u_1,\dots,u_l)\bigg\Vert_{L^{r'}(u_1,\dots,u_{\kappa}\in\bbrn, u_{\kappa+1},\dots,u_l\in P^c)}\nonumber\\
 &\lesssim 2^k\ell(Q) \big( 2^{k\rho}\ell(P)\big)^{-(N-\frac{nl}{r})}.
 \end{align}
 Indeed, the left-hand side of \eqref{lpnlrest} is bounded by a constant multiple of
 \begin{align*}
 & \bigg\Vert \ell(Q) \ell(P)^{\frac{nl}{r}} \Big( \frac{|y-u_{\kappa+1}|+\dots+|y-u_l|}{\ell(P)}\Big)^N  \\
 &\q\q\q\times \int_0^1 \big| \nabla K_k\big(y(t),y(t)-u_1,\dots,y(t)-u_l \big)\big|\; dt      \bigg\Vert_{L^{r'}(u_1,\dots,u_{\kappa}\in \bbrn,u_{\kappa+1},\dots,u_l\in P^c)}\nonumber\\
 &\lesssim \ell(Q)\ell(P)^{-(N-\frac{nl}{r})}\int_0^1 \Big\Vert   \big(|y(t)-u_{\kappa+1}|+\dots+|y(t)-u_l|   \big)^N   \\
 &\qq\qq\qq\qq\qq\qq\times \big|\nabla K_k\big(y(t),y(t)-u_1,\dots, y(t)-u_l\big) \big|      \Big\Vert_{ L^{r'}(u_1,\dots,u_l \in \bbrn)} \; dt\nonumber\\
 \end{align*}
 where $y(t):=ty+(1-t)x\in Q$ so that $|y-v|\lesssim |y(t)-v|$ for $v\in P^c$.
 By performing a change of variables and applying  Lemma \ref{kernelest}, the $L^{r'}$-norm in the preceding expression is smaller than
 \begin{align*}
 &2^{-k\rho N} \bigg\Vert  \Big(\prod_{j=\kappa+1}^{l} \big(1+2^{k\rho}|u_{j}|\big)^N \Big)\Big|\nabla K_k\big(y(t),u_1,\dots, u_l\big) \Big|      \bigg\Vert_{ L^{r'}(u_1,\dots,u_l \in \bbrn)} \\
 &\lesssim   2^{-k\rho N}2^{k(1+m+\frac{nl}{r})}      =2^k 2^{-k\rho(N-\frac{nl}{r})}.
 \end{align*} 
 Combining all together, we obtain \eqref{lpnlrest}.
 Since \eqref{lpyvnest} can be used for the remaining term without any modification, we finally obtain
\begin{align*}
&\big| T_{\sigma_k}\big(f_1^{(0)},\dots,f_{\kappa}^{(0)},f_{\kappa+1}^{(1)},\dots,f_l^{(1)}\big)(y)-T_{\sigma_k}\big(f_1^{(0)},\dots,f_{\kappa}^{(0)},f_{\kappa+1}^{(1)},\dots,f_l^{(1)}\big)(x)\big|\\
&\lesssim  2^k\ell(Q) \big( 2^{k\rho}\ell(P)\big)^{-(N-\frac{nl}{r})}\mathbf{M}_r\big(f_1,\dots,f_l\big)(x).
\end{align*}
This completes the proof.
 \end{proof}

\hfill

 \section{Proof of Theorem \ref{mainpointest}}\label{thm14pf}

Let $1< r\le 2 \le l<\infty$. We fix a cube $Q$ and a point $x\in Q$. Then matters reduce to proving
\begin{equation}\label{mainestimate0}
\Big(\frac{1}{|Q|}\int_Q  \big|T_{\sigma}\big(f_1,\dots,f_l\big)(y)\big|^{\frac{r}{l}} \; dy \Big)^{\frac{l}{r}}\lesssim \mathbf{M}_r\big(f_1,\dots,f_l \big)(x) \q \text{ if }~\ell(Q)\ge 1
\end{equation}
and
\begin{equation}\label{mainestimate}
\inf_{c_Q\in\mathbb{C}}\Big(\frac{1}{|Q|}\int_Q  \big|T_{\sigma}\big(f_1,\dots,f_l\big)(y)-c_Q\big|^{\frac{r}{l}} \; dy \Big)^{\frac{l}{r}}\lesssim \mathbf{M}_r\big(f_1,\dots,f_l \big)(x) \q \text{ if }~ \ell(Q)<1
\end{equation}
 uniformly in $Q$ and $x$.

\subsection{Proof of \eqref{mainestimate0}} 
 Assume $\ell(Q)\ge 1$.
 We decompose
  \begin{equation}\label{dildecfg}
 f_j=f_j\chi_{Q^{**}}+f_j\chi_{(Q^{**})^c}=:\mathbf{f}_j^{(0)}+\mathbf{f}_j^{(1)} \q \text{ for }~j=1,\dots,l
 \end{equation}
 where $Q^{**}$ denotes the dilate of $Q$ by a factor of $10\sqrt{n} l$.
Then $T_{\sigma}(f_1,\dots,f_l)$ can be written as
 \begin{align*}
 T_{\sigma}(f_1,\dots,f_l)=\sum_{\mu_1,\dots,\mu_l\in \{0,1\}}T_{\sigma}\big(\mathbf{f}_1^{(\mu_1)},\dots,\mathbf{f}_l^{(\mu_l)} \big).
 \end{align*}
 
 We first estimate the term associated with $T_{\sigma}\big(\mathbf{f}_1^{(0)},\dots,\mathbf{f}_l^{(0)} \big)$.
 By applying the $L^r\times\cdots\times L^r\to L^{r/l}$ boundedness for $T_{\sigma}$ in Proposition \ref{multilineargerho} with 
 $$m=-\frac{nl}{r}(1-\rho)<-n(1-\rho)\Big(\frac{l}{r}-\frac{1}{2} \Big)=m_{\rho}(r,\dots,r),$$ we obtain
 \begin{align*}
 \Big(\frac{1}{|Q|}\int_Q \big|T_{\sigma}\big(\mathbf{f}_1^{(0)},\dots,\mathbf{f}_l^{(0)}\big)(y) \big|^{\frac{r}{l}} \; dy \Big)^{\frac{l}{r}}&\le \frac{1}{|Q|^{\frac{l}{r}}}\big\Vert T_{\sigma}\big(\mathbf{f}^{(0)}_1,\dots,\mathbf{f}_l^{(0)}\big)\big\Vert_{L^{\frac{r}{l}}(\bbrn)}\\
 &\lesssim \frac{1}{\ell(Q)^{\frac{nl}{r}}}\prod_{j=1}^{l}\Vert f_j\Vert_{L^r(Q^{**})}
 \end{align*}
 and this is obviously controlled by $\mathbf{M}_r\big(f_1,\dots,f_l\big)(x)$ by using the argument in \eqref{l2onpest} with $P=Q^{**}$.

 For the remaining $2^l-1$ terms, we employ the decomposition \eqref{sigmadecomp} and the inequality \eqref{keypropo2_1}.
 Indeed, assuming that
$(\mu_1,\dots,\mu_l)\in \{0,1\}^l\setminus \{(0,\dots,0)\}$,
 for $y\in Q$ and $N>\frac{nl}{r}$,
 \begin{align*}
 \Big| T_{\sigma}\big(\mathbf{f}_1^{(\mu_1)},\dots,\mathbf{f}_l^{(\mu_l)} \big)(y)\Big|&\le \sum_{k=0}^{\infty}\Big| T_{\sigma_k}\big(\mathbf{f}_1^{(\mu_1)},\dots,\mathbf{f}_l^{(\mu_l)} \big)(y)\Big|\\
 &\lesssim \sum_{k=0}^{\infty} \big(2^{k\rho}\ell(Q)\big)^{-(N-\frac{nl}{r})}\mathbf{M}_r\big(f_1,\dots,f_l\big)(x)\\
 &\sim \ell(Q)^{-(N-\frac{nl}{r})}\mathbf{M}_r\big(f_1,\dots,f_l\big)(x)\le \mathbf{M}_r\big(f_1,\dots,f_l\big)(x).
 \end{align*} 
 This completes the proof of \eqref{mainestimate0}.

 \subsection{Proof of \eqref{mainestimate}}
 Assume $\ell(Q)<1$. As in the linear case in Section \ref{linearproof}, we will treat  separately the two cases $0< \rho<\frac{r}{2l}$ and $\frac{r}{2l}\le \rho<1$.\\
 
 {\bf The case when $0< \rho<\frac{r}{2l}$}.\\
Let $Q^*_{\rho}$ be the concentric dilate of $Q$ with $\ell(Q_{\rho}^*)=10\sqrt{n} l \ell(Q)^{\rho}$.
We write
\begin{equation}\label{decomfqrho}
f_j=f_j\chi_{Q_{\rho}^*}+f_j\chi_{(Q_{\rho}^*)^c}=:\mathtt{f}_j^{(0)}+\mathtt{f}_j^{(1)} \q \text{ for }~j=1,\dots,l.
\end{equation}
To establish \eqref{mainestimate}, we need to estimate
\begin{equation*}
 \mathcal{I}_1:=\Big(\frac{1}{|Q|}\int_Q \big| T_{\sigma}\big( \mathtt{f}_1^{(0)},\dots, \mathtt{f}_l^{(0)}\big)(y)\big|^{\frac{r}{l}}\; dy\Big)^{\frac{l}{r}}
\end{equation*}
and
\begin{equation*}
\mathcal{I}_2:=\inf_{c_Q\in\mathbb{C}}\Big(\frac{1}{|Q|}\int_Q  \Big|\Big(T_{\sigma}\big(f_1,\dots,f_l\big)(y)-T_{\sigma}\big( \mathtt{f}_1^{(0)},\dots,\mathtt{f}_l^{(0)}\big)(y)\Big)-c_Q\Big|^{\frac{r}{l}} \; dy\Big)^{\frac{l}{r}}.
\end{equation*}
Using successively H\"older's inequality, Lemma \ref{Naibopropo}, and the inequality \eqref{l2onpest}, we obtain
 \begin{align*}
 \mathcal{I}_1&\le \frac{1}{|Q|^{\frac{l\rho}{r}}}\big\Vert    T_{\sigma}\big( \mathtt{f}_1^{(0)},\dots,\mathtt{f}_l^{(0)}\big)  \big\Vert_{L^{\frac{r}{l\rho}}(\bbrn)}\lesssim \frac{1}{\ell(Q)^{\frac{nl\rho }{r}}}\prod_{j=1}^{l}\Vert f_j\Vert_{L^r(Q_{\rho}^*)}\lesssim \mathbf{M}_r\big(f_1,\dots,f_l \big)(x),
 \end{align*}
 as desired.
 For the term $\mathcal{I}_2$, we utilize the decomposition \eqref{sigmadecomp} and take
  \begin{equation*}
 c_Q=\sum_{k:2^k\ell(Q)<1}\Big( T_{\sigma_k}\big(f_1,\dots,f_l\big)(x)-T_{\sigma_k}\big(\mathtt{f}_1^{(0)},\dots,\mathtt{f}_l^{(0)}\big)(x)\Big)
 \end{equation*}
to deduce
  \begin{align*}
& \Big|\Big(T_{\sigma}\big(f_1,\dots,f_l\big)(y)-T_{\sigma}\big( \mathtt{f}_1^{(0)},\dots,f_l^{(0)}\big)(y)\Big)-c_Q\Big|\\
 &\le \sum_{k:2^k\ell(Q)\ge 1}\Big|  T_{\sigma_k}\big(f_1,\dots,f_l\big)(y)-T_{\sigma_k}\big(\mathtt{f}_1^{(0)},\dots,\mathtt{f}_l^{(0)}\big)(y)   \Big|\\
 &\q +\sum_{ \substack{\mu_1,\dots,\mu_l\in \{0,1\}\\ (\mu_1,\dots,\mu_l)\not= (0,\dots,0) }    }\sum_{k:2^k\ell(Q)<1}\Big| T_{\sigma_k}\big( \mathtt{f}_1^{(\mu_1)},\dots,\mathtt{f}_l^{(\mu_l)}\big)(y) -T_{\sigma_k}\big( \mathtt{f}_1^{(\mu_1)},\dots,\mathtt{f}_l^{(\mu_l)}\big)(x)    \Big|\\
 &=:\mathscr{I}_1+\mathscr{I}_2.
 \end{align*}
Here, we used the fact that
\begin{equation}\label{tsikfgdecomp}
 T_{\sigma_k}\big(f_1,\dots,f_l\big)-T_{\sigma_k}\big( \mathtt{f}_1^{(0)},\dots,\mathtt{f}_l^{(0)}\big)=\sum_{ \substack{\mu_1,\dots,\mu_l\in \{0,1\}\\ (\mu_1,\dots,\mu_l)\not= (0,\dots,0) }    }T_{\sigma_k}\big( \mathtt{f}_1^{(\mu_1)},\dots,\mathtt{f}_l^{(\mu_l)}\big).
 \end{equation}
 We first claim that
 \begin{equation}\label{sk2klqge1}
 \mathscr{I}_1 \lesssim \mathbf{M}_r\big(f_1,\dots,f_l \big)(x).
 \end{equation}
 To verify this, using \eqref{tsikfgdecomp} again, 
 let us bound the summand in $\mathscr{I}_1$ by
 $$\sum_{ \substack{\mu_1,\dots,\mu_l\in \{0,1\}\\ (\mu_1,\dots,\mu_l)\not= (0,\dots,0) }    }  \Big| T_{\sigma_k}\big( \mathtt{f}_1^{(\mu_1)},\dots,\mathtt{f}_l^{(\mu_l)}\big)(y)\Big|  $$
 and this can be actually controlled by
 $$\big( 2^k\ell(Q)\big)^{-\rho(N-\frac{nl}{r})}\mathbf{M}_r\big(f_1,\dots,f_l \big)(x)$$
 for $N>\frac{nl}{r}$,  in view of \eqref{keypropo2_1}.
 Since $\rho>0$, by taking the sum over $k$ with $2^k\ell(Q)\ge 1$, we can prove \eqref{sk2klqge1}.
For the term $ \mathscr{I}_2$, we choose 
 $$\frac{nl}{r}<N<\frac{nl}{r}+\frac{1}{\rho}$$
 and then apply \eqref{keypropo2_2} to obtain
 \begin{equation}\label{sk2klqge2}
 \mathscr{I}_2\lesssim \sum_{k:2^k\ell(Q)<1}  \big( 2^k\ell(Q)\big)^{1-\rho(N-\frac{nl}{r})} \mathbf{M}_r\big(f_1,\dots,f_l\big)(x) \sim \mathbf{M}_r\big(f_1,\dots,f_l\big)(x)  
 \end{equation}
 as $1-\rho(N-\frac{nl}{r})>0$.
Now it follows from \eqref{sk2klqge1} and \eqref{sk2klqge2} that
 $$\mathcal{I}_2\lesssim \mathbf{M}_r\big(f_1,\dots,f_l\big)(x),$$
 which completes the proof of \eqref{mainestimate} for $0< \rho<\frac{r}{2l}$.\\

   {\bf The case when $\frac{r}{2l}\le \rho<1$}.\\
Similar to the linear case in Section \ref{linearproof},  the proof of \eqref{mainestimate} for $\frac{r}{2l}\le \rho<1$ will be divided into three parts based on the range of $k$.
It will suffice to estiamte
 \begin{align*}
 \mathcal{J}_1&:=\inf_{c_Q\in \bbc}\Big(\frac{1}{|Q|}\int_Q \Big| \sum_{k:2^k\ell(Q)< 1}T_{\sigma_k}\big(f_1,\dots,f_l\big)(y)-c_{Q}\Big|^{\frac{r}{l}} \; dy\Big)^{\frac{l}{r}},\\
\mathcal{J}_2&:=\Big(\sum_{\substack{k:2^k\ell(Q)\ge 1,\\~ 2^{\rho k       }\ell(Q)< 1}}\frac{1}{|Q|}\int_Q \big| T_{\sigma_k}\big( f_1,\dots,f_l\big)(y)\big|^{\frac{r}{l}}\;dy\Big)^{\frac{l}{r}},\\
 \mathcal{J}_3&:= \Big(\sum_{k:2^{\rho k}\ell(Q)\ge 1}\frac{1}{|Q|}\int_Q \big| T_{\sigma_k}\big( f_1,\dots,f_l\big)(y)\big|^{\frac{r}{l}}\;dy\Big)^{\frac{l}{r}}
 \end{align*}
where we note that $\frac{r}{l} \le 1$.
Then \eqref{mainestimate} follows from the estimates
\begin{equation}\label{iiestm2}
\mathcal{J}_{\mathrm{i}}\lesssim \mathbf{M}_r\big(f_1,\dots,f_l\big)(x),\q \mathrm{i}=1,2,3.
\end{equation}
To achieve this, we choose a number $\lambda$ such that
 $$\frac{2\rho l-r}{2l-r}<\lambda<\rho,$$ which allows us to employ Lemma \ref{rhoge12}.

We begin with $\mathcal{J}_1$. 
For this one, we use the decomposition \eqref{decomfqrho} again, recalling $Q_{\rho}^*$ is the concentric dilate of $Q$ whose side-length is $10\sqrt{n} l \ell(Q)^{\rho}$. 
Then we write
 \begin{align*}
T_{\sigma_k}(f_1,\dots,f_l)&=T_{\sigma_k}\big(\mathtt{f}_1^{(0)},\dots,\mathtt{f}_l^{(0)}\big)+\sum_{ \substack{\mu_1,\dots,\mu_l\in \{0,1\}\\ (\mu_1,\dots,\mu_l)\not= (0,\dots,0) }    }T_{\sigma_k}\big(\mathtt{f}_1^{(\mu_1)},\dots,\mathtt{f}_l^{(\mu_l)} \big)\\
&=:\mathscr{F}_k^{(1)}+\mathscr{F}_k^{(2)}.
 \end{align*}
Setting
 $$c_Q=\sum_{k:2^k\ell(Q)<1}\mathscr{F}_k^{(2)}(x),$$
 the term $ \mathcal{J}_1$ can be controlled by
 \begin{equation*}
  \bigg[\sum_{k:2^k\ell(Q)<1}\Big(\frac{1}{|Q|}\int_Q \big| \mathscr{F}_k^{(1)}(y)\big|^{\frac{r}{l}}\; dy+\frac{1}{|Q|}\int_Q \big|\mathscr{F}_k^{(2)}(y)-\mathscr{F}_k^{(2)}(x) \big|^{\frac{r}{l}}\;dy\Big)\bigg]^{\frac{l}{r}}.
 \end{equation*}
 First of all, Proposition \ref{keypropo1} (2) yields that
\begin{equation*}
\Big(\frac{1}{|Q|}\int_Q \big| \mathscr{F}_k^{(1)}(y)\big|^{\frac{r}{l}}\; dy\Big)^{\frac{l}{r}}\le \frac{1}{|Q|}\int_Q \big| \mathscr{F}_k^{(1)}(y)\big|\; dy\lesssim \big(2^k\ell(Q)\big)^{\lambda n \frac{l(1-\rho)}{r(1-\lambda)} }\mathbf{M}_r\big(f_1,\dots,f_l\big)(x).
\end{equation*}
Moreover, using \eqref{keypropo2_2}, for $y\in Q$
 \begin{equation*}
 \big|\mathscr{F}_k^{(2)}(y)-\mathscr{F}_k^{(2)}(x) \big|\lesssim \big(2^k\ell(Q) \big)^{1-\rho(N-\frac{nl}{r})}\mathbf{M}_r\big(f_1,\dots,f_l \big)(x)
 \end{equation*}
 where we choose $\frac{nl}{r}<N<\frac{nl}{r}+\frac{1}{\rho}$. This proves \eqref{iiestm2} for $\mathrm{i}=1$.\\

 For the term $\mathcal{J}_2$, assume that
 \begin{equation*}
 1\le 2^k\ell(Q) \q \text{ and }\q 2^{\rho k}\ell(Q)<1.
 \end{equation*}
  We choose a positive number $\epsilon$ such that
 \begin{equation*}
 \lambda \Big(\frac{1-\rho}{1-\lambda}\Big)<\epsilon<\rho,
 \end{equation*}
 which is possible because $\lambda<\rho$.
 Now let $Q_{\epsilon,k}^{*,\rho}$ be the concentric dilate of $Q$ with $\ell(Q_{\epsilon,k}^{*,\rho})=10\sqrt{n} l\ell(Q)^{\rho} \big(2^{k}\ell(Q) \big)^{-\epsilon}$. 
 Similar to \eqref{10sqnlqlelqrho}, it can be easily verified that
 $$10 \sqrt{n} l \ell(Q)\le \ell(Q_{\epsilon,k}^{*,\rho}).$$
Then we divide
 $$f_j=f_j\chi_{Q_{\epsilon,k}^{*,\rho}}+f_j\chi_{(Q_{\epsilon,k}^{*,\rho})^c}=:f_{j,\rho,k}^{(0)}+f_{j,\rho,k}^{(1)} \q \text{ for }~j=1,\dots,l$$
  so that $T_{\sigma_k}(f_1,\dots,f_l)$ can be written as
  \begin{align*}
  T_{\sigma_k}(f_1,\dots,f_l)&=T_{\sigma_k}\big(f_{1,\rho,k}^{(0)},\dots,f_{l,\rho,k}^{(0)}\big)+   \sum_{ \substack{\mu_1,\dots,\mu_l\in \{0,1\}\\ (\mu_1,\dots,\mu_l)\not= (0,\dots,0) }    }T_{\sigma_k}\big(f_{1,\rho,k}^{(\mu_1)},\dots,f_{l,\rho,k}^{(\mu_l)}\big)\\
  &=: \mathfrak{F}_k^{(1)}+\mathfrak{F}_k^{(2)}.
  \end{align*}
 Proposition \ref{keypropo1} (2) first yields that
\begin{align*}
\frac{1}{|Q|}\int_Q \big| \mathfrak{F}_k^{(1)}(y)\big|\; dy &\lesssim \big( 2^{ k}\ell(Q)\big)^{ \lambda n \frac{l(1-\rho)}{r(1-\lambda)}    }\big(2^k\ell(Q) \big)^{-\frac{\epsilon n l}{r}}\mathbf{M}_r\big(f_1,\dots,f_l \big)(x)\\
&=\big(2^k\ell(Q) \big)^{-\frac{nl}{r}(\epsilon-\lambda(\frac{1-\rho}{1-\lambda}))}\mathbf{M}_r\big(f_1,\dots,f_l \big)(x).
\end{align*}
Furthermore, using \eqref{keypropo2_1}, we have
\begin{align*}
\big| \mathfrak{F}_k^{(2)}(y)\big|& \lesssim \Big( 2^{k\rho}\ell(Q)^{\rho}\big( 2^k\ell(Q)\big)^{-\epsilon}    \Big)^{-(N-\frac{nl}{r})}\mathbf{M}_r\big(f_1,\dots,f_l \big)(x)\\
&= \big(2^k\ell(Q) \big)^{-(\rho-\epsilon)(N-\frac{nl}{r})} \mathbf{M}_r\big(f_1,\dots,f_l \big)(x)
\end{align*} where $N>\frac{nl}{r}$.
These conclude \eqref{iiestm2} for $\ii=2$.\\

Next let us consider the last one $\mathcal{J}_3$. In this case, we will use the decomposition \eqref{dildecfg} to write
 \begin{align*}
 T_{\sigma_k}(f_1,\dots,f_l)&=T_{\sigma_k}\big(\mathbf{f}_1^{(0)},\dots,\mathbf{f}_l^{(0)} \big)+  \sum_{ \substack{\mu_1,\dots,\mu_l\in \{0,1\}\\ (\mu_1,\dots,\mu_l)\not= (0,\dots,0) }    }      T_{\sigma_k}\big(\mathbf{f}_1^{(\mu_1)},\dots,\mathbf{f}_l^{(\mu_l)} \big)            \\
 &=:\mathbf{F}_k^{(1)}+\mathbf{F}_k^{(2)}.
 \end{align*}
We notice that
 $$2^{\frac{kn}{4}(1-\rho)}\sigma_k=2^{-k[m-m_{\rho}(r,\dots,r)+\frac{n}{4}(1-\rho)]}\sigma_k\in \mathbb{M}_lS_{\rho,\rho}^{m_{\rho}(r,\dots,r)-\frac{n}{4}(1-\rho)}(\bbrn)$$
and thus it follows from Proposition \ref{multilineargerho} (with $p_1=\dots=p_l=r$) that
\begin{align*}
 \Big( \frac{1}{|Q|}\int_Q \big| \mathbf{F}_k^{(1)}(y)\big|^{\frac{r}{l}}\; dy\Big)^{\frac{l}{r}}&\le \frac{1}{|Q|^{\frac{l}{r}}} \big\Vert T_{\sigma_k}(\mathbf{f}_1^{(0)},\dots,\mathbf{f}_l^{(0)})\big\Vert_{L^{\frac{r}{l}}(\bbrn)}\\
 &\lesssim \ell(Q)^{-\frac{nl}{r}}2^{-\frac{kn}{4}(1-\rho)}\prod_{j=1}^{l}\Vert f_j\Vert_{L^r(Q^{**})}.
 \end{align*}
 Clearly, 
 \begin{equation*}
\prod_{j=1}^{l}\Vert f_j\Vert_{L^r(Q^{**})} \lesssim \ell(Q)^{\frac{nl}{r}}\mathbf{M}_r\big( f_1,\dots,f_l\big)(x),
 \end{equation*}
 and thus have
 \begin{equation*}
 \Big( \frac{1}{|Q|}\int_Q \big| \mathbf{F}_k^{(1)}(y)\big|^{\frac{r}{l}}\; dy\Big)^{\frac{l}{r}}\lesssim 2^{-\frac{kn}{4}(1-\rho)}\mathbf{M}_r\big(f_1,\dots,f_l\big)(x).
 \end{equation*}
 This proves
 \begin{align*}
\Big(\sum_{k:2^{\rho k}\ell(Q)\ge 1}  \frac{1}{|Q|}\int_Q \big| \mathbf{F}_k^{(1)}(y)\big|^{\frac{r}{l}}\; dy\Big)^{\frac{l}{r}} &\le \Big(\sum_{k=0}^{\infty} \frac{1}{|Q|}\int_Q \big| \mathbf{F}_k^{(1)}(y)\big|^{\frac{r}{l}}\; dy\Big)^{\frac{l}{r}}\\
 &\lesssim \mathbf{M}_r\big(f_1,\dots,f_l\big)(x).
 \end{align*}
Moreover,  \eqref{keypropo2_1} implies
\begin{equation*}
\big| \mathbf{F}_k^{(2)}(y)\big| \lesssim \big( 2^{k\rho} \ell(Q)        \big)^{-(N-\frac{nl}{r})}\mathbf{M}_r\big(f_1,\dots,f_l \big)(x)
\end{equation*}
where $N>\frac{nl}{r}$. 
This  implies
\begin{align*}
&\Big(\sum_{k:2^{\rho k}\ell(Q)\ge 1}  \frac{1}{|Q|}\int_Q \big| \mathbf{F}_k^{(2)}(y)\big|^{\frac{r}{l}}\; dy\Big)^{\frac{l}{r}} \\
&\lesssim  \mathbf{M}_r\big(f_1,\dots,f_l \big)(x) \Big(\sum_{k:2^{\rho k}\ell(Q)\ge 1}\big( 2^{k\rho} \ell(Q)        \big)^{-(N-\frac{nl}{r})\frac{r}{l}}\Big)^{\frac{l}{r}}\\
&\lesssim \mathbf{M}_r\big(f_1,\dots,f_l \big)(x).
\end{align*}
Combining the estimates for both $\mathbf{F}_k^{(1)}$ and $\mathbf{F}_k^{(2)}$, we finally obtain
 $$\mathcal{J}_3\lesssim   \mathbf{M}_r\big(f_1,\dots,f_l \big)(x).$$
This completes the proof of \eqref{mainestimate}.

 \hfill

 \section{Proof of Theorem \ref{mainthm} (from Theorem \ref{mainpointest})}

We first observe that for any $0<t\le 1$,
 \begin{equation}\label{sharpmaxembed}
 \Big(\mathcal{M}^{\sharp}\big(|f|^t\big)(x)\Big)^{\frac{1}{t}}\lesssim \mathcal{M}_t^{\sharp}f(x)
 \end{equation}
as
 \begin{equation*}
 \Big(\frac{1}{|Q|}\int_Q \Big| \big|f(y)\big|^t-|c_Q|^t\Big|\; dy\Big)\le \Big(\frac{1}{|Q|}\int_Q \big| f(y)-c_Q\big|^t\; dy\Big).
 \end{equation*}

Now we point out that if $f_1,\dots,f_l$ are Schwartz functions on $\bbrn$, then
$T_{\sigma}(f_1,\dots,f_l)$ is also a Schwartz function, which can be verified easily by using integration by parts several times.
 Moreover, according to \cite[Theorem 3.6]{Le_Om_Pe_To_Tr2009},
  $$  v_{\vec{w}}\in A_{\frac{lp}{r}}\q     \text{for $(w_1,\dots,w_l)\in A_{\frac{p_1}{r},\dots,\frac{p_l}{r}}$}. $$
 Therefore, for $(w_1,\dots,w_l)\in A_{\frac{p_1}{r},\dots,\frac{p_l}{r}}$, we have
 $$\big| T_{\sigma}(f_1,\dots,f_l)\big|^{\frac{r}{l}}\in L^{\frac{lp}{r}}(v_{\vec{w}}),$$
 which implies that
 $$\mathrm{M}^{dyad}\Big( \big| T_{\sigma}(f_1,\dots,f_l)\big|^{\frac{r}{l}}\Big)\in L^{\frac{lp}{r}}(v_{\vec{w}})$$
 as $\frac{lp}{r}>1$,
 where $\mathrm{M}^{dyad}$ is the dyadic Hardy-Littlewood maximal operator, which is defined by  replacing the supremum in \eqref{hardymax} by the supremum over all dyadic cubes containing $x$, together with setting $r=1$.
 Then \cite[Lemma 7.10]{Duan} yields that
\begin{align*}
 \big\Vert T_{\sigma}(f_1,\dots,f_l)\big\Vert_{L^p(v_{\vec{w}})}&=\Big\Vert \big| T_{\sigma}(f_1,\dots,f_l)\big|^{\frac{r}{l}}\Big\Vert_{L^{\frac{lp}{r}}(v_{\vec{w}})}^{\frac{l}{r}} \\
 &\le \Big\Vert \mathrm{M}^{dyad}\Big( \big| T_{\sigma}(f_1,\dots,f_l)\big|^{\frac{r}{l}}\Big) \Big\Vert_{L^{\frac{lp}{r}}(v_{\vec{w}})}^{\frac{l}{r}}\\
 &\lesssim \Big\Vert \mathcal{M}^{\sharp}\Big( \big| T_{\sigma}(f_1,\dots,f_l)\big|^{\frac{r}{l}}\Big) \Big\Vert_{L^{\frac{lp}{r}}(v_{\vec{w}})}^{\frac{l}{r}}
 \end{align*}
where we used the fact that homogeneous sharp maximal function is less than inhomogeneous sharp maximal one.
In light of \eqref{sharpmaxembed}, the last displayed expression is bounded by
$$\Big\Vert \mathcal{M}^{\sharp}_{r/l} \big( T_{\sigma}(f_1,\dots,f_l)\big)\Big\Vert_{L^p(v_{\vec{w}})}$$
 and then this can be dominated by
$$ \big\Vert   \mathbf{M}_r(f_1,\dots,f_l)\big\Vert_{L^p(v_{\vec{w}})}$$
thanks to Theorem  \ref{mainpointest}. Finally, Theorem \ref{keylemma} proves the above $L^p$ norm is bounded by
 $$\prod_{j=1}^{l}\Vert f_j\Vert_{L^{p_j}(w_j)},$$
 which finishes the proof of Theorem \ref{mainthm}.

 \hfill

 \appendix

 \section{Proof of \eqref{linearckins}}
 
 Let $\alpha,\beta\in (\bbn_0)^n$ be any multi-indices and $k\in  \bbn_0$.
 We first note that
\begin{align*}
&\big| \partial_{x}^{\alpha}\partial_{\xi}^{\beta}\sigma_k(x,\xi)\big|\lesssim  \big( 1+|\xi|\big)^{m-\rho(|\beta|-|\alpha|)}\chi_{ \{1+|\xi|\sim 2^k\}}\sim 2^{k(m-\rho(|\beta|-|\alpha|))}\chi_{ \{1+|\xi|\sim 2^k\}}
\end{align*}
where the constants in the estimates are independent of $k$.
This yields that
 \begin{align*}
 \big| \partial_{x}^{\alpha}\partial_{\xi}^{\beta}c_k(x,\xi)\big| &=2^{\lambda k (|\beta|-|\alpha|)}\big| \partial_{x}^{\alpha}\partial_{\xi}^{\beta}\sigma_k( 2^{-\lambda k}x,2^{\lambda k}\xi)\big|\, \chi_{ \{1+|\xi|\sim 2^{(1-\lambda)k\}}}\\
 &\lesssim 2^{\lambda k (|\beta|-|\alpha|)}  2^{k(m-\rho(|\beta|-|\alpha|))}    \chi_{ \{1+|\xi|\sim 2^{(1-\lambda)k}\}}\\
 &= 2^{k[m-(\rho-\lambda)(|\beta|-|\alpha|)]} \chi_{ \{1+|\xi|\sim 2^{(1-\lambda)k}\}}\\
 &\lesssim \big( 1+|\xi|\big)^{\frac{m}{1-\lambda}-(\frac{\rho-\lambda}{1-\lambda})(|\beta|-|\alpha|)}
 \end{align*}
 uniformly in $k$,
 as
 $$2^k\sim (1+|\xi|)^{\frac{1}{1-\lambda}}.$$
 This proves \eqref{linearckins}.



\end{document}